%% file: main.tex
\documentclass[a4paper,10pt,reqno]{amsart}

\UseRawInputEncoding

\usepackage{qworld}

\usepackage{amsmath}
\usepackage{cases}

\usepackage{amsfonts}
\usepackage[colorlinks,linkcolor=blue,citecolor=blue]{hyperref}
\usepackage{latexsym, amssymb, amsmath, amsthm, bbm}
\usepackage[all]{xy}
\usepackage{pgfplots}
\usepackage{enumerate}

\DeclareSymbolFont{EulerExtension}{U}{euex}{m}{n}
\DeclareMathSymbol{\euintop}{\mathop} {EulerExtension}{"52}
\DeclareMathSymbol{\euointop}{\mathop} {EulerExtension}{"48}

\allowdisplaybreaks[4]

\def \id{\operatorname{id}}

\def \C{\mathcal{C}}

\def \e{\varepsilon}

\def \k{\Bbbk}

\def \C{\mathcal{C}}

\def \e{\varepsilon}

\def \C{\mathcal{C}}

\def \ev{\mathsf{ev}}
\def \coev{\mathsf{coev}}

\def \Vec{\mathsf{Vec}}

\def \1{\mathbf{1}}

\usepackage{mathtools}
\usepackage{stmaryrd}

\def \YD{\mathfrak{YD}}

\def\joinrel{\mathrel{\mkern-4mu}}

\newcommand{\blackrtimes}{\mathop{\raisebox{0.2ex}{${\scriptstyle >\joinrel\blacktriangleleft}$}}}

\newcommand{\btd}{\mathop{\raisebox{0.2ex}{${\scriptstyle \blacktriangleright\joinrel\blacktriangleleft}$}}}

\newcommand{\dotrtimes}
{\mathop{\raisebox{0.2ex}{\makebox[0.86em][l]{${\scriptstyle>\joinrel\lessdot}$}}\raisebox{0.12ex}{$\shortmid$}}}

\numberwithin{equation}{section}

\newtheorem{theorem}{Theorem}[section]
\newtheorem{lemma}[theorem]{Lemma}
\newtheorem{proposition}[theorem]{Proposition}
\newtheorem{corollary}[theorem]{Corollary}
\newtheorem{definition}[theorem]{Definition}

\newtheorem{remark}[theorem]{Remark}

\input{figure-layout.tex}
\begin{document}
\title[Characterization results on generalized smash biproduct Hopf algebras]{Characterization results on generalized smash biproduct Hopf algebras over the partial dual construction}

\author[K. Li]{Kangqiao Li}
\address{School of Mathematics, Hangzhou Normal University, Hangzhou 311121, China}
\email{kqli@hznu.edu.cn}

\thanks{2020 \textit{Mathematics Subject Classification}.
16T05, 16S40, 18M15.}
\keywords{Hopf algebra, Smash biproduct, Matched pair, Braided monodal category, Yetter-Drinfeld module}

\thanks{$\dag$ This work was supported by National Natural Science Foundation of China [grant number 12301049].}


\begin{abstract}
Let $B$ and $D$ be both algebras and coalgebras in a braided monoidal category $\C$.
In the literature, there are equivalent conditions for $(B,D)$ to form a generalized smash biproduct (or cross product) denoted by $B\times D$, which were mainly given by
Bespalov and Drabant in 1999 as well as by Bulacu, Caenepeel and Torrecillas in 2013.
This paper is devoted to constructing another biproduct $D\times B^\ast$ when $B$ has a left dual object $B^\ast$ in $\C$. As results, we show that $B\times D$ is left smash over $D\times B^\ast$, and estabish a specific isomorhism ${}_{B\times D}\YD(\C)^{B\times D}\approx{}_{D\times B^\ast}\YD(\C)^{D\times B^\ast}$ between the categories of the (left-right) Yetter-Drinfeld modules in $\C$.
The constructions and results are provided in three levels: (1) $B\times D$ is an algebra and a coalgebra; (2) $B\times D$ is a bialgebra; (3) $B\times D$ is a Hopf algebra.
\end{abstract}

\maketitle


\section{Introduction}

As a generalization of semidirect products of groups,
the notion of the smash product algebras $A\rtimes H$ was introduced by Heyneman and Sweedler \cite{HS69} in the late 1960s, where $H$ is a Hopf algebra acting on an algebra $A$.
The dual version was given in \cite{Mol77} by Molnar, and it is called the smash coproduct coalgebra $C\blackrtimes H$ when $C$ is a left $H$-comodule coalgebra.
Later in 1985, Radford \cite{Rad85} provided the condition when the smash product and coproduct are compatible to become a bialgebra (resp. Hopf algebra) $B\dotrtimes H$, which holds if and only if $B$ is a bialgebra (resp. Hopf algebra) in the braided monidal category ${}_H^H\YD$ of (left-left) Yetter-Drinfeld modules over $H$ \cite{Yet90}.
The $B\dotrtimes H$ structure is referred to as the Radford biproduct or bosonization, which plays an important role in classifying Hopf algebras with the dual Chevalley property.

However, if the second tensorand is not required to coincide with the Hopf algebra $H$, there was a generalized structure $B\#D$ introduced by Takeuchi \cite{Tak80}, where $B$ is a left $H$-module algebra and $D$ is a left $H$-comodule algebra. The algebra $B\#D$ is also called the $H$-\textit{smash product algebra} in this paper, and the $H$-\textit{smash coproduct coalgebra} could be defined in a dual form.
More generally, the notion was generalized in \cite{VV94} to a datum $(B,D,\phi)$ satisfying that $B$, $D$ are algebras and $B\otimes D$ becomes an algebra with multiplication
$(m_B\otimes m_D)\circ(\id_B\otimes\phi\otimes\id_D)$,
whose dual version is a datum $(B,D,\psi)$ satisfying that
$B$, $D$ are coalgebras and $B\otimes D$ becomes a coalgebra with comultiplication
$(\id_B\otimes\psi\otimes\id_D)\circ(\Delta_B\otimes \Delta_D)$.

It is known that
 all these notions were defined in an arbitrary braided monoidal category $\C$ (see \cite{Bes95,Bes97,BD98,HS20} etc.).
As for
this paper, we concern the situation when a tuple $(B,D,\phi,\psi)$ in $\C$ makes $B\otimes D$ an algebra as well as a coalgebra, a bialgebra, and even a Hopf algebra.
The object $B\otimes D$ with such structures is referred to as the \textit{generalized smash biproduct} (\cite{CIMZ00,BD01}) or cross product (\cite{BD99,BCT13}), denoted by $B\times D$. In \cite{BD99,CIMZ00,BCT13},
they provided a number of conditions for the tuple at different levels, but here we mention the following one as an example.
Roughly speaking, for \textit{a pair $(B,D)$ of algebras and coalgebras} in a braided monoidal category $\C$, it is shown in \cite{BD99} and \cite{BCT13} that the followings are equivalent on the level of bialgebras:
\begin{itemize}
\item[(1)]
There exist two morphisms
$$\phi:D\otimes B\rightarrow B\otimes D
\;\;\;\;\text{and}\;\;\;\;
\psi:B\otimes D\rightarrow D\otimes B$$
such that $(B,D,\phi,\psi)$ is a bialgebra admissible tuple in the sense that $B\otimes D$ becomes a bialgebra with operations twisted by $\phi$ and $\psi$;

\item[(2)]
There exist a bialgebra $H$ and a diagram
$$\xymatrix{
B \ar@<.5ex>[r]^{\iota} & H \ar@<.5ex>@{-->}[l]^{\zeta} \ar@<.5ex>[r]^{\pi}
& D \ar@<.5ex>@{-->}[l]^{\gamma}  }$$
in $\C$, such that
\begin{itemize}
\item
$\iota$, $\gamma$ are algebra morphisms, and $\pi$, $\zeta$ are coalgebra morphisms;

\item
$\zeta\circ\iota=\id_B$, $\pi\circ\gamma=\id_D$ and
$(\iota\circ\zeta)\ast(\gamma\circ\pi)=\id_H$ hold.
\end{itemize}

\item[(3)]
There are actions and coactions
$$B\in{}_D\C,\;\;\;\;B\in{}^D\C,\;\;\;\;
D\in\C{}_B,\;\;\;\;D\in\C{}^B.$$
from $B$ and $D$ to each others which satisfy a list of compatibility conditions.
\end{itemize}

In this paper, we begin everything with the description (3) above, whose detailed contents are collected in Definition \ref{def:matchedpair}.
To clarify the terms, let us call $(B,D)$ a \textit{Hopf matched pair} in $\C$, if it satisfies the requirements in (3). A weaker notion for $(B,D)$ would be called a \textit{matched pair} when $B\times D$ is an algebra and a coalgebra in $\C$.

Please note that neither $B$ nor $D$ is needed to be bialgebras, and there do exist such examples (e.g. bismash product of matched pair of groups \cite{Tak81,Li23} and the quantum doubles \cite{HKL26} when $\C=\Vec$).
Thus the notion pf the generalized smash biproduct $B\times D$ in $\C$ indeed generalizes the bosonizations, which requires that one tensorand is a Hopf algebra in $\C$ and the other is a Yetter-Drinfeld Hopf algebra.
However, the theory of bosonizations (and Nichols algebras) includes an important concept called the reflections of Yetter-Drienfeld modules. It is based on a structure
referred to as the partially dualized Hopf algebra of bosonizations in $\C$,
which was mainly introduced and studied by Heckenberger and Schneider in \cite{HS13} as well as Barvels, Lentner and Schweigert in \cite{BLS15}.
In this paper, our first main result is the construction denoted by $D\times B^\ast$ as follows, which
should also be a generalization of the partially dualized Hopf algebra of bosonizations.

\begin{proposition}
Let $B\times D$ be a generalized smash biproduct (which is an algebra and a coalgebra) in a braided monoidal category $\C$.
Suppose $B$ has left dual object $B^\ast$.
Then:
\begin{itemize}
\item[(1)]
The object $B^\ast$ has certain structures of an algebra and a coalgebra, such that
$D\times B^\ast$ is also a generalized smash biproduct algebra and coalgebra;
\item[(2)]
$B\times D$ is a bialgebra if and only if $D\times B^\ast$ is a bialgebra in $\C$;
\item[(3)]
If $B\times D$ is a bialgebra and $\id_B$, $\id_D$ are both convolution invertible, then $D\times B^\ast$ is a Hopfalgebra in $\C$.
\end{itemize}
\end{proposition}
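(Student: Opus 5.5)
We work throughout with the description (3) of the introduction: a matched pair $(B,D)$ consists of actions and coactions $B\in{}_D\C$, $B\in{}^D\C$, $D\in\C_B$, $D\in\C^B$ subject to the compatibility list of Definition~\ref{def:matchedpair}. The mixing morphisms $\phi$ and $\psi$ are built from these actions and coactions together with the braiding. The strategy is to transport every structure on $B$ to $B^\ast$ by means of $\ev:B^\ast\otimes B\to\1$ and $\coev:\1\to B\otimes B^\ast$. After that, the axioms for $D\times B^\ast$ should follow by ``bending strings'' in the axioms for $B\times D$.

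\textbf{Structures on $B^\ast$.} The algebra structure of $B^\ast$ will be the transpose of the coalgebra structure of $B$, and the coalgebra structure of $B^\ast$ the transpose of the algebra structure of $B$. Using left duality, a morphism $B\to B\otimes B$ transposes to $B^\ast\otimes B^\ast\to B^\ast$; we choose the order so that the transposition is an anti-equivalence reversing tensor factors. This is why $B^\ast$ appears on the right in $D\times B^\ast$.

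The four cross structures are transposed in the same way:
\begin{itemize}
\item the left $D$-action on $B$ becomes a right $D$-action on $B^\ast$, i.e.\ $B^\ast\in\C_D$;
\item the left $D$-coaction on $B$ becomes a right $D$-coaction on $B^\ast$;
\item the right $B$-action on $D$ becomes a left $B^\ast$-coaction on $D$;
\item the right $B$-coaction on $D$ becomes a left $B^\ast$-action on $D$.
\end{itemize}
Thus $D$ now plays the role that $B$ played, and $B^\ast$ plays the role of $D$.

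\textbf{Part (1).} Associativity and unitality of the new actions and coactions follow at once by transposing the corresponding (co)associativity axioms. Each remaining axiom in the matched-pair list for $(D,B^\ast)$ should be the mirror image, under string bending, of one axiom for $(B,D)$; I will check this one condition at a time. The cleanest way to organise the check is to note that $\phi'$ and $\psi'$ for $(D,B^\ast)$ are obtained from $\psi$ and $\phi$ by composing with $\ev$ and $\coev$ on the $B$-strand. For example,
\[
\phi'=(\ev\otimes\id_D\otimes\id_{B^\ast})\circ(\id_{B^\ast}\otimes\psi\otimes\id_{B^\ast})\circ(\id_{B^\ast}\otimes\id_B\otimes\id_D\otimes \ldots)\circ(\id_{B^\ast}\otimes\id_D\otimes\coev),
\]
up to the precise placement of the $\coev$-factor. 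Associativity of the multiplication on $D\otimes B^\ast$ then reduces, via the zig-zag identities and naturality of the braiding, to coassociativity of the comultiplication on $B\otimes D$, and dually. The main obstacle is bookkeeping of braidings when passing duals through $\phi$ and $\psi$; I will do this diagrammatically, using naturality of the braiding with respect to $\ev$ and $\coev$.

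\textbf{Part (2).} The bialgebra compatibility of $B\times D$ is a single identity between morphisms $B\otimes D\otimes B\otimes D\to B\otimes D\otimes B\otimes D$. Applying the partial transposition on the $B$-strands is a bijection on morphism sets, via the duality adjunction $\operatorname{Hom}(X\otimes B,Y)\cong\operatorname{Hom}(X,Y\otimes B^\ast)$ and its variants. I will show that it sends the bialgebra identity for $B\times D$ to that for $D\times B^\ast$, with the factors suitably reordered by the braiding. The unit and counit conditions are handled the same way. Because the transposition is bijective, this gives the ``if and only if''. Alternatively, one can reduce to the list of bialgebra-level compatibility conditions in Definition~\ref{def:matchedpair}, which transpose condition by condition; this route is safer.

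\textbf{Part (3).} Let $S_B$ and $S_D$ be the convolution inverses of $\id_B$ and $\id_D$. Then $S_B^\ast$ is the convolution inverse of $\id_{B^\ast}$, since transposition respects convolution. I will then invoke the known formula for the antipode of a cross-product bialgebra whose factors have invertible identities (from \cite{BD99,BCT13}):
\[
S=\phi\circ(S_D\otimes S_B)\circ c,
\]
up to the appropriate braiding. Its analogue for $D\times B^\ast$ exhibits an explicit antipode. Alternatively, one shows directly that $\id_{D\times B^\ast}=(\text{twisted } \id_D)\ast(\id_{B^\ast})$ in the convolution algebra and inverts each factor.
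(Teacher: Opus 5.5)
Your handling of (3) agrees with the paper: $S_B^\ast$ inverts $\id_{B^\ast}$, and Lemma~\ref{lem:crossprod}(2) is then applied to $(D,B^\ast)$. Your assignment of the four new (co)actions also matches Lemma~\ref{lem:newmodcomod}. The mechanism you propose for (2), however, does not work.

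Bending the $B$-strands is a bijection on Hom-sets, but it is not compatible with composition. Only the $B$-strands reverse direction; $m_D$, $\Delta_D$ and the order in which a $D$-strand meets the (co)actions stay as they were. So neither side of the bialgebra axiom for $B\times D$ bends into a side of the bialgebra axiom for $D\times B^\ast$.

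A concrete case in $\Vec$: write $kS_3=B\times D$ with $B=k\langle(12)\rangle$, $D=k\langle(123)\rangle$, trivial coactions, trivial action on $B$, and $g\triangleleft a=a^{-1}ga$.
\begin{itemize}
\item $\Delta_{B\times D}\circ m_{B\times D}$ sends $(a\otimes g)\otimes(a'\otimes g')$ to $x\otimes x$. Hence every term of the bent map, for any reordering of factors, has two equal $D$-components.
\item On the other side, $\Delta_{D\times B^\ast}(g\otimes\delta_c)=\sum_{c_1c_2=c}(g\otimes\delta_{c_1})\otimes(c_1^{-1}gc_1\otimes\delta_{c_2})$. So already on inputs $x\otimes 1$, both sides of the axiom for $D\times B^\ast$ contain terms such as $(r\otimes\delta_{(12)})\otimes(r^2\otimes\delta_1)$ with $r=(123)$.
\end{itemize}
Your fallback, bending the equations of Definition~\ref{def:matchedpair}(2) one at a time, is only asserted. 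By the same mismatch, you would have to check by hand that a bent equation is one of those for $(D,B^\ast)$, and you give no such check.

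The missing idea is the paper's abstract Doi--Hopf module argument:
\begin{itemize}
\item For a Hopf matched pair, ${}_D\C^B$ is monoidal (Proposition~\ref{prop:DHmodtensorprod}).
\item The isomorphism $\Phi:{}_D\C^B\cong{}_{D\times B^\ast}\C$ commutes with the forgetful functors. It carries this tensor product to the one defined through $\Delta_{D\times B^\ast}$; this is equation~(\ref{eqn:Phimonoidal}), checked separately on $D\times 1$- and $1\times B^\ast$-actions via Lemma~\ref{lem:crossprodalg}.
\item Lemma~\ref{lem:fibertoC} then forces $D\times B^\ast$ to be a bialgebra.
\end{itemize}
This gives the forward implication. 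Your step ``bijective, hence if and only if'' is no longer available, so the converse needs its own mirror-image argument. Since (3) requires $D\times B^\ast$ to be a bialgebra, it inherits this gap.

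In (1), the organising principle you propose is also incorrect: $\phi'$ is not the $B$-transpose of $\psi$. With the structures you assign, $\phi'$ is built from $\Delta_{B^\ast}$, $\Delta_D$, the left $B^\ast$-action on $D$ (the transpose of the right $B$-coaction on $D$), and the right $D$-action on $B^\ast$ (the transpose of the left $D$-action on $B$). So it mixes one action and one coaction of $(B,D)$ and contains $\Delta_D$. The $B$-transpose of $\psi$, by contrast, contains both coactions and $m_D$.

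For the same reason, associativity of $m_{D\times B^\ast}$ cannot be a rewriting of coassociativity of $\Delta_{B\times D}$. The former has three $D$-inputs and one $D$-output. The latter, bent only on $B$-strands, has one $D$-input and three $D$-outputs.

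What survives is the axiom-by-axiom verification, which is exactly what the paper carries out in Proposition~\ref{prop:rightpartialdual1}. Each axiom for $(D,B^\ast)$ involves transposed actions and transposed coactions at once. So the check is done with the mixed identities of Lemma~\ref{lem:munumixed} together with the axioms for $(B,D)$, not by bending a single structure map.
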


This proposition is a collection of Proposition \ref{prop:rightpartialdual1} and Corollary \ref{cor:rightpartialdual2}, and the structure $D\times B^\ast$
would be called the \textit{right partial dual} of the $B\times D$ in $\C$.
In order to prove them, we define and study the categories ${}_D\C^B$ (and ${}^D\C_B$) of \textit{abstract Doi-Hopf modules} over a matched pair $(B,D)$ in Section \ref{section3}. They are analogous to the classical Doi-Hopf modules considered in \cite{Doi92,CMZ97,BT14} etc. However, our notion is defined from a matched pair $(B,D)$ without any Hopf algebras given at first, and the key obsercation is a canonical isomorphism ${}_D\C^B\cong {}_{D\times B^\ast}\C$ of categories.

As the first application (Theorem \ref{thm:cross=smash} and Corollary \ref{cor:cross=smash2}), our construction of the right partial dual $D\times B^\ast$ could help to characterize the original the generalized smash biproduct $B\times D$ as follows:

\begin{theorem}\label{thm1}
Let $(B,D)$ be a matched pair, and let $(B,B^\ast)$ be braided dual pair in a braided monoidal category $\C$. Then:
\begin{itemize}
\item[(1)]
The generalized smash biproduct $B\times D$ is a bialgebra if and only if it is a left smash biproduct bialgebra (over a bialgebra $K$);

\item[(2)]
If $\id_B$ and $\id_D$ are both convolution invertible, then
 $B\times D$ is a left smash biproduct Hopf algebra (over a Hopf algebra $K$).
\end{itemize}
\end{theorem}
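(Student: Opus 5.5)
The plan is to realize $B\times D$ as a Radford-type left smash biproduct $B\dotrtimes K$ with $K:=D\times B^\ast$ the right partial dual. I will use the Proposition above (Proposition \ref{prop:rightpartialdual1} and Corollary \ref{cor:rightpartialdual2}) together with the isomorphism ${}_D\C^B\cong{}_{D\times B^\ast}\C$ of abstract Doi-Hopf modules from Section \ref{section3}. Here "left smash biproduct over $K$" should mean that $B\times D$ carries a left $K$-module algebra/coalgebra structure and a compatible $K$-comodule structure, with $K$ a bialgebra. The precise form of these structures is what I intend to extract from $D\times B^\ast$.

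The "if" direction of (1) is essentially formal. A left smash biproduct bialgebra over a bialgebra $K$ is, by definition (the braided Radford--Majid bosonization argument), a bialgebra once the compatibility conditions hold, so I would only cite it.

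For "only if", assume $B\times D$ is a bialgebra. By part (2) of the Proposition, $K=D\times B^\ast$ is then a bialgebra. The key step is to equip $B\times D$ with the structure needed to be smash over $K$.

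\begin{itemize}
\item \emph{Module structure.} $B\times D$ is a left $D$-module via multiplication. It is a right $B$-comodule via $(\id\otimes\pi)$-type projections of $\Delta_{B\times D}$, or rather via the $B$-coaction built from $\psi$ and $\Delta_B$. I will check that these make $B\times D$ an object of ${}_D\C^B$, and then transport it to a left $K$-module via ${}_D\C^B\cong{}_{K}\C$.
\item \emph{Comodule structure.} Dually, a left $K$-coaction is obtained by coevaluating into $B^\ast$ and using the $D$-component of the comultiplication.
\item \emph{Compatibilities.} I then verify that these make $B\times D$ a $K$-module algebra and coalgebra and a $K$-comodule algebra and coalgebra, i.e.\ a Yetter--Drinfeld-type compatibility, so that the smash product and coproduct over $K$ reproduce exactly $m_{B\times D}$ and $\Delta_{B\times D}$.
\end{itemize}

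Reproducing $m_{B\times D}$ and $\Delta_{B\times D}$ amounts to unfolding the $\ev/\coev$ of $B^\ast$ against $\phi,\psi$ and applying the matched-pair axioms of Definition \ref{def:matchedpair}.

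For (2), if $\id_B,\id_D$ are convolution invertible, part (3) of the Proposition gives that $K$ is a Hopf algebra. The left smash biproduct over a Hopf algebra with the structures above is then a Hopf algebra: an antipode is assembled from the antipode of $K$ and the convolution inverses, or equivalently one shows $\id_{B\times D}$ is convolution invertible as $(\id_B)^{-1}$ twisted by $(\id_D)^{-1}$.

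The main obstacle I expect is the compatibility verification in the third step: showing that the $K$-action and $K$-coaction on $B\times D$ satisfy the braided Yetter--Drinfeld condition over $D\times B^\ast$. This requires pushing $\ev_B$ through $\phi$ and $\psi$ repeatedly. I would first reduce it to separate $D$- and $B^\ast$-generator checks, using that $K$ is generated as an algebra by $D$ and $B^\ast$, and then handle each check via the matched-pair identities.
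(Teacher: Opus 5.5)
Your proposal has a genuine gap: it aims at the wrong structure. In the paper, a left $K$-smash biproduct is the Takeuchi-type $B\#D$ of Definitions~\ref{def:smashprodcoprod} and~\ref{def:smashbiprod}. Its underlying object is $B\otimes D$, and the bialgebra $K$ is external: it never appears as a tensor factor. The two factors also play different roles. $B$ must be an algebra in ${}_K\C$ and a coalgebra in ${}^K\C$, while $D$ must be an algebra in ${}^K\C$ and a coalgebra in ${}_K\C$. Then $m_{B\#D}$ uses the $K$-coaction on $D$ followed by the $K$-action on $B$, and $\Delta_{B\#D}$ uses the $K$-coaction on $B$ followed by the $K$-action on $D$.

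A Radford-type $B\dotrtimes K$ with $K=D\times B^\ast$ lives on $B\otimes D\otimes B^\ast$, so it cannot be $B\times D$. Likewise, if you make all of $B\times D$ a $K$-module (co)algebra and $K$-comodule (co)algebra, no smash construction over $K$ returns a (co)multiplication on $B\otimes D$ itself. Worse, the structures you propose lack the properties you plan to verify. Take $B=\1$ and $D=H$ an ordinary Hopf algebra in $\Vec$ with trivial actions and coactions. Then $B\times D=H$, $B^\ast=\1$ and $K=H$, and your transported action is left multiplication. This is not a module-algebra action, since $h\cdot 1_H=h\neq\e(h)1_H$. Similarly, your coaction is $\Delta_H$, which is not a comodule-coalgebra coaction because $\e_H$ is not colinear. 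So the Yetter--Drinfeld-type verification you flag as the main obstacle is unnecessary, and as formulated it is false.

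What is missing is the splitting across the two factors, via four abstract Doi--Hopf modules: $(B,\mu_B,\Delta_B)$ and $(D,m_D,\nu_D)$ in ${}_D\C^B$, and $(B,m_B,\nu_B)$ and $(D,\mu_D,\Delta_D)$ in ${}^D\C_B$. By Corollary~\ref{cor:algcoalginDHmod}, $B$ is an algebra in ${}_D\C^B$ and a coalgebra in ${}^D\C_B$, and $D$ is an algebra in ${}^D\C_B$ and a coalgebra in ${}_D\C^B$. Transporting along the monoidal isomorphisms $\Phi$ (Proposition~\ref{prop:monoidaliso1}) and $\Psi$ (Lemma~\ref{lem:monoidaliso2}) gives exactly the four structures $p_B$, $q_D$, $q_B$, $p_D$ of Proposition~\ref{prop:Kmodcomodstru}. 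These have precisely the module/comodule (co)algebra properties that Definition~\ref{def:smashbiprod} requires.

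Your ingredient is close. If you put $B\times D$ into ${}_D\C^B$ as the tensor product $(B,\mu_B,\Delta_B)\otimes(D,m_D,\nu_D)$, strict monoidality of $\Phi$ shows its image is just $(B,p_B)\otimes(D,p_D)$ in ${}_K\C$. The point is to use the two factors separately.

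After that, what remains is to check $m_{B\#D}=m_{B\times D}$, $\Delta_{B\#D}=\Delta_{B\times D}$ and the (co)units, as in the proof of Theorem~\ref{thm:cross=smash}. The bialgebra (resp.\ Hopf) property of $B\#D$ is then inherited from $B\times D$ via Lemma~\ref{lem:crossprod}, and $K$ is a bialgebra (resp.\ Hopf algebra) by Corollary~\ref{cor:rightpartialdual2}. Your treatment of the ``if'' direction is fine; it is immediate from Definition~\ref{def:smashbiprod}, with no bosonization argument needed. Your convolution-inverse argument in part (2) is also fine, and amounts to Lemma~\ref{lem:crossprod}(2).
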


We remark that smash biproducts are clearly generalized smash biproducts, and hence Theorem \ref{thm1} states that the converse is also true when $B$ has left dual object in $\C$ (or when the braided monoidal category $\C$ is rigid).

Another consequence (Proposition \ref{prop:YDiso1} and Theorem \ref{thm:YDiso2}) for $D\times B^\ast$ is that we could establish an isomorphism
${}_{B\times D}\YD(\C)^{B\times D}\approx{}_{D\times B^\ast}\YD(\C)^{D\times B^\ast}$ between the categories of left-right Yetter-Drinfeld modules, which is also provided in different levels:

\begin{theorem}\label{thm2}
Let $(B,D)$ be a matched pair, and let $(B,B^\ast)$ is a braided dual pair in a braided monoidal category $\C$. Then there is an isomorphism $\Omega:{}_{B\times D}\YD(\C)^{B\times D}\cong{}_{D\times B^\ast}\YD(\C)^{D\times B^\ast}$ of categories.
Furthermore:
\begin{itemize}
\item[(1)]
If $B\times D$ is a bialgebra, then
$\Omega$ is a lax monoidal functor of prebraided monoidal categories;
\item[(2)]
If $\id_B$ and $\id_D$ are both convolution invertible, then $\Omega$ is an isomorphism of braided monoidal categories.
\end{itemize}
\end{theorem}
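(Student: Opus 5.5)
Theorem~\ref{thm2} will be derived in three layers, matching the three levels of the statement, with all the work routed through the abstract Doi--Hopf modules of Section~\ref{section3}. The first step is to describe a left-right Yetter--Drinfeld module $M$ over $B\times D$ purely in terms of the matched pair. Because $B\times D$ is generated as an algebra by $B\otimes\1$ and $\1\otimes D$ with multiplication twisted by $\phi$, a left $B\times D$-action on $M$ is the same as a left $B$-action and a left $D$-action that are compatible through $\phi$. Dually, since the comultiplication is twisted by $\psi$, a right $B\times D$-coaction is the same as a right $B$-coaction and a right $D$-coaction that are compatible through $\psi$.

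The second step uses the left dual $B^\ast$ with $\ev$ and $\coev$ to exchange $B$-data between sides. A left $B$-action $B\otimes M\to M$ corresponds bijectively to a coaction-type map $M\to B^\ast\otimes M$. The braiding then turns this into a right $B^\ast$-coaction. In the same way, a right $B$-coaction $M\to M\otimes B$ becomes, via $\ev$, a left $B^\ast$-action. Keeping the $D$-action and the $D$-coaction unchanged, $M$ acquires a left $D\times B^\ast$-action and a right $D\times B^\ast$-coaction. This is essentially the isomorphism ${}_D\C^B\cong{}_{D\times B^\ast}\C$ from Section~\ref{section3}, applied once on the module side and once, in its dual form, on the comodule side. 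We set $\Omega(M)$ to be this object and let $\Omega$ act as the identity on morphisms. Its inverse is given by the symmetric recipe, using the zig-zag identities, so $\Omega$ is an isomorphism of categories as soon as it is well defined.

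The main obstacle is checking that the Yetter--Drinfeld compatibility for $B\times D$ translates exactly into the one for $D\times B^\ast$. The compatibility splits into four pieces, indexed by which of $B$ and $D$ is acting and which is coacting. The $D$--$D$ piece is untouched. The two mixed pieces should turn into the $\phi$- and $\psi$-compatibilities of the new structures, since the $\phi,\psi$ of $D\times B^\ast$ in Proposition~\ref{prop:rightpartialdual1} are defined as the partial duals of the old maps. The $B$--$B$ Yetter--Drinfeld condition should become the condition that the $B^\ast$-action and $B^\ast$-coaction form the module/comodule structure over the algebra and coalgebra $B^\ast$. I would verify these diagrammatically, dualising one leg at a time and invoking the construction of the structures on $B^\ast$ in Proposition~\ref{prop:rightpartialdual1}. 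At this level no bialgebra axioms are needed.

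For (1), assume $B\times D$ is a bialgebra, so $D\times B^\ast$ is one by Proposition~\ref{prop:rightpartialdual1}(2). Both categories are then prebraided monoidal, with the tensor product given by the diagonal action and codiagonal coaction. The lax structure maps $\Omega(M)\otimes\Omega(N)\to\Omega(M\otimes N)$ come from the braiding that reorders the dualised $B^\ast$-legs. They are identities up to $c$, and the real task is to check that they intertwine the structures and the YD-braidings; this uses naturality and the hexagon axioms. For (2), Corollary~\ref{cor:rightpartialdual2} makes both objects Hopf algebras. The antipodes should then provide inverses of the comparison maps, or equivalently give both categories their braided structure, which upgrades $\Omega$ to a braided monoidal isomorphism.
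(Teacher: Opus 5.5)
You have the right functor: keep the $D$-action and $D$-coaction, and transpose the $B$-coaction and $B$-action into $B^\ast$-structures. Your remark that the mixed Yetter--Drinfeld pieces make the new action and coaction into $D\times B^\ast$-structures is exactly Lemma~\ref{lem:OmegaMmodcomod}: the $D$-action/$B$-coaction piece is the Doi--Hopf condition, so Proposition~\ref{prop:monoidaliso1} applies. However, the rest of your dictionary is wrong, and with it the plan does not show that $\Omega(M)$ is Yetter--Drinfeld.

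\begin{itemize}
\item The module axioms for the $B^\ast$-action and the comodule axioms for the $B^\ast$-coaction are just transposes of the $B$-comodule and $B$-module axioms (the computation of Lemma~\ref{lem:newmodcomod}). They consume no Yetter--Drinfeld condition. So the $B$--$B$ piece must become the $B^\ast$--$B^\ast$ piece for $D\times B^\ast$.
\item The two mixed Yetter--Drinfeld pieces for $D\times B^\ast$ come from no Yetter--Drinfeld piece for $B\times D$ at all. After transposing the $B^\ast$-legs, they are the $\phi$-twisted commutation of the $B$- and $D$-actions and the $\psi$-twisted compatibility of the $B$- and $D$-coactions. In other words, they express that $M$ is a $B\times D$-module and a $B\times D$-comodule.
\end{itemize}

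So the real mechanism is a swap: module/comodule compatibilities $\leftrightarrow$ mixed Yetter--Drinfeld pieces. In your accounting, the mixed pieces and the $B^\ast$--$B^\ast$ piece of Lemma~\ref{lem:YDrestricted} for $(D,B^\ast)$ have no source. Already for $D=\1$, where the statement reads ${}_B\YD(\C)^B\cong{}_{B^\ast}\YD(\C)^{B^\ast}$, your dictionary never produces the Yetter--Drinfeld condition over $B^\ast$. The same swap is needed for $\Omega^{-1}$.

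The monoidal part has a more serious gap: the comparison maps are not identities up to $c$.

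\begin{itemize}
\item In the paper, $J_{M,N}$ in (\ref{eqn:JMN}) is built from the $B$-action and the $B$-component of the $B\times D$-coaction. It is the $B$-half of the Yetter--Drinfeld braiding; the $D$-half is $I_{M,N}$.
\item $J_{M,N}$ is a morphism of $D\times B^\ast$-modules and comodules only because of the Yetter--Drinfeld compatibility. Its associativity uses (\ref{eqn:YDrestricted2}).
\item Its inverse needs $S_B$. That is exactly why (1) yields only a lax structure and (2) needs convolution invertibility.
\end{itemize}

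Maps assembled from braidings alone are always invertible. With them, (1) would already be strong and no antipode would enter the comparison maps, which is inconsistent with your own appeal to antipodes in (2).

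Compatibility with the prebraidings also does not come from naturality and hexagons. The paper uses $J_{M,N}\circ I_{M,N}=\sigma_{N,M}\circ c_{M,N}$ (from Lemma~\ref{lem:crossprodalg}) together with $\sigma'_{\Omega(M),\Omega(N)}=I_{N,M}\circ c^{-1}_{N,M}\circ J_{M,N}$. From these it deduces $J_{N,M}\circ\sigma'_{\Omega(M),\Omega(N)}=\sigma_{M,N}\circ J_{M,N}$. Without identifying $J$ as this partial braiding, neither (1) nor (2) is established.
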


Here, the category ${}_{B\times D}\YD(\C)^{B\times D}$ makes sense as long as $(B,D)$ is a matched pair (see Definition \ref{def:YDmod}). Please note that in this case $B\times D$ is an algebra and a coalgebra, but not necessarily a bialgebra.
Moreover, the functor $\Omega$ is constructed in a direct way. Therefore, as a generaliation of the analogous results in \cite{HS13} and \cite{BLS15} for the bosonization  $B\dotrtimes H$ and its partial dualized Hopf algebras, our Theorem \ref{thm2} has a proof without the idea of identification ${}_{B\dotrtimes H}^{B\dotrtimes H}\YD(\C)\approx {}^B_B\YD\big({}^H_H\YD(\C)\big)$.

The paper is mainly presented with graph calculus, and it is organized as follows:
Section \ref{section2} is devoted to recalling and refrasing the known results on the compatibility conditions for (Hopf) matched pairs $(B,D)$, which defines the generalized smash biproduct $B\times D$.
In order to prove our main results, we describe in Section \ref{section3} the categories ${}_D\C^B$ and ${}^D\C_B$ of abstract Doi-Hopf modules .
In Section \ref{section4}, we construct the right partial dual $D\times B^\ast$ and show that $B\times D$ is left smash over $D\times B^\ast$.
Finally in Section \ref{section5},
we describe
the isomorphism between the categories of (left-right) Yetter-Drinfeld modules over $B\times D$ and $D\times B^\ast$.

\section{Preliminaries: Descriptions of (generalized) smash product Hopf algebras}\label{section2}

\subsection{Modules and comodules in a braided monoidal category}

We refer to \cite{Kas95,EGNO15} etc. for basics of braided monoidal categories.

Throughout the paper, let $\C$ be a braided monoidal category with braiding $c$.
We always use notations
$$c_{X,Y}
=\ArxivFigure{0001}
\;\;\;\;\text{and}\;\;\;\;
c_{X,Y}^{-1}
=\ArxivFigure{0002}$$
for all objects $X,Y\in\C$.

For an algebra $B$ in $\C$, we write its multiplication and unit as
$$m_B=\ArxivFigure{0003}
\;\;\;\;\text{and}\;\;\;\;
u_B=\ArxivFigure{0004},$$
respectively.
For a coalgebra $D$ in $\C$, we write its comultiplication and counit as
$$\Delta_D=\ArxivFigure{0005}
\;\;\;\;\text{and}\;\;\;\;
\e_D=\ArxivFigure{0006}.$$

Moreover, we denote:
\begin{itemize}
\item by ${}_B\C$ the category of left $B$-modules in $\C$;
\item by $\C{}_B$ the category of right $B$-modules in $\C$;
\item by ${}^D\C$ the category of left $D$-comodules in $\C$;
\item by $\C{}^D$ the category of right $D$-comodules in $\C$.
\end{itemize}
Also, left $B$-actions, right $B$-actions, left $D$-coactions and right $D$-coactions on $V\in\C$ would be respectively denoted by
$$
\ArxivFigure{0007},\;\;\;\;\;\;\;\;
\ArxivFigure{0008},\;\;\;\;\;\;\;\;
\ArxivFigure{0009},\;\;\;\;\;\;\;\;
\ArxivFigure{0010}.
$$

\subsection{Smash products and smash coproducts in a braided monoidal category}

The following definition could be found in \cite[Section 3.6]{HS20}.

\begin{definition}\label{def:smashprodcoprod}
Let $K$ be a bialgebra in $\C$.
\begin{itemize}
\item[(1)]
Suppose $B$ is an algebra in ${}_K\C$ with $K$-action $p_B$,
and $D$ is an algebra in ${}^K\C$ with $K$-coaction $q_D$. The smash product algebra $B\#D$ is the object $B\otimes D\in\C$ with multiplication $m_{B\#D}$ and unit $u_{B\#D}$ given by
$$
m_{B\#D}
=
\ArxivFigure{0011}
\;\;\;\;\;\;\;\;\text{and}\;\;\;\;\;\;\;\;
u_{B\#D}=\ArxivFigure{0012};
$$

\item[(2)]
Suppose $B$ is a coalgebra in ${}^K\C$ with $K$-coaction $q_B$,
and $D$ is a coalgebra in ${}_K\C$ with $K$-action $p_D$. The smash coproduct coalgebra $B\#D$ is the object $B\otimes D\in\C$ with comultiplication $\Delta_{B\#D}$ and unit $\e_{B\#D}$ given by
$$
\Delta_{B\#D}
=
\ArxivFigure{0013}
\;\;\;\;\;\;\;\;\text{and}\;\;\;\;\;\;\;\;
\e_{B\#D}=\ArxivFigure{0014}.
$$
\end{itemize}
\end{definition}

\begin{definition}\label{def:smashbiprod}
Let $K$ be a bialgebra in $\C$.
Suppose that $B$ is an algebra in ${}_K\C$ as well as a coalgebra in ${}^K\C$, and that $D$ is an algebra in ${}^K\C$ as well as a coalgebra in ${}_K\C$.
With notations in Definition \ref{def:smashprodcoprod}, we say that $B\#D$ is a left $K$-smash biproduct bialgebra (resp. Hopf algebra),
if $(B\#D,m_{B\#D},u_{B\#D},\Delta_{B\#D},\e_{B\#D})$ forms a bialgebra (resp. Hopf algebra) in $\C$.
\end{definition}

%
%
%
%

\subsection{Matched pairs, Hopf matched pairs and generalized smash biproducts}

For convenience, we recall and refrase the notions introduced in \cite[Definition 2.5]{BD99} and \cite[Theorem 5.4]{BCT13} as the following definition.

\begin{definition}(c.f. \cite{BD99} and \cite{BCT13})\label{def:matchedpair}
Let $B$ and $D$ be both algebras and coalgebras in a braided monoidal category $\C$ with structures
\begin{equation}\label{eqn:BDactcoact}
B\in{}_D\C,\;\;\;\;B\in{}^D\C,\;\;\;\;
D\in\C{}_B,\;\;\;\;D\in\C{}^B.
\end{equation}
\begin{itemize}
\item[(1)]
We call $(B,D)$ a matched pair, if the following conditions are staisfied:
\begin{itemize}
\item[(i)]

$A\ArxivFigure{0015}=\id_1$,
$\ArxivFigure{0016}
=\ArxivFigure{0017}$
and
$\ArxivFigure{0018}
=\ArxivFigure{0019}$
hold for all $A\in\{B,D\}$;

\item[(ii)]
$\ArxivFigure{0020}
=\ArxivFigure{0021}
=\ArxivFigure{0022}$
and
$\ArxivFigure{0023}
=\ArxivFigure{0024}
=\ArxivFigure{0025}$;

\item[(iii)]
$\ArxivFigure{0026}
=\ArxivFigure{0027}
=\ArxivFigure{0028}$
and
$\ArxivFigure{0029}
=\ArxivFigure{0030}
=\ArxivFigure{0031}$;

\item[(iv)]
Algebra-coalgebra compatiblity:
$$\ArxivFigure{0032}=
\ArxivFigure{0033},
\;\;\;\;\;\;
\ArxivFigure{0034}=\ArxivFigure{0035};$$

Module-algebra compatibility:
$$\ArxivFigure{0036}
=\ArxivFigure{0037},
\;\;\;\;\;\;
\ArxivFigure{0038}=\ArxivFigure{0039}$$

Comodule-coalgebra compatibility:
$$\ArxivFigure{0040}=\ArxivFigure{0041},
\;\;\;\;\;\;
\ArxivFigure{0042}=\ArxivFigure{0043};
$$

Module-coalgebra compatibility:
$$
\ArxivFigure{0044}=\ArxivFigure{0045},
\;\;\;\;\;\;
\ArxivFigure{0046}=\ArxivFigure{0047}
;$$

Comodule-algebra compatibility:
$$\ArxivFigure{0048}=\ArxivFigure{0049},
\;\;\;\;\;\;
\ArxivFigure{0050}=\ArxivFigure{0051}.
$$
\item[(v)]
Module-comodule compatibility:
\begin{equation}\label{eqn:modcomodaxiom}
\ArxivFigure{0052}
=\ArxivFigure{0053}.
\end{equation}
\end{itemize}

\item[(2)]
A matched pair $(B,D)$ in $\C$ is called a Hopf matched pair, if
one of the following equivalent conditions is satisfied:
$$
\ArxivFigure{0054}
=
\ArxivFigure{0055},\;\;\;\;
\ArxivFigure{0056}
=
\ArxivFigure{0057},
$$

$$
\ArxivFigure{0058}
=
\ArxivFigure{0059},\;\;\;\;
\ArxivFigure{0060}
=
\ArxivFigure{0061}$$
\end{itemize}
\end{definition}

\begin{remark}
A matched pair $(B,D)$ together with the structures (\ref{eqn:BDactcoact}) is called a Hopf datum in \cite{BD99}.
Also, the conditions
(i)-(v) in Definition \ref{def:matchedpair}(1) is
is referred to as the Bespalov-Drabant list by \cite{BCT13}.

Moreover, for a matched pair $(B,D)$, the equivalence of the four equations in Definition \ref{def:matchedpair}(2) is known in \cite[Theorem 5.4]{BCT13}.
\end{remark}

\begin{lemma}(\cite{BD99})\label{lem:crossprod0}
Suppose $(B,D)$ is a matched pair in a braided monoidal category $\C$. Then the tensor product object $B\otimes D$ is an algebra and a coalgebra (denoted by $B\times D$) in $\C$ with the following structures:
\begin{itemize}
\item
multiplication $m_{B\times D}$ and unit $u_{B\times D}$ given by
$$
m_{B\times D}
=\ArxivFigure{0062}
\;\;\;\;\;\;\;\;\text{and}\;\;\;\;\;\;\;\;
u_{B\times D}=\ArxivFigure{0063},
$$

\item
comultiplication $\Delta_{B\times D}$ and unit $\e_{B\times D}$ given by
$$
\Delta_{B\times D}
=\ArxivFigure{0064}
\;\;\;\;\;\;\;\;\text{and}\;\;\;\;\;\;\;\;
\e_{B\times D}=\ArxivFigure{0065}.
$$
\end{itemize}
\end{lemma}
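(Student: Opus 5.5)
Since the figures are not visible, I will interpret the structures in the standard Bespalov--Drabant way. Write $\triangleright:D\otimes B\to B$ and $\triangleleft:D\otimes B\to D$ for the left $D$-action on $B$ and the right $B$-action on $D$. Write $\delta_B:B\to D\otimes B$ and $\delta_D:D\to D\otimes B$ for the coactions. The multiplication $m_{B\times D}$ is then $(m_B\otimes m_D)\circ(\id_B\otimes\phi\otimes\id_D)$, where
$$\phi=(\triangleright\otimes\triangleleft)\circ(\id_D\otimes c_{D,B}\otimes\id_B)\circ(\Delta_D\otimes\Delta_B).$$
The comultiplication is dually $(\id_B\otimes\psi\otimes\id_D)\circ(\Delta_B\otimes\Delta_D)$, where $\psi$ is built from the two coactions, the multiplications and one braiding. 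The unit is $u_B\otimes u_D$ and the counit is $\e_B\otimes\e_D$.

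The plan is to reduce everything to the general Majid / Van Daele--Van Keer criterion for twisted tensor products, and its dual. $(B\otimes D,m)$ is an associative unital algebra if and only if $\phi$ is \emph{normal}, i.e. $\phi\circ(u_D\otimes\id_B)=\id_B\otimes u_D$ and $\phi\circ(\id_D\otimes u_B)=u_B\otimes\id_D$. It must also satisfy the two pentagon-type identities
$$\phi\circ(m_D\otimes\id_B)=(\id_B\otimes m_D)\circ(\phi\otimes\id_D)\circ(\id_D\otimes\phi),$$
$$\phi\circ(\id_D\otimes m_B)=(m_B\otimes\id_D)\circ(\id_B\otimes\phi)\circ(\phi\otimes\id_B).$$
I would first quote (or reprove by a short diagram chase) this criterion in $\C$. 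The proof is purely formal: associativity on $B\otimes D\otimes B\otimes D\otimes B\otimes D$ is rewritten using naturality of $c$, and the argument never uses the coalgebra structures.

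Next I verify the three conditions for the specific $\phi$. Normality follows from the unit axioms in (i) and (ii): the action of $1_D$ is trivial, $\triangleleft$ respects units, and $\Delta(1)=1\otimes 1$. For the first pentagon identity, I expand the left side using the module axiom for $\triangleright$ and the antimultiplicativity-free rule for $\triangleleft$, namely that $m_D$ is compatible with the right $B$-action via the module-algebra condition. The $\Delta_D(m_D)$ occurring in $\phi$ is handled by the algebra-coalgebra compatibility in (iv). The right side contains two copies of $\phi$, hence two copies of $\Delta_B$. These are merged using coassociativity together with the module-coalgebra compatibility, which says that $\Delta_B$ commutes with $\triangleright$ up to the twisting. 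After this, both sides become the same string diagram up to naturality of $c$. The second identity is the mirror computation with the roles of $B$ and $D$ exchanged.

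The coalgebra part is formally dual. Here $\psi$ must be conormal, using the counit conditions in (i) and (iii), and must satisfy the dual pentagon identities. These follow from the comodule axioms, the comodule-coalgebra and comodule-algebra compatibilities, and coassociativity, in diagrams obtained by reflecting the previous ones upside down. Condition (v) is not needed at this level, since it only enters when $m$ and $\Delta$ are compared.

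I expect the main obstacle to be the pentagon identities. There the coproduct of a product and the product of an action must be reorganized simultaneously, so one has to use exactly the right compatibility from (iv) in the right order, with the braidings tracked correctly. I would first try to establish the auxiliary identity $\Delta_B\circ\triangleright=$ the twisted tensor product of actions. Using it as a lemma should let each pentagon collapse in two or three moves.
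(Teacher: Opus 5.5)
Your proposal is correct. The paper gives no argument for this lemma: it quotes it from \cite{BD99}, and Definition~\ref{def:matchedpair}(1) is the Bespalov--Drabant list. So there is no competing route in the paper to compare against. Your reduction to the twisting-map criterion for $\phi$, and its dual for $\psi$, is the standard way to prove the statement. Your account of which axioms enter is accurate, including your remark that (v) is never used.

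As a check in $\Vec$, write $\delta_D(d)=d_{(0)}\otimes d_{(1)}$. The two twisted axioms you invoke then read
$\Delta_D(dd')=d_{(1)}d'_{(1)(0)}\otimes(d_{(2)}\triangleleft d'_{(1)(1)})d'_{(2)}$
and
$\Delta_B(d\triangleright b)=(d_{(1)(0)}\triangleright b_{(1)})\otimes d_{(1)(1)}(d_{(2)}\triangleright b_{(2)})$.
Both sides of your first pentagon identity then reduce to
$d_{(1)}\triangleright(d'_{(1)(0)}\triangleright b_{(1)})\otimes\bigl(d_{(2)}\triangleleft d'_{(1)(1)}(d'_{(2)}\triangleright b_{(2)})\bigr)(d'_{(3)}\triangleleft b_{(3)})$.
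On the left this also needs the right-module axiom of $\triangleleft$, to merge $(x\triangleleft y)\triangleleft z$, not only the module axiom of $\triangleright$.

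A few minor corrections:
\begin{enumerate}
\item The twisting-map criterion for $(m_B\otimes m_D)\circ(\id_B\otimes\phi\otimes\id_D)$ involves no braiding, so naturality of $c$ plays no role in it.
\item The identity for $\Delta_B\circ\triangleright$ that you plan to establish as a lemma is already one of the axioms in (iv).
\item The algebra--coalgebra compatibility is used again in the coassociativity half. For example, $(\Delta_D\otimes\id_B)\circ\psi$ applies $\Delta_D$ to a product, so it is needed there alongside the comodule conditions.
\item In a non-symmetric $\C$, the duality to invoke for the coalgebra half is the $180^\circ$ rotation. This rotation keeps crossing types and exchanges the roles of $B$ and $D$, and the paper uses it in Section~\ref{section3}. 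A plain upside-down reflection would turn $c$ into $c^{-1}$, and the reflected hypotheses would then no longer be the ones in the list.
\end{enumerate}
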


Evidently, we could find:

\begin{corollary}\label{cor:mult&comult}
Suppose $(B,D)$ is a matched pair in braided monoidal category $\C$. Then the following equations hold:
$$
(\e_B\otimes\id_D)\circ m_{B\times D}
=\ArxivFigure{0066},\;\;\;\;
(\id_B\otimes\e_D)\circ m_{B\times D}
=\ArxivFigure{0067}
$$
and
$$
\Delta_{B\times D}\circ(u_B\otimes\id_D)
=\ArxivFigure{0068},\;\;\;\;
\Delta_{B\times D}\circ(\id_B\otimes u_D)
=\ArxivFigure{0069}.
$$
\end{corollary}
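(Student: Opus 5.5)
Since the pictures are hidden, I will work from the standard conventions of Bespalov--Drabant and reconstruct the structure maps of $B\times D$ from them. Write $\triangleright:D\otimes B\to B$ for the left $D$-action and $\delta_B:B\to D\otimes B$ for the left $D$-coaction on $B$. Write $\triangleleft:D\otimes B\to D$ for the right $B$-action and $\delta_D:D\to D\otimes B$ for the right $B$-coaction on $D$. The multiplication of Lemma \ref{lem:crossprod0} is then
$$m_{B\times D}=(m_B\otimes m_D)\circ(\id_B\otimes\phi\otimes\id_D),$$
where $\phi:D\otimes B\to B\otimes D$ is built from $\Delta_D$, $\Delta_B$, the braiding, $\triangleright$ and $\triangleleft$. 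Roughly, $\phi(d\otimes b)=(d_{(1)}\triangleright b_{(1)})\otimes(d_{(2)}\triangleleft b_{(2)})$ with the appropriate braidings inserted. Dually,
$$\Delta_{B\times D}=(\id_B\otimes\psi\otimes\id_D)\circ(\Delta_B\otimes\Delta_D),$$
with $\psi:B\otimes D\to D\otimes B$ built from $m_B$, $m_D$ and the two coactions. The unit is $u_B\otimes u_D$ and the counit is $\e_B\otimes\e_D$.

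The plan is to verify each of the four identities by inserting a unit or counit into these formulas. Only the axioms in Definition \ref{def:matchedpair}(1)(i)--(iii) should be needed.

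For $(\e_B\otimes\id_D)\circ m_{B\times D}$, I first push $\e_B$ through $m_B$, using that $\e_B$ is an algebra morphism (from (i)). This gives $\e_B\otimes\e_B$ applied to the $B$-factors.
\begin{itemize}
\item On the left-hand $B$-factor, $\e_B$ remains as a cap.
\item On the factor $d_{(1)}\triangleright b_{(1)}$, I use the module--counit compatibility in (ii) or (iii), namely $\e_B\circ\triangleright=\e_D\otimes\e_B$. This kills $d_{(1)}$ and $b_{(1)}$.
\item Counitality of $\Delta_D$ and $\Delta_B$ then collapses the remaining factor to $d\triangleleft b$.
\item Naturality of the braiding removes the crossings, since $\e$ is a morphism to $\1$.
\end{itemize}
The result should be $\e_B\otimes\big(m_D\circ(\triangleleft\otimes\id_D)\big)$, up to reordering. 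This is exactly the first picture.

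The second identity, for $(\id_B\otimes\e_D)\circ m_{B\times D}$, is symmetric. I would use $\e_D\circ\triangleleft=\e_D\otimes\e_B$ and obtain $m_B\circ(\id_B\otimes\triangleright)$ tensored with $\e_D$.

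The two comultiplication identities are the formal duals. For $\Delta_{B\times D}\circ(u_B\otimes\id_D)$, I use that $u_B$ is a coalgebra morphism, $\Delta_B u_B=u_B\otimes u_B$. Next I apply the comodule--unit axiom $\delta_D$-type compatibility in (ii) or (iii), i.e. the coaction on $B$ sends $u_B$ to $u_D\otimes u_B$. Together with unitality of $m_D$ and $m_B$, this reduces $\psi$ to the right $B$-coaction on $D$. The expression becomes $(u_B\otimes\delta_D\otimes\id_D)\circ\Delta_D$, suitably braided. The remaining identity is symmetric, using the unit compatibility of $\delta_D$.

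Each step is a single use of a unit or counit axiom plus naturality of $c$, so I do not expect any real difficulty. The only obstacle is bookkeeping: matching my reconstructed $\phi$ and $\psi$ exactly with the hidden pictures, and checking which of (ii) or (iii) supplies each unit and counit compatibility. I would settle this first by reading off from the figures in Lemma \ref{lem:crossprod0} exactly which structure maps appear.
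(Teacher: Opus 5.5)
Your proposal is correct and takes essentially the paper's approach; the paper's proof is only the remark that the four identities follow directly from conditions (i)--(iii) of Definition \ref{def:matchedpair}(1). Your insertion of $\e_B,\e_D,u_B,u_D$ into $m_{B\times D}$ and $\Delta_{B\times D}$ spells out that verification, using multiplicativity of $\e$, $\Delta\circ u=u\otimes u$, $\e_B\circ\triangleright=\e_D\otimes\e_B=\e_D\circ\triangleleft$, $\delta_B\circ u_B=u_D\otimes u_B=\delta_D\circ u_D$, (co)unitality and naturality of the braiding; in the third identity the axiom used is the unit compatibility of $\delta_B$, not ``$\delta_D$-type'', as your own parenthetical correctly says.
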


\begin{proof}
These equations are directly obtained according to the conditions (i)-(iii) in Definition \ref{def:matchedpair}(1).
\end{proof}

\begin{lemma}(\cite[Theorem 5.4 and Proposition 5.5]{BCT13})\label{lem:crossprod}
Suppose $(B,D)$ is a Hopf matched pair in braided monoidal category $\C$. Then:
\begin{itemize}
\item[(1)]
With structures given in Lemma \ref{lem:crossprod0}, $B\times D$ is a bialgebra in $\C$;

\item[(2)]
If the identity morphisms
$\id_B$ and $\id_D$ are both convolution invertible, then the bialgebra $B\times D$ is a Hopf algebra in $\C$. In this situation, the antipode of $B\times D$ is
$$
S_{B\times D}=\ArxivFigure{0070},$$
where $S_B$ and $S_D$ are the convolution inverses of $\id_B$ and $\id_D$ respectively.
\end{itemize}
\end{lemma}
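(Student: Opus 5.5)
The statement is Lemma \ref{lem:crossprod}, cited from \cite{BCT13}. I would verify it directly in graphical calculus, using the structures of Lemma \ref{lem:crossprod0} and the axioms of Definition \ref{def:matchedpair}.

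For part (1), the algebra and coalgebra axioms of $B\times D$ are already given by Lemma \ref{lem:crossprod0}. It remains to check that $\Delta_{B\times D}$ and $\e_{B\times D}$ are algebra morphisms. The unit conditions are immediate:
\begin{itemize}
\item $\e_{B\times D}\circ u_{B\times D}=\id_1$ and $\e_{B\times D}\circ m_{B\times D}=\e_{B\times D}\otimes\e_{B\times D}$ follow from condition (i) together with the first two formulas of Corollary \ref{cor:mult&comult};
\item $\Delta_{B\times D}\circ u_{B\times D}=u_{B\times D}\otimes u_{B\times D}$ follows from the last two formulas of Corollary \ref{cor:mult&comult} together with (i)--(iii).
\end{itemize}

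The substantive identity is
$$\Delta_{B\times D}\circ m_{B\times D}=(m_{B\times D}\otimes m_{B\times D})\circ(\id\otimes c_{B\times D,B\times D}\otimes\id)\circ(\Delta_{B\times D}\otimes\Delta_{B\times D}).$$
Since $m_{B\times D}$ factors through $\id_B\otimes\phi\otimes\id_D$, where $\phi$ is built from the $D$-action on $B$ and the $B$-action on $D$, I would reduce this to checking it on $B\otimes D$ and $D\otimes B$ separately. On these pieces:
\begin{itemize}
\item the restrictions to $B\otimes 1$ and $1\otimes D$ reproduce the algebra-coalgebra compatibilities in (iv) for $B$ and $D$;
\item the mixed term $D\otimes B$ is where the real content lies.
\end{itemize}
For the mixed term, I would expand both sides with the module-coalgebra, comodule-algebra and module-comodule compatibilities (iv)--(v). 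The discrepancy should then be exactly one of the four equivalent equations of Definition \ref{def:matchedpair}(2); by \cite[Theorem 5.4]{BCT13} it does not matter which one is used. This mixed-term bookkeeping is the main obstacle: the braidings must be routed so that (v) applies, and the extra Hopf condition must be recognized at the end.

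For part (2), I would show that the stated $S_{B\times D}$ is a left and right convolution inverse of $\id_{B\times D}$. Since $\Delta_{B\times D}$ is a twisted tensor coproduct, I would compute $m\circ(S_{B\times D}\otimes\id)\circ\Delta$ as follows:
\begin{itemize}
\item first, use the module-algebra and comodule-coalgebra compatibilities to move $S_D$ next to the $D$-factor and collapse $S_D\ast\id_D=u_D\e_D$;
\item second, use condition (i) to remove the now-trivial action and coaction;
\item finally, collapse $S_B\ast\id_B=u_B\e_B$.
\end{itemize}
The other side is symmetric. An alternative is to exhibit the inverse of the Galois map, $(m\otimes\id)\circ(\id\otimes\Delta)$, built from $S_B$ and $S_D$. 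The first route is more direct, so I would try it first. The expected difficulty is again the naturality moves that carry $S_D$ through the action $D\otimes B\to B$; these use that a convolution inverse reverses multiplication up to braiding, a standard fact which I would prove as a small auxiliary lemma.
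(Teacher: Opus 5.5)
\textbf{Verdict: part (2) of your plan contains a step that would fail; part (1) is a sound reduction whose decisive step is only asserted.}

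\textbf{Part (2).} Your plan leans on the auxiliary lemma that ``a convolution inverse reverses multiplication up to braiding''. That lemma is not available here. Its usual proof needs $\Delta\circ m=(m\otimes m)\circ(\id\otimes c\otimes\id)\circ(\Delta\otimes\Delta)$. In a matched pair, however, $B$ and $D$ are only algebras and coalgebras, and their comultiplications are multiplicative only up to a twist by the other factor's action and coaction (the algebra--coalgebra compatibility in Definition \ref{def:matchedpair}(iv)). Already for a bosonization, the antipode of the Yetter--Drinfeld factor reverses products only with respect to the Yetter--Drinfeld braiding, not with respect to $c$.

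Your order of cancellation is also reversed. In $m_{B\times D}\circ(S_{B\times D}\otimes\id)\circ\Delta_{B\times D}$ the $S_B$-part stands next to the $B$-part of the second factor, while $S_D$ and the $D$-part sit at the two ends. So $S_B\ast\id_B$ must cancel first, and after that nothing has to be carried through the action. Your order is the right one for $\id\ast S_{B\times D}$, not for $S_{B\times D}\ast\id$.

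The missing idea is a factorization in the convolution monoid $\mathrm{Hom}(B\times D,B\times D)$, which is associative because $B\times D$ is an algebra and a coalgebra. Put $\iota=\id_B\otimes u_D$, $\gamma=u_B\otimes\id_D$, $\zeta=\id_B\otimes\varepsilon_D$ and $\pi=\varepsilon_B\otimes\id_D$.
\begin{itemize}
\item By (i)--(iii), $\iota$ and $\gamma$ are algebra morphisms and $\zeta$ and $\pi$ are coalgebra morphisms.
\item Composing the two ``in particular'' identities of Lemma \ref{lem:crossprodalg} gives $\id_{B\times D}=(\iota\zeta)\ast(\gamma\pi)$.
\item Then $(\iota S_B\zeta)\ast(\iota\zeta)=\iota\circ(S_B\ast\id_B)\circ\zeta=u_{B\times D}\circ\varepsilon_{B\times D}$. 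The same holds on the other side and for $\gamma S_D\pi$.
\end{itemize}
Hence $\id_{B\times D}$ has the two-sided convolution inverse
\[
(\gamma S_D\pi)\ast(\iota S_B\zeta)=m_{B\times D}\circ(u_B\otimes S_D\otimes S_B\otimes u_D)\circ(\varepsilon_B\otimes\id_D\otimes\id_B\otimes\varepsilon_D)\circ\Delta_{B\times D}.
\]
The outer two maps are exactly the action part of $m_{B\times D}$ and the coaction part of $\Delta_{B\times D}$, so this should unwind to the displayed $S_{B\times D}$. Together with (1), this gives (2).

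\textbf{Part (1).} The paper gives no proof of this lemma; it quotes \cite[Theorem 5.4 and Proposition 5.5]{BCT13}. Your reduction of the multiplicativity of $\Delta_{B\times D}$ to the pieces $B\otimes 1$, $1\otimes D$, $B\otimes D$ and $D\otimes B$ is sound. It uses associativity of $m_{B\times D}$ and of the braided tensor product algebra $(B\times D)\otimes(B\times D)$, together with $m_{B\times D}\circ(\id_B\otimes u_D\otimes u_B\otimes\id_D)=\id_{B\times D}$.

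The decisive assertion, however, is stated rather than derived: that after applying (iv)--(v) the leftover on $D\otimes B$ is exactly one of the equations of Definition \ref{def:matchedpair}(2). That is precisely the ``if'' half of the cited theorem, so your (1) has the status of the paper's citation, not of an independent proof.

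A smaller correction: the $B\otimes 1$ and $1\otimes D$ pieces need the comodule--algebra compatibilities as well as the algebra--coalgebra ones. This is because $\Delta_{B\times D}\circ(\id_B\otimes u_D)$ and $\Delta_{B\times D}\circ(u_B\otimes\id_D)$ involve the coaction of $D$ on $B$ and of $B$ on $D$, respectively.
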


\begin{remark}
The converse of Lemma \ref{lem:crossprod}(2) is discussed in \cite{BCT13}. Note that it holds when $\C$ is a locally finite $\k$-linear abelian category.
\end{remark}


\begin{lemma}
Let $K$ be a bialgebra in a braided monoidal category $\C$. Suppose $B\#D$ is a (left) $K$-smash biproduct bialgebra.
Then with the notations in Definition \ref{def:smashprodcoprod}, the structures
$$
\ArxivFigure{0071}
=
\ArxivFigure{0072},
\;\;\;\;
\ArxivFigure{0073}
=
\ArxivFigure{0074},
\;\;\;\;
\ArxivFigure{0075}
=
\ArxivFigure{0076},
\;\;\;\;
\ArxivFigure{0077}
=
\ArxivFigure{0078}
$$
make $(B,D)$ a Hopf matched pair in $\C$
\end{lemma}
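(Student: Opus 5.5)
We have to show that the four structures defined from the $K$-smash biproduct $B\#D$ satisfy the Bespalov--Drabant list and the Hopf condition. Presumably they are the following. The left $D$-action on $B$ is $D\otimes B\to B$, obtained by coacting $q_D$ on $D$ and letting the $K$-factor act on $B$ via $p_B$ (with $\e_D$ on the remaining $D$-leg). Dually, the left $D$-coaction on $B$ comes from $q_B$ followed by $p_D$ applied to $u_D$. The right $B$-action on $D$ and the right $B$-coaction on $D$ are the trivial-type structures via $\e_B$ and $u_B$.

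The cleanest route avoids checking (i)--(v) by hand. The idea is to identify these structures as exactly those for which $m_{B\#D}$, $u_{B\#D}$, $\Delta_{B\#D}$, $\e_{B\#D}$ coincide with $m_{B\times D}$, $u_{B\times D}$, $\Delta_{B\times D}$, $\e_{B\times D}$ of Lemma \ref{lem:crossprod0}. Then we appeal to the equivalence (1)$\Leftrightarrow$(3) from \cite{BD99,BCT13} recalled in the introduction: a bialgebra structure on $B\otimes D$ twisted by $\phi,\psi$ yields a Hopf matched pair via the recovery formulas. These formulas read
$$\text{action of }D\text{ on }B=(\id_B\otimes\e_D)\circ m_{B\times D}\circ(u_B\otimes\id_D\otimes\id_B\otimes u_D),$$
and similarly for the other three, using $\e_B$, $\e_D$, $u_B$, $u_D$ and $\Delta_{B\times D}$, $m_{B\times D}$ as in Corollary \ref{cor:mult&comult}.

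Concretely, the steps are these.

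\begin{enumerate}
\item Compute $(\id_B\otimes\e_D)\circ m_{B\#D}\circ(u_B\otimes\id_D\otimes\id_B\otimes u_D)$ from Definition \ref{def:smashprodcoprod}. Using that $q_D$ preserves the unit and counit, this reduces to the claimed $D$-action on $B$.
\item Compute $(\e_B\otimes\id_D)\circ m_{B\#D}$ on $u_B$-inputs. This gives the right $B$-action on $D$. Since the $K$-action $p_B$ is unital and $B$ is a $K$-module algebra, it is given by $\e_B$, matching the stated formula.
\item Do the dual computations with $\Delta_{B\#D}$ to obtain the two coactions.
\item Verify that the original $m_{B\#D}$ factors as $m_{B\times D}$ built from these four maps. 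This amounts to checking, via the algebra axioms in ${}_K\C$ and ${}^K\C$, that the twist $\phi=(p_B\otimes\id_D)\circ(\id_K\otimes c_{D,B})\circ(q_D\otimes\id_B)$ equals the twist of Lemma \ref{lem:crossprod0}.
\end{enumerate}

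Once the identification is made, the conditions (i)--(v) and the Hopf condition of Definition \ref{def:matchedpair}(2) can be read off from $B\#D$ being a bialgebra. Each condition is obtained by pre- and post-composing the bialgebra axiom $\Delta\circ m=(m\otimes m)\circ(\id\otimes c\otimes\id)\circ(\Delta\otimes\Delta)$ with units and counits of $B$ and $D$. For instance, the Hopf condition in the form equating the two sides of $D$-action compatibility with comultiplication arises by restricting to $D\otimes B$-inputs. Likewise, module-algebra compatibility comes from associativity of $m_{B\#D}$ restricted to $D\otimes B\otimes B$, and module-comodule compatibility (\ref{eqn:modcomodaxiom}) comes from multiplicativity of $\Delta_{B\#D}$ on $D\otimes B$.

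The main obstacle is step 4 together with (\ref{eqn:modcomodaxiom}). The braidings arising from $c_{B\otimes D,B\otimes D}$ in the bialgebra axiom must be rearranged, using naturality and the hexagon identities, into the specific pictured form. I expect to handle this by expanding everything in terms of $p$, $q$ and the braiding of $K$, and then applying naturality of $c$ with respect to $p_B$, $q_D$. The bialgebra property of $K$ itself does not suffice here; the Yetter--Drinfeld-type interplay that is implicit in $B\#D$ being a bialgebra is needed.
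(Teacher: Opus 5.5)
\textbf{Verdict: the overall strategy is sound, but two of the four structures are misidentified and step 2 rests on a false simplification.} Your plan is to recover the four structures from $m_{B\#D}$ and $\Delta_{B\#D}$ by inserting units and counits, and then to invoke (1)$\Rightarrow$(3) of \cite{BD99,BCT13}. The paper states this lemma without proof and implicitly relies on exactly that characterization, so the route is right. The problem is in what you say the route produces.

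Write $\phi_\#=(p_B\otimes\id_D)\circ(\id_K\otimes c_{D,B})\circ(q_D\otimes\id_B)$ and $\psi_\#=(p_D\otimes\id_B)\circ(\id_K\otimes c_{B,D})\circ(q_B\otimes\id_D)$. Then $m_{B\#D}=(m_B\otimes m_D)\circ(\id_B\otimes\phi_\#\otimes\id_D)$ and $\Delta_{B\#D}=(\id_B\otimes\psi_\#\otimes\id_D)\circ(\Delta_B\otimes\Delta_D)$. Two of the recovered maps collapse by naturality of $c$ alone, as you describe: $\lambda:=(\id_B\otimes\e_D)\circ\phi_\#$ and $\delta:=\psi_\#\circ(\id_B\otimes u_D)$. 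The other two are
\[
\rho=(\e_B\otimes\id_D)\circ\phi_\#=(\e_B\circ p_B\otimes\id_D)\circ(\id_K\otimes c_{D,B})\circ(q_D\otimes\id_B),
\]
\[
\theta=\psi_\#\circ(u_B\otimes\id_D)=(p_D\otimes\id_B)\circ(\id_K\otimes c_{B,D})\circ(q_B\circ u_B\otimes\id_D).
\]
These are the structures in the statement, and they do not reduce to $\e_B\otimes\id_D$ and $\id_D\otimes u_B$:
\begin{itemize}
\item Such a reduction would need $\e_B$ to be $K$-linear and $u_B$ to be $K$-colinear.
\item But $B$ is only an algebra in ${}_K\C$ and a coalgebra in ${}^K\C$, so you only know that $u_B$ is $K$-linear and $\e_B$ is $K$-colinear.
\item Unitality of $p_B$ and the module-algebra axiom say nothing about $\e_B\circ p_B$, so the justification in your step 2 is a non sequitur.
\end{itemize}

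In the paper's own application these maps are genuinely nontrivial. In Theorem \ref{thm:cross=smash}, $K=D\times B^\ast$ acts on $B$ through $B^\ast$ by a hit-type action. So $\e_B\circ p_B$ on $B^\ast\otimes B$ is essentially the pairing $\omega$, and $\rho,\theta$ must return the original right $B$-action and $B$-coaction on $D$. These are nontrivial in general, for example $\rho$ for the Drinfeld double of a non-cocommutative Hopf algebra, or $\theta$ for a bismash product of a matched pair of groups. With your trivial choices, step 4 is false: $(\lambda\otimes\rho)\circ(\id_D\otimes c_{D,B}\otimes\id_B)\circ(\Delta_D\otimes\Delta_B)$ degenerates to $(\lambda\otimes\id_D)\circ(\id_D\otimes c_{D,B})\circ(\Delta_D\otimes\id_B)$, which is not $\phi_\#$ in general.

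\textbf{Step 4 also needs the bialgebra compatibility of $B\#D$.} Even with the correct maps, the factorization $\phi_\#=(\lambda\otimes\rho)\circ(\id_D\otimes c_{D,B}\otimes\id_B)\circ(\Delta_D\otimes\Delta_B)$ does not follow from the algebra axioms in ${}_K\C$ and ${}^K\C$. One way to obtain it:
\begin{enumerate}
\item Write $\phi_\#=m_{B\#D}\circ\big((u_B\otimes\id_D)\otimes(\id_B\otimes u_D)\big)$.
\item Postcompose with $(\id_B\otimes\e_D\otimes\e_B\otimes\id_D)\circ\Delta_{B\#D}=\id_{B\#D}$, which holds because $\e_D$ is $K$-linear.
\item Expand $\Delta_{B\#D}\circ m_{B\#D}$ multiplicatively.
\item Use multiplicativity of the counits and the counit properties of $\delta$ and $\theta$.
\end{enumerate}
Dually, apply $(\e_B\otimes\id_D\otimes\id_B\otimes\e_D)\circ\Delta_{B\#D}$ to $m_{B\#D}\circ\big((\id_B\otimes u_D)\otimes(u_B\otimes\id_D)\big)=\id_{B\otimes D}$. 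This gives $\psi_\#=(m_D\otimes m_B)\circ(\id_D\otimes c_{B,D}\otimes\id_B)\circ(\delta\otimes\theta)$.

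\textbf{A shorter route.} You can skip steps 1--4 altogether. $B\#D$ is already $B\otimes D$ with multiplication and comultiplication twisted by $\phi_\#$ and $\psi_\#$. These twists are normal, meaning $\phi_\#\circ(u_D\otimes\id_B)=\id_B\otimes u_D$, $\phi_\#\circ(\id_D\otimes u_B)=u_B\otimes\id_D$, and dually for $\psi_\#$ with counits. Normality holds exactly because $u_D$ is $K$-colinear, $u_B$ is $K$-linear, $\e_B$ is $K$-colinear and $\e_D$ is $K$-linear. So $(B,D,\phi_\#,\psi_\#)$ is a bialgebra admissible tuple, and (1)$\Rightarrow$(3) directly yields the Hopf matched pair with structures $\lambda,\rho,\delta,\theta$.
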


\begin{lemma}\label{lem:crossprodalg}
Let $(B,D)$ be a matched pair in a braided category $\C$.
Then the following equations hold for $B\times D$:
$$
\ArxivFigure{0079}
=
\ArxivFigure{0080}
,\;\;\;\;
\ArxivFigure{0081}
=
\ArxivFigure{0082}
$$
and
$$
\ArxivFigure{0083}
=
\ArxivFigure{0084}
,\;\;\;\;
\ArxivFigure{0085}
=
\ArxivFigure{0086}
.
$$
In particular,
$$m_{B\times D}\circ (\id_B\otimes u_D\otimes u_B\otimes \id_D)=\id_{B\times D}
=(\id_B\otimes \e_D\otimes \e_B\otimes \id_D)\circ\Delta_{B\times D}.$$
Moreover,
for any $(V,p_V)\in{}_{B\times D}\C$ and $(W,r_W)\in\C^{B\times D}$,
we have
\begin{equation}\label{eqn:B11Dact}
\ArxivFigure{0087}
=
\ArxivFigure{0088},
\;\;\;\;\;\;\;\;
\ArxivFigure{0089}
=
\ArxivFigure{0090},
\end{equation}
and
\begin{equation}\label{eqn:B11Dcoact}
\ArxivFigure{0091}
=
\ArxivFigure{0092},
\;\;\;\;\;\;\;\;
\ArxivFigure{0093}
=
\ArxivFigure{0094}.
\end{equation}
\end{lemma}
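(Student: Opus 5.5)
The figures are not visible, so I will infer the content of the equations in Lemma \ref{lem:crossprodalg}. The first pair should say that multiplying by an element of $B\otimes u_D$ on the left, or of $u_B\otimes D$ on the right, is "untwisted":
\[
m_{B\times D}\circ(\id_B\otimes u_D\otimes\id_B\otimes\id_D)=(m_B\otimes\id_D),
\qquad
m_{B\times D}\circ(\id_B\otimes\id_D\otimes u_B\otimes\id_D)=(\id_B\otimes m_D).
\]
(This presumably means the factorization $(b\otimes 1)(b'\otimes d)=bb'\otimes d$, and dually $(b\otimes d)(1\otimes d')=b\otimes dd'$.) The second pair should be the dual statements for $\Delta_{B\times D}$ composed with $\id_B\otimes\e_D$ on the first tensorand, or with $\e_B\otimes\id_D$ on the last.

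The plan is to verify each identity by direct graphical calculation from the formula for $m_{B\times D}$ in Lemma \ref{lem:crossprod0}. That multiplication has the shape $(m_B\otimes m_D)\circ(\id_B\otimes\phi\otimes\id_D)$. Here $\phi:D\otimes B\to B\otimes D$ is built from the left $D$-action on $B$, the right $B$-action on $D$, and the coproducts and coactions. So the first identity reduces to computing $\phi\circ(u_D\otimes\id_B)$. Using condition (i) of Definition \ref{def:matchedpair}(1), the unit axioms for actions and the counit-type normalizations, this should simplify to $\id_B\otimes u_D$. Concretely:
\begin{itemize}
\item the coproduct of $u_D$ is $u_D\otimes u_D$;
\item $u_D$ acts trivially on $B$;
\item $D$-coaction or $B$-action applied to $u_D$ returns units, by the unit conditions in (ii)/(iii).
\end{itemize}
The multiplication then reduces to $m_B\otimes\id_D$, using the unit axiom of $D$. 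The second identity is symmetric, using $\phi\circ(\id_D\otimes u_B)=u_B\otimes\id_D$. The comultiplication identities are the exact duals: apply counits to the braided-twisted coproduct and use the counit clauses of (i)--(iii). Corollary \ref{cor:mult&comult} already records closely related computations, so I will reuse it where it applies.

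The "in particular" statement follows by specializing further. In the first identity, take the $B$ input on the right to be $u_B$; this gives $m_{B\times D}\circ(\id_B\otimes u_D\otimes u_B\otimes\id_D)=m_B\circ(\id_B\otimes u_B)\otimes\id_D=\id$. The comultiplication version is dual.

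For the module statements (\ref{eqn:B11Dact}), let $(V,p_V)\in{}_{B\times D}\C$. The first claim is presumably that acting by $B\otimes u_D$ and then by $u_B\otimes D$ equals acting by $B\otimes D$ directly, or the reverse-order statement expressed through $\phi$. To prove it, write $p_V\circ(\id\otimes p_V)=p_V\circ(m_{B\times D}\otimes\id_V)$ by the associativity of the action. Insert units and apply the identities just proved, together with the "in particular" factorization of $\id_{B\times D}$. The comodule equations (\ref{eqn:B11Dcoact}) are dual, using coassociativity of $r_W$ and the counit identities.

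The main obstacle is the bookkeeping inside $\phi$ and $\psi$: one must check that every occurrence of the braiding against a unit or counit trivializes, by naturality of $c$ with respect to $u$ and $\e$. One must also use the correct unit clause for each of the four (co)actions. I expect this to be routine once the explicit diagrams in Lemma \ref{lem:crossprod0} are unfolded.
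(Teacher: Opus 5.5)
Your proposal is correct and is essentially the paper's proof. The paper likewise unfolds $m_{B\times D}$ from Lemma \ref{lem:crossprod0}, pushes $u_D$ through $\Delta_D$ and the braiding, uses that $u_D$ acts trivially on $B$ together with the unit clause of Definition \ref{def:matchedpair}(iii) (that is, $\mu_D\circ(u_D\otimes\id_B)=u_D\circ\e_B$, followed by the counit of $B$), and declares the remaining identities similar, including the (co)module ones that follow from (co)associativity as you describe; your only slip is that $\phi$ contains no coactions (they enter only $\Delta_{B\times D}$), which does not affect the argument.
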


\begin{proof}
The desired equations could be shown by the definition of $m_{B\otimes D}$ given in Lemma \ref{lem:crossprod}(1). Specifically,
$$
\ArxivFigure{0095}
=
\ArxivFigure{0096}
=
\ArxivFigure{0097}
=
\ArxivFigure{0098}
=
\ArxivFigure{0099},$$
where the third equality is due to Definition \ref{def:matchedpair}(iii).
The other equation holds by a similar argument.
\end{proof}

\section{Abstract Doi-Hopf modules over Hopf matched pairs}\label{section3}

\begin{definition}
Let $(B,D)$ be a matched pair in a in a braided monoidal category $\C$.
We say that $(V,\mu_V,\nu_V)$ is a left-right abstract Doi-Hopf module over $(B,D)$, if
$(V,\mu_V)\in{}_D\C$ and $(V,\nu_V)\in\C^B$ which satisfy
$$
\ArxivFigure{0100}
=\ArxivFigure{0101}.
$$
Denote by ${}_D\C^B$ the category of all left-right Doi-Hopf modules over $(B,D)$ in $\C$.
\end{definition}

\begin{corollary}
$(B,\mu_B,\Delta_B)$ and $(D,m_D,\nu_D)$ are objects in ${}_D\C^B$.
\end{corollary}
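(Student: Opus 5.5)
The plan is to check, for each of the two objects, the three requirements in the definition of ${}_D\C^B$: the left $D$-action is a module structure, the right $B$-coaction is a comodule structure, and the single compatibility diagram defining abstract Doi-Hopf modules holds. For $(B,\mu_B,\Delta_B)$, the action $\mu_B$ is the given left $D$-action from (\ref{eqn:BDactcoact}), so $(B,\mu_B)\in{}_D\C$ holds by assumption. The coaction $\Delta_B$ is the comultiplication, and coassociativity and counitality of the coalgebra $B$ say exactly that $(B,\Delta_B)\in\C^B$. Symmetrically, for $(D,m_D,\nu_D)$, associativity and unitality of $m_D$ give $(D,m_D)\in{}_D\C$, and $\nu_D$ is the given right $B$-coaction on $D$ from (\ref{eqn:BDactcoact}), so $(D,\nu_D)\in\C^B$.

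The substance is the compatibility diagram. For $V=B$, substituting $\mu_V=\mu_B$ and $\nu_V=\Delta_B$, the left side becomes $\Delta_B$ applied after the $D$-action on $B$. This is precisely the situation handled by the module-coalgebra compatibility in Definition \ref{def:matchedpair}(1)(iv), which rewrites $\Delta_B\circ\mu_B$ in terms of $\Delta_D$, the braiding, the coaction $\nu_D$ of $B$ on $D$, and two copies of the action. I will match that axiom term-by-term with the right side of the Doi-Hopf condition. If the shapes differ slightly, I will fix them with naturality of the braiding $c$ and the counit conditions in Definition \ref{def:matchedpair}(1)(i)--(iii).

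For $V=D$, substituting $\mu_V=m_D$ and $\nu_V=\nu_D$, the left side is $\nu_D\circ m_D$. The comodule-algebra compatibility in Definition \ref{def:matchedpair}(1)(iv) expresses the coaction of a product through the coaction of the factors, the action of $D$ on $B$, and the multiplication of $B$. This should match the Doi-Hopf axiom after the same kind of bookkeeping with the braiding.

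The main obstacle is purely diagrammatic. The defining diagram of ${}_D\C^B$ must be read so that it specializes exactly to those two axioms, including which side is acted upon and whether $c$ or $c^{-1}$ appears. I expect the definition was designed so that both axioms drop out directly, and I would verify this first for $V=B$. If a mismatch occurs, my first fallback is to rewrite using the unit and counit properties in (i)--(iii), or to pass through $B\times D$ via Lemma \ref{lem:crossprodalg}.
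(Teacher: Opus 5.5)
Your proposal is correct and follows the same approach the paper intends. The paper states the corollary without proof, directly after the definition, so the intended argument is this direct check: the module and comodule axioms are either given or follow from (co)associativity and (co)unitality, and the Doi--Hopf compatibility reduces to the first module--coalgebra equation of Definition~\ref{def:matchedpair}(1)(iv) for $V=B$ and to the first comodule--algebra equation for $V=D$. Your fallbacks via (i)--(iii) or Lemma~\ref{lem:crossprodalg} should not be needed. For $(B,\mu_B,\Delta_B)$ the defining diagram of ${}_D\C^B$ is the module--coalgebra axiom with $V$ in place of $B$; for $(D,m_D,\nu_D)$ it gives the 180-degree-rotated counterpart of that axiom, which is the comodule--algebra equation with the same braiding.
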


\begin{proposition}\label{prop:DHmodtensorprod}
Suppose $(B,D)$ is a Hopf matched pair in a in a braided monoidal category $\C$.
Then the category ${}_D\C^B$ is a monoidal category with tensor product bifunctor defined as follows:
For any $V,W\in{}_D\C^B$, their tensor product is the object $V\otimes W\in\C$ with
\begin{itemize}
\item
left $D$-action
$\mu_{V\otimes W}
=\ArxivFigure{0102}$
and right $B$-coaction
$\nu_{V\otimes W}
=\ArxivFigure{0103}$.
\end{itemize}
The unit object of ${}_D\C^B$ is $(\1,\mu_\1,\nu_\1)$, where $\mu_\1=\e_D$ and $\nu_\1=u_B$.
\end{proposition}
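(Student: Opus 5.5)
We have to check three things: $V\otimes W$ lies in ${}_D\C^B$; the associativity and unit constraints of $\C$ are morphisms in ${}_D\C^B$; and the tensor product of two morphisms is again a morphism. Since the pictures defining $\mu_{V\otimes W}$ and $\nu_{V\otimes W}$ are not reproduced here, I assume the standard shapes. The action $\mu_{V\otimes W}$ first applies $\Delta_D$ to $D$. One copy of $D$ acts on $V$. The other copy is braided past $V$ and acts on $W$; I take this to happen with a twist through the right $B$-action on $D$, fed by the $B$-coaction of $V$, since only that matches the Doi--Hopf axiom. Dually, $\nu_{V\otimes W}$ collects the two $B$-coactions of $V$ and $W$ and multiplies them in $B$, possibly after letting the coaction of $W$ be modified by the left $D$-action on $B$.

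\emph{Step 1: module and comodule axioms.} To show $(V\otimes W,\mu_{V\otimes W})\in{}_D\C$, I compose two actions and rewrite using:
\begin{itemize}
\item coassociativity of $\Delta_D$;
\item the module--coalgebra compatibility, which says $\Delta_D$ is compatible with the right $B$-action on $D$;
\item the fact that $D$ is a right $B$-module;
\item the comodule axiom of $V$ and naturality of the braiding.
\end{itemize}
Unitality follows from Definition~\ref{def:matchedpair}(1)(i)--(iii): $\Delta_D\circ u_D=u_D\otimes u_D$, and the action on the unit is trivial. The comodule axiom for $\nu_{V\otimes W}$ is dual: I use associativity of $m_B$ and the comodule--algebra compatibility.

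\emph{Step 2: the Doi--Hopf compatibility for $V\otimes W$.} This is the main obstacle. I expand $\nu_{V\otimes W}\circ\mu_{V\otimes W}$ and apply the Doi--Hopf axiom separately to $V$ and to $W$. This produces a term in which $\Delta_D$ meets $m_B$ through the actions and coactions $D\otimes B\to B$ and $B\to B\otimes D$. At exactly this point I invoke the Hopf matched pair condition of Definition~\ref{def:matchedpair}(2). I will pick whichever of its four equivalent forms matches the local configuration; I expect it to be the one expressing the compatibility of $\Delta_{B\times D}$ with $m_{B\times D}$ on the middle $B\otimes D$ factors. After that, the module--comodule compatibility~(\ref{eqn:modcomodaxiom}) and the algebra--coalgebra compatibilities of (iv) should regroup the diagram into $(\mu_{V\otimes W}\otimes\id_B)$ composed with the right-hand side of the Doi--Hopf axiom.

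As a fallback, if the direct diagram chase gets stuck, I would view $V\in{}_D\C^B$ as a $B\times D$-type structure via Lemma~\ref{lem:crossprodalg}. Then I would use the bialgebra property of $B\times D$ from Lemma~\ref{lem:crossprod}(1) to transport the compatibility, by restricting along $u_B$ and $\e_D$ using Corollary~\ref{cor:mult&comult}.

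\emph{Step 3: monoidal structure.} $\mathbf 1$ with $\e_D$ and $u_B$ is an object by the counit and unit axioms. The unit isomorphisms are compatible with the structures because $(\e_D\otimes\id)\circ\Delta_D=\id$ and $m_B\circ(u_B\otimes\id)=\id$. The actions and coactions on $B$ and $D$ involving units are trivial by (i).

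For associativity, I check that $\mu_{(U\otimes V)\otimes W}=\mu_{U\otimes(V\otimes W)}$. Both sides reduce to the same diagram using:
\begin{itemize}
\item coassociativity of $\Delta_D$;
\item the right $B$-module property of $D$;
\item the module--algebra compatibility for the action of $m_B$;
\item the comodule--algebra compatibility.
\end{itemize}
Dually, $\nu_{(U\otimes V)\otimes W}=\nu_{U\otimes(V\otimes W)}$ uses associativity of $m_B$ and the comodule--coalgebra compatibility.

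Functoriality on morphisms is immediate by naturality of the braiding. The pentagon and triangle axioms are then inherited from $\C$.
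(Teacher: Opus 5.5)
Your overall plan (module and comodule axioms, Doi--Hopf compatibility of $V\otimes W$, unit, associator, functoriality) covers more than the paper's written proof. That proof performs only two graphical computations: associativity of $\mu_{V\otimes W}$, and $\mu_{(U\otimes V)\otimes W}=\mu_{U\otimes(V\otimes W)}$ with the $\nu$-version by $180^\circ$ rotation. It never checks the Doi--Hopf compatibility of $V\otimes W$, and you are right that this is where the Hopf condition has to enter.

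The genuine gap is in your Step~1, which is exactly the computation the paper does carry out: the axioms you list cannot give associativity of $\mu_{V\otimes W}$. Use Sweedler-type notation with braidings suppressed, and take $\mu_{V\otimes W}(d\otimes v\otimes w)=d_{(1)}v_{(0)}\otimes(d_{(2)}\triangleleft v_{(1)})w$ as you describe. The two sides are
\[
(dd')_{(1)}v_{(0)}\otimes\bigl((dd')_{(2)}\triangleleft v_{(1)}\bigr)w
\quad\text{and}\quad
d_{(1)}(d'_{(1)}v_{(0)})_{(0)}\otimes\bigl(d_{(2)}\triangleleft(d'_{(1)}v_{(0)})_{(1)}\bigr)(d'_{(2)}\triangleleft v_{(1)})w .
\]
\begin{itemize}
\item The second side contains $\nu_V\circ\mu_V$, which can only be rewritten by the Doi--Hopf axiom of $V$.
\item The first side contains $\Delta_D\circ m_D$. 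Since $D$ is not a bialgebra, this must be expanded by the algebra--coalgebra compatibility in Definition~\ref{def:matchedpair}(1)(iv).
\item After that expansion, the right $B$-action hits a product in $D$, which requires the module--algebra compatibility.
\end{itemize}
None of these three is on your list. The module--coalgebra compatibility you do cite is not what this step uses. The missing idea is that $V\otimes W$ is a $D$-module only because $V$ is a Doi--Hopf module, not merely a $D$-module and a $B$-comodule. The module--coalgebra compatibility is what your Step~3 needs: it rewrites the term $\Delta_D(d_{(2)}\triangleleft u_{(1)})$ that appears in $\mu_{U\otimes(V\otimes W)}$. The module--algebra and comodule--algebra compatibilities you list there are not the relevant ones.

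The comodule half has the same defect, and your guess for $\nu_{V\otimes W}$ is also off. The paper gets the $\nu$-statements by rotating the $\mu$-graphs, so $\nu_{V\otimes W}$ is the $180^\circ$ rotation of $\mu_{V\otimes W}$. Concretely, the $B$-leg of $\nu_V$ is split by the left $D$-coaction on $B$. Its $D$-part acts on the $W$-leg of $\nu_W$, and its $B$-part is multiplied with the $B$-leg of $\nu_W$; no $D$-action on $B$ occurs. Its coassociativity therefore needs the Doi--Hopf axiom of $W$, the algebra--coalgebra compatibility for $\Delta_B\circ m_B$, and the comodule--coalgebra compatibility.

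Finally, your fallback for Step~2 is not viable. An object of ${}_D\C^B$ carries no $B\times D$-action or coaction, so Lemma~\ref{lem:crossprodalg} and the bialgebra structure of $B\times D$ have nothing to act on. The only realization of ${}_D\C^B$ as modules over a bialgebra is $\Phi:{}_D\C^B\cong{}_{D\times B^\ast}\C$, and it needs a braided dual pair, which is not assumed here. Moreover, in the paper Proposition~\ref{prop:monoidaliso1}(2) is derived from the present statement, not conversely. So Step~2 must be a direct diagram chase: expand with the Doi--Hopf axioms of $V$ and $W$ and the compatibilities in (iv)--(v), then invoke the Hopf condition on the mixed terms.
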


\begin{proof}
We check this claim into steps:
\begin{itemize}
\item[(1)]
First, let us show the equation
$$\mu_{V\otimes W}\circ(m_D\otimes \id_{V\otimes W})
=\mu_{V\otimes W}\circ(\id_D\otimes \mu_{V\otimes W})$$
by straightforward calculations:
\begin{eqnarray*}
&&
\ArxivFigure{0104}
=
\ArxivFigure{0105}
=
\ArxivFigure{0106}
\\
&&=
\ArxivFigure{0107}
=
\ArxivFigure{0108}
=
\ArxivFigure{0109}
\\
&&=
\ArxivFigure{0110}.
\end{eqnarray*}

\item[(3)]
For any $U,V,W\in{}_D\C^B$, we claim that $(U\otimes V)\otimes W\cong U\otimes (V\otimes W)$ as objects in ${}_D\C^B$. In fact,
\begin{eqnarray*}
&&
\mu_{(U\otimes V)\otimes W}
\\
&=&
\ArxivFigure{0111}
=
\ArxivFigure{0112}
=
\ArxivFigure{0113}
\\
&=&
\ArxivFigure{0114}
=
\ArxivFigure{0115}
=
\ArxivFigure{0116}
\\
&=&
\mu_{U\otimes (V\otimes W)},
\end{eqnarray*}
and one could also prove $\nu_{(U\otimes V)\otimes W}=\nu_{U\otimes (V\otimes W)}$ via the 180-degree rotation of the above graphs.
\end{itemize}
\end{proof}

As one could find that the graphs of the compatibility conditions in Definition \ref{def:matchedpair} are pairwire symmetric. Thus,
we could define the category ${}^D\C_B$, which has completely similar properties.

\begin{definition}
Let $(B,D)$ be a Hopf matched pair in a in a braided monoidal category $\C$.
We say that $(V,\mu'_V,\nu'_V)$ is a right-left abstract Doi-Hopf module over $(B,D)$, if
$(V,\mu'_V)\in\C_B$ and $(V,\nu'_V)\in{}^C\C$ which satisfy
$$
\ArxivFigure{0117}
=\ArxivFigure{0118}.
$$
Denote by ${}^D\C_B$ the category of all right-left Doi-Hopf modules over $(B,D)$ in $\C$.
\end{definition}

\begin{remark}
$(B,m_B,\nu_B)$ and $(D,\mu_D,\Delta_D)$ are objects in ${}^D\C_B$.
\end{remark}

\begin{proposition}\label{prop:DHmodtensorprod2}
The category ${}^D\C_B$ is a monoidal category.
\end{proposition}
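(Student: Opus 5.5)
The plan is to mirror the proof of Proposition \ref{prop:DHmodtensorprod}, exploiting the symmetry of the compatibility conditions in Definition \ref{def:matchedpair}. For $V,W\in{}^D\C_B$ with right $B$-actions $\mu'_V,\mu'_W$ and left $D$-coactions $\nu'_V,\nu'_W$, I would define the tensor product $V\otimes W$ as follows. The right $B$-action $\mu'_{V\otimes W}$ first applies $\Delta_B$ to the incoming $B$. One copy is braided past $W$ and acts on $V$; the other copy acts on $W$. The left $D$-coaction $\nu'_{V\otimes W}$ coacts on $V$ and on $W$, braids the $D$-leg of $W$ past $V$, and multiplies the two legs with $m_D$. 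Both are pictured as the mirror images, with left and right interchanged, of $\mu_{V\otimes W}$ and $\nu_{V\otimes W}$ in Proposition \ref{prop:DHmodtensorprod}. The braiding should be $c$ or $c^{-1}$ according to that mirror, so that the Doi--Hopf compatibility transports correctly. The unit object is $(\1,\e_B,u_D)$.

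The verification proceeds in the same steps as before.
\begin{itemize}
\item[(1)] \emph{Action axiom.} Check $\mu'_{V\otimes W}\circ(\id\otimes m_B)=\mu'_{V\otimes W}\circ(\mu'_{V\otimes W}\otimes\id_B)$. This uses the algebra-coalgebra compatibility of $B$ from Definition \ref{def:matchedpair}(iv) (multiplicativity of $\Delta_B$, twisted by the $D$-structures) and the right $B$-action axioms on $V$ and $W$.
\item[(1$'$)] \emph{Unit axiom.} The unit axiom for the action holds because $\Delta_B\circ u_B=u_B\otimes u_B$, by (i)--(iii).
\item[(2)] \emph{Coaction axioms.} The coassociativity and counit axioms for $\nu'_{V\otimes W}$ follow from the 180-degree rotated calculation.
\item[(3)] \emph{Doi--Hopf compatibility.} Check the compatibility condition in the definition of ${}^D\C_B$ for $V\otimes W$.
\item[(4)] \emph{Associativity.} Check $\mu'_{(U\otimes V)\otimes W}=\mu'_{U\otimes(V\otimes W)}$ and $\nu'_{(U\otimes V)\otimes W}=\nu'_{U\otimes(V\otimes W)}$, using coassociativity of $\Delta_B$, associativity of $m_D$ and naturality of the braiding.
\end{itemize}
Once (4) holds, the associativity and unit constraints of $\C$ are morphisms in ${}^D\C_B$. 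The pentagon and triangle axioms are then inherited from $\C$.

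The main obstacle is step (3). When the coaction of $V\otimes W$ is pushed past the action of $B$, $B$ enters the $D$-coaction through the module-comodule compatibility (\ref{eqn:modcomodaxiom}). The required identity for $m_D$ interacting with $\Delta_B$ is exactly a Hopf matched pair condition. So here I would invoke Definition \ref{def:matchedpair}(2), in whichever of its four equivalent forms is the mirror of the one used in Proposition \ref{prop:DHmodtensorprod}; their equivalence is given by \cite[Theorem 5.4]{BCT13}. My first attempt would be to take the graph computation for ${}_D\C^B$ and reflect it in a vertical axis, exchanging $c$ with $c^{-1}$. I expect each step there to be justified by the mirrored axiom from (ii)--(v) of Definition \ref{def:matchedpair}, since those conditions come in symmetric pairs. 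Accordingly, the hypothesis needed is that $(B,D)$ is a Hopf matched pair, as assumed in the definition of ${}^D\C_B$.
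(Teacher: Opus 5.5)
Your overall strategy, transporting the argument for ${}_D\C^B$ through the left--right symmetry that exchanges $B$ and $D$, is exactly what the paper intends; it gives no separate proof. The gap is in the data you actually write down. Your $\mu'_{V\otimes W}$ (``apply $\Delta_B$, then act on both factors'') and $\nu'_{V\otimes W}$ (``coact on both factors, then $m_D$'') are the \emph{untwisted} structures, and they do not make ${}^D\C_B$ monoidal.

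For a general matched pair, $\Delta_B$ is not multiplicative: the algebra--coalgebra axiom in Definition \ref{def:matchedpair}(iv) only makes it multiplicative up to a twist by $\nu_B$ and $\mu_B$. So already for $V=W=B$ with $(m_B,\nu_B)$, evaluating on $1\otimes 1$ shows your formula is not an action.

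Worse, it fails even when $B$ and $D$ are honest Hopf algebras. Take $\C=\Vec$ and the exact factorization $S_4=FH$, with $F=S_3$ the stabilizer of $4$ and $H=\langle\sigma\rangle$, $\sigma=(1234)$. Put $B=kF$, $D=kH$, $hf=(h\triangleright f)(h\triangleleft f)$, and trivial coactions. Then $B\times D\cong kS_4$, so $(B,D)$ is a Hopf matched pair. An object of ${}^D\C_B$ is an $H$-graded right $kF$-module with $\deg(v\cdot f)=\deg(v)\triangleleft f$; this is what makes $(D,\mu_D,\Delta_D)$ an object. In your $D\otimes D$, the element $\sigma\otimes\sigma$ has degree $\sigma^2$. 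Since $\sigma\triangleleft(12)=\sigma$, you get $(\sigma\otimes\sigma)\cdot(12)=\sigma\otimes\sigma$, still of degree $\sigma^2$. The compatibility condition, however, demands degree $\sigma^2\triangleleft(12)=\sigma^3$. So $D\otimes D\notin{}^D\C_B$; equivalently, $m_D$ is not $B$-linear, contradicting Corollary \ref{cor:algcoalginDHmod}(1). Because this example is a Hopf matched pair, no appeal to Definition \ref{def:matchedpair}(2) in your step (3) can repair it.

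The missing ingredient is the twist. The paper's $\mu_{V\otimes W}$ on ${}_D\C^B$ is not the plain $\Delta_D$-action. Schematically, with braidings suppressed, it is
\[
d\cdot(v\otimes w)=d_{(1)}\cdot v_{(0)}\otimes (d_{(2)}\triangleleft v_{(1)})\cdot w,
\]
and $\nu_{V\otimes W}$ is twisted dually. This form is forced by Corollary \ref{cor:algcoalginDHmod}(1): $D$-linearity of $m_B$ is precisely the twisted module--algebra axiom $d\triangleright(bb')=(d_{(1)}\triangleright b_{(1)})\big((d_{(2)}\triangleleft b_{(2)})\triangleright b'\big)$. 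If you literally mirror those pictures you get the right structures, but your verbal description of them is wrong. Schematically, the mirror images on ${}^D\C_B$ are
\[
(v\otimes w)\cdot b=v\cdot\big(w_{(-1)}\triangleright b_{(1)}\big)\otimes w_{(0)}\cdot b_{(2)},\qquad v\otimes w\mapsto v_{(-1)}\,w_{(-1)(0)}\otimes v_{(0)}\cdot w_{(-1)(1)}\otimes w_{(0)},
\]
where $\triangleright$ is $\mu_B$ and $d\mapsto d_{(0)}\otimes d_{(1)}$ is $\nu_D$. In the example this gives $(\sigma\otimes\sigma)\cdot(12)=(\sigma\triangleleft(23))\otimes\sigma=\sigma^2\otimes\sigma$, which has the required degree $\sigma^3$.

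With these structures the verification no longer reduces to ``multiplicativity of $\Delta_B$''. Already the action axiom should need the Doi--Hopf condition on $W$ together with the twisted axioms of Definition \ref{def:matchedpair}(iv), and this is where the real content of the mirrored computation lies. A useful check on whatever formulas you settle on is that $m_D,u_D$ and $\Delta_B,\e_B$ must become morphisms in ${}^D\C_B$, as Corollary \ref{cor:algcoalginDHmod} requires.
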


\begin{corollary}\label{cor:algcoalginDHmod}
Let $(B,D)$ be a Hopf matched pair in a braided monoidal category $\C$. Then:
\begin{itemize}
\item[(1)]
$(B,m_B,u_B)$ is an algebra in ${}_D\C^B$,
and $(D,m_D,u_D)$ is an algebra in ${}^D\C_B$;

\item[(2)]
$(B,\Delta_B,\e_B)$ is a coalgebra in ${}^D\C_B$,
and $(D,\Delta_D,\e_D)$ is a coalgebra in ${}_D\C^B$;
\end{itemize}
\end{corollary}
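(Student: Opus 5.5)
The argument is a direct verification in graphical calculus. Since we cannot see the pictures, the plan is organized by which axiom of Definition \ref{def:matchedpair} each check uses. By the pairwise symmetry of the matched-pair conditions (rotation by 180 degrees exchanges algebra and coalgebra data, and also exchanges ${}_D\C^B$ with ${}^D\C_B$), it suffices to prove two facts. First, $(B,m_B,u_B)$ is an algebra in ${}_D\C^B$. Second, $(D,\Delta_D,\e_D)$ is a coalgebra in ${}_D\C^B$. The statements about ${}^D\C_B$ then follow from the same graphs rotated, using Proposition \ref{prop:DHmodtensorprod2}.

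For $B$, the structure in ${}_D\C^B$ is the action $\mu_B$ together with the coaction $\Delta_B$, as in the Corollary following the definition of ${}_D\C^B$. Associativity and unitality of $m_B$ already hold in $\C$, so only two things need checking.
\begin{itemize}
\item $m_B:B\otimes B\to B$ must be a morphism in ${}_D\C^B$, where $B\otimes B$ carries the tensor structure of Proposition \ref{prop:DHmodtensorprod}.
\begin{itemize}
\item $D$-linearity: we need $\mu_B\circ(\id_D\otimes m_B)=m_B\circ\mu_{B\otimes B}$. This is exactly the module-algebra compatibility in Definition \ref{def:matchedpair}(iv), once $\mu_{B\otimes B}$ is expanded (it uses $\Delta_D$, the $D$-coaction on $B$ and the $B$-action on $D$, braided appropriately).
\item $B$-colinearity: we need $\Delta_B\circ m_B=(m_B\otimes\id_B)\circ\nu_{B\otimes B}$. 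Expanding $\nu_{B\otimes B}$ turns this into the algebra-coalgebra compatibility of (iv), i.e.\ the twisted bialgebra-type law for $B$ involving the $D$-action and $D$-coaction.
\end{itemize}
\item $u_B:\1\to B$ must be a morphism from $(\1,\e_D,u_B)$. This follows from conditions (i)–(iii): the $D$-action fixes the unit up to $\e_D$, and $\Delta_B\circ u_B=u_B\otimes u_B$.
\end{itemize}

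For $D$, the structure is the multiplication $m_D$ together with the coaction $\nu_D$. Coassociativity is inherited from $\C$, so again only morphism properties need checking.
\begin{itemize}
\item $\Delta_D$ must be left $D$-linear from $D$ (acting by $m_D$) to $D\otimes D$ (acting by $\mu_{D\otimes D}$). This is the algebra-coalgebra compatibility for $D$ in (iv).
\item $\Delta_D$ must be right $B$-colinear. This is the comodule-coalgebra compatibility of (iv).
\item $\e_D$ must be a morphism to $\1$. This is again covered by (i)–(iii).
\end{itemize}

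The main obstacle is bookkeeping rather than conceptual: $\mu_{V\otimes W}$ and $\nu_{V\otimes W}$ involve braidings and the mixed structures. When they are specialized to $V=W=B$ or $V=W=D$, the graphs must be rearranged, using naturality of $c$ and the (co)associativity of the actions and coactions, until they match one side of a listed axiom verbatim. The hypothesis that $(B,D)$ is a Hopf matched pair is needed only so that the tensor products of Propositions \ref{prop:DHmodtensorprod} and \ref{prop:DHmodtensorprod2} are monoidal. The identities actually used are those of (iv), with the module-comodule condition (v) possibly entering where a coaction must pass through an action in the colinearity check. I would first try matching each check with the single (iv) axiom of the same name. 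If the colinearity of $m_B$ does not close directly, I would insert (v), or one of the equivalent Hopf conditions of Definition \ref{def:matchedpair}(2), to reorder the action and coaction.
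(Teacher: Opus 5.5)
Your verification for $(B,m_B,u_B)$ is the paper's proof. The paper checks that $m_B$ and $u_B$ are left $D$-linear and right $B$-colinear, each square being equivalent to a single equation of Definition \ref{def:matchedpair}(iii)/(iv) (and $\Delta_B\circ u_B=u_B\otimes u_B$), and then declares the remaining claims similar. Each of your checks closes on a single listed axiom: $D$-linearity of $m_B$ is the module--algebra compatibility for $B$, and $B$-colinearity is the algebra--coalgebra compatibility for $B$. So the fallback to (v) or to the Hopf conditions is never needed, and the Hopf hypothesis enters only through Proposition \ref{prop:DHmodtensorprod}.

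The step that fails as written is your symmetry reduction. A $180^\circ$ rotation (with $B\leftrightarrow D$) turns a left $D$-action into a right $B$-coaction and vice versa. It therefore maps ${}_D\C^B$ to itself, not to ${}^D\C_B$; this is exactly how the paper obtains $\nu_{V\otimes W}$ from $\mu_{V\otimes W}$ in the proof of Proposition \ref{prop:DHmodtensorprod}. Rotating your check that $B$ is an algebra in ${}_D\C^B$ yields precisely your check that $D$ is a coalgebra in ${}_D\C^B$. Module--algebra for $B$ becomes comodule--coalgebra for $D$, and algebra--coalgebra for $B$ becomes the one for $D$. So your two direct checks duplicate each other, and the ${}^D\C_B$ claims are never reached.

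Those claims rest on the two axioms of (iv) that your checks never touch:
\begin{itemize}
\item the module--algebra compatibility for $D$, which is right $B$-linearity of $m_D$;
\item the comodule--coalgebra compatibility for $B$, which is left $D$-colinearity of $\Delta_B$.
\end{itemize}
They also use the algebra--coalgebra compatibilities read the other way (left $D$-colinearity of $m_D$, right $B$-linearity of $\Delta_B$), plus the unit/counit conditions from (i)--(iii).

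You can repair this in one of two ways. Either verify these directly, which is as routine as your other checks. Or use a reflection, which does exchange ${}_D\C^B$ and ${}^D\C_B$. A reflection swaps over- and under-crossings, though, so you would have to apply the ${}_D\C^B$ result in an auxiliary braided category. For the left--right mirror this is $\C$ with tensor product $Y\otimes X$ in place of $X\otimes Y$ and braiding given by $c_{Y,X}$, with the roles of $B$ and $D$ interchanged. You would then need to check that the axiom list and the tensor products of Propositions \ref{prop:DHmodtensorprod} and \ref{prop:DHmodtensorprod2} correspond under it. Since the paper itself only says ``similar'' here, this is a repairable flaw in your justification rather than in your method.
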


\begin{proof}
We prove the first claim in (1) as an example, since others are similar.

In fact, by the definition of $(B\otimes B,\mu_{B\otimes B},\nu_{B\otimes B})$ and $(\1,\mu_1,\nu_\1)$ as objects in ${}_D\C^B$ from Proposition \ref{prop:DHmodtensorprod}, we know that:
\begin{itemize}
\item
The commutativity of the diagrams
\begin{eqnarray*}
\xymatrix{
D\otimes B\otimes B  \ar[r]^{\;\;\;\;\mu_{B\otimes B}}  \ar[d]_{\id_D\otimes m_B}
&  B\otimes B  \ar[d]^{m_B}  \\
D\otimes B  \ar[r]^{\mu_{B}}  &  B
}
&\text{and}&
\xymatrix{
D  \ar[r]^{\mu_{1}}  \ar[d]_{\id_D\otimes u_B}  &  \1  \ar[d]^{u_B}  \\
D\otimes B  \ar[r]^{\mu_{B}}  &  B
}
\end{eqnarray*}
are respectively
equivalent to the first equation in (iv) and the first equation in (iii) of Definition \ref{def:matchedpair}.
Thus $m_B$ and $u_B$ are left $D$-module morphisms;

\item
The commutativity of the diagrams
\begin{eqnarray*}
\xymatrix{
B\otimes B  \ar[r]^{\nu_{B\otimes B}\;\;\;\;}  \ar[d]_{m_B}
&  B\otimes B\otimes B  \ar[d]^{m_B\otimes\id_B}  \\
B  \ar[r]^{\Delta_{B}}  &  B\otimes B
}
&\text{and}&
\xymatrix{
\1  \ar[r]^{\nu_{1}}  \ar[d]_{u_B}  &  B  \ar[d]^{u_B\otimes\id_B}  \\
B  \ar[r]^{\Delta_{B}}  &  B\otimes B
}
\end{eqnarray*}
are respectively
equivalent to the fifth equation in Definition \ref{def:matchedpair}(iv) and
$\Delta_B\circ u_B=u_B\otimes u_B$.
Thus $m_B$ and $u_B$ are right $B$-comodule morphisms.
\end{itemize}
\end{proof}

\section{The right partial dual constructed as a generalized smash biproduct}\label{section4}

\subsection{Braided dual pairs of algebras and coalgebras}

\begin{definition}\label{def:brdualpair}
Let $B$ and $B^\ast$ be both algebras and coalgebras in $\C$. We say that $(B,B^\ast)$
together with two morphisms
$$\omega=\ArxivFigure{0119}
\;\;\;\;\text{and}\;\;\;\;
\omega'=\ArxivFigure{0120}$$
is a braided dual pair in $\C$, if the following conditions hold:

\begin{itemize}
\item[(1)]
Braided Hopf pairing condition:
\begin{equation}\label{eqn:braidedHopfpair}
\ArxivFigure{0121}
=
\ArxivFigure{0122},
\;\;\;\;
\ArxivFigure{0123}
=\ArxivFigure{0124}
,\;\;\;\;
\ArxivFigure{0125}
=
\ArxivFigure{0126}
\;\;\;\;\text{and}\;\;\;\;
\ArxivFigure{0127}
=\ArxivFigure{0128}
;
\end{equation}

\item[(2)]
Non-degeneracy condition:
$$\ArxivFigure{0129}
=\ArxivFigure{0130}
\;\;\;\;\text{and}\;\;\;\;
\ArxivFigure{0131}
=\ArxivFigure{0132}.$$
\end{itemize}
\end{definition}

\begin{corollary}
For a braided dual pair $(B,B^\ast)$ in $\C$, we have
\begin{equation}\label{eqn:braidedHopfcopair}
\ArxivFigure{0133}=
\ArxivFigure{0134},
\;\;\;\;
\ArxivFigure{0135}
=
\ArxivFigure{0136}.
\end{equation}
\end{corollary}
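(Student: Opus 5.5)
The plan is to derive each identity in (\ref{eqn:braidedHopfcopair}) from the matching identity in (\ref{eqn:braidedHopfpair}) by the usual ``transpose via the zigzag'' argument. The non-degeneracy condition in Definition \ref{def:brdualpair}(2) says exactly that $\omega$ and $\omega'$ are evaluation and coevaluation morphisms making $B^\ast$ a left dual of $B$. So every morphism between tensor powers of $B$ and $B^\ast$ can be moved across a cup or cap. Throughout, structure maps of $B$ and of $B^\ast$ trade places under $\omega$ and $\omega'$.

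Concretely, take the first copairing equation, which expresses $(\Delta_B\otimes\id_{B^\ast})\circ\omega'$ (or its analogue with $m_{B^\ast}$) in terms of two copies of $\omega'$ glued by the multiplication of $B^\ast$. I would do three steps.
\begin{itemize}
\item[(a)] Precompose the left-hand side with a zigzag: write the $B$-strands of the target as $(\id\otimes\omega)\circ(\omega'\otimes\id)$, using non-degeneracy, so that the output is expressed through an evaluation $\omega$ applied after $m_{B^\ast}$.
\item[(b)] Apply the pairing condition of (\ref{eqn:braidedHopfpair}) that relates $\omega\circ(m_{B^\ast}\otimes\id_B)$ to $\Delta_B$ followed by two nested copies of $\omega$. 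This converts the multiplication on $B^\ast$ into a comultiplication on $B$.
\item[(c)] Cancel the remaining pairs $\omega,\omega'$ with non-degeneracy again, which leaves exactly the right-hand side.
\end{itemize}
The other copairing identity is handled the same way: it trades $\Delta_{B^\ast}$ for $m_B$, using the pairing condition with $\Delta_{B^\ast}$. The unit and counit versions, if included, follow more directly, since $\e_{B^\ast}$ pairs with $u_B$ and so on.

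The main obstacle is bookkeeping of the braiding. The braided Hopf pairing conditions involve nested (not crossed) pairings $\omega\circ(\id\otimes\omega\otimes\id)$ with the appropriate order, and one must check that the strand order produced after transposition matches the stated right-hand side with no leftover $c$ or $c^{-1}$. If a crossing does appear, I would remove it by naturality of the braiding with respect to $\omega$ and $\omega'$ and the hexagon axioms, sliding the cup or cap through the crossing. I would verify this graphically in the paper's string-diagram conventions, doing one equation in full and noting that the second is its 180-degree rotation, as in the proof of Proposition \ref{prop:DHmodtensorprod}.
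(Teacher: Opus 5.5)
Your proposal is correct and matches what the paper intends: the paper states this corollary without proof as an immediate consequence of Definition \ref{def:brdualpair}, and the implied argument is yours, namely that each identity in (\ref{eqn:braidedHopfcopair}) is the transpose of the corresponding identity in (\ref{eqn:braidedHopfpair}), obtained by inserting zigzags, applying the pairing identity, and cancelling zigzags via the non-degeneracy condition. With nested pairings the diagrams stay planar, so no braiding needs to be moved; your fallback via naturality of $c$ only matters if the pairing identities themselves contain crossings.
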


\begin{remark}
For an object $B\in\C$ which is an algebra and a coalgebra, if $B$ has left dual object $B^\ast$, then $(B,B^\ast)$ could become a braided dual pair with $\omega=\ev_B$ and $\omega'_B=\coev_B$.
\end{remark}

\subsection{Construction of the partially dualized Hopf algebra of cross products}

In this subsection, we always assume that $(B,D)$ is a matched pair in a braided monoidal category $\C$, and denote the braiding of $\C$ by $c$.

Suppose $(B,B^\ast)$ is a braided dual pair in the sense of Definition \ref{def:brdualpair} with
$\ArxivFigure{0137}$
and
$\ArxivFigure{0138}$.

\begin{lemma}\label{lem:newmodcomod}
\begin{itemize}
\item[(1)]
$(D,\mu_{D})\in{}_{B^\ast}\C$ and $(D,\nu_{D})\in{}^{B^\ast}\C$ via structures:
$$\mu_{D}=\ArxivFigure{0139}
\;\;\;\;\;\;\;\;\text{and}\;\;\;\;\;\;\;\;
\nu_{D}=\ArxivFigure{0140};
$$

\item[(2)]
$(B^\ast,\mu_{B^\ast})\in\C_D$ and $(B^\ast,\nu_{B^\ast})\in\C^D$ via structures:
$$\mu_{B^\ast}=\ArxivFigure{0141}
\;\;\;\;\;\;\;\;\text{and}\;\;\;\;\;\;\;\;
\nu_{B^\ast}
=\ArxivFigure{0142}.
$$
\end{itemize}
\end{lemma}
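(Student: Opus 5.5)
The structures in Lemma \ref{lem:newmodcomod} are obtained by transporting the given ones across the pairing. The right $B$-action on $D$ becomes, via $\omega'$, a left $B^\ast$-coaction $\nu_D$. The right $B$-coaction on $D$ becomes, via $\omega$, a left $B^\ast$-action $\mu_D$. Likewise, the left $D$-action and left $D$-coaction on $B$ are transposed to a right $D$-action $\mu_{B^\ast}$ and a right $D$-coaction $\nu_{B^\ast}$ on $B^\ast$. So the plan is to check each module or comodule axiom (associativity/unitality, resp. coassociativity/counitality) by pulling it back through the pairing to the corresponding axiom of the original structure. This is routine graphical calculus, done in four parallel cases, of which I would write out one in full and note that the others follow by symmetry (the 180-degree rotation and the left–right mirror, as used in the proof of Proposition \ref{prop:DHmodtensorprod}).

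For (1), take $\mu_D$, defined by applying the right $B$-coaction $D\to D\otimes B$ and pairing the $B$-leg with the incoming $B^\ast$ via $\omega$ (with a braiding to bring the legs adjacent). To show $\mu_D\circ(m_{B^\ast}\otimes\id_D)=\mu_D\circ(\id_{B^\ast}\otimes\mu_D)$, I would first expand the right-hand side. It uses the coaction twice, and by coassociativity of $(D,\nu)$ as a right $B$-comodule this equals a single coaction followed by $\Delta_B$ on the $B$-leg. The two $B$-legs are then paired with the two $B^\ast$ inputs. Next I would apply the braided Hopf pairing condition (\ref{eqn:braidedHopfpair}), in the form pairing $m_{B^\ast}$ against $B$ equals pairing the two factors against $\Delta_B$. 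This collapses the expression to the left-hand side. Naturality of the braiding is needed to move the crossings past the coaction, and this is where the ordering of legs must match exactly the convention in (\ref{eqn:braidedHopfpair}). Unitality follows from $\omega\circ(u_{B^\ast}\otimes\id_B)=\e_B$, which is one of the pairing conditions, together with counitality of the coaction.

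The comodule $\nu_D$ is handled dually, using $\omega'$. Coassociativity of $\nu_D$ reduces to associativity of the right $B$-action on $D$, combined with the copairing identity (\ref{eqn:braidedHopfcopair}) relating $\Delta_{B^\ast}$ with $m_B$ under $\omega'$. Counitality uses unitality of the action and the counit part of the copairing. Part (2) is the same argument with the roles of the sides swapped, using the left $D$-module and comodule structures on $B$.

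I expect the main obstacle to be bookkeeping rather than substance. Each step needs the braiding crossings to come out in the exact orientation required by the specific form of (\ref{eqn:braidedHopfpair}) or (\ref{eqn:braidedHopfcopair}), and this may force me to use $c^{-1}$ instead of $c$ in one of the definitions. When using the copairing, I would also use the non-degeneracy (zig-zag) conditions of Definition \ref{def:brdualpair}(2) to cancel an $\omega$–$\omega'$ pair where needed.
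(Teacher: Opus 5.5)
Your proposal is correct and follows the paper's proof. The paper also writes out only the $\mu_D$ case: it rewrites $\mu_D\circ(m_{B^\ast}\otimes\id_D)$ via the braided Hopf pairing condition (\ref{eqn:braidedHopfpair}) as two copies of $\omega$ against $\Delta_B$, applies coassociativity of the right $B$-coaction on $D$ (its ``third equality''), and folds back; unitality comes from $\omega\circ(u_{B^\ast}\otimes\id_B)=\e_B$ plus counitality of the coaction, and the other three structures are left as ``similar''.

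The one refinement concerns $\mu_{B^\ast}$ and $\nu_{B^\ast}$. These are presumably transposes built from both $\omega$ and $\omega'$, since a $B^\ast$ output cannot be produced without $\omega'$. Their (co)module axioms then come from the zig-zag identities together with the original $D$-action/coaction axioms on $B$, not from the Hopf pairing identities, so part (2) is not literally ``the same argument''. This matches the $\omega$--$\omega'$ cancellation you already anticipated in your last remark.
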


\begin{proof}
As $(B,D)$ is a matched pair in $\C$, it follows by Definition \ref{def:matchedpair}(1) that we are given structures
$$B\in{}_D\C,\;\;\;\;B\in{}^D\C,\;\;\;\;
D\in\C{}_B,\;\;\;\;D\in\C{}^B.$$
Let us verify that $\mu_D$ is a left $B^\ast$-module structure by direct calculations, while other structures could be checked similarly :
$$
\ArxivFigure{0143}
=
\ArxivFigure{0144}
=
\ArxivFigure{0145}
=
\ArxivFigure{0146}
=
\ArxivFigure{0147},
$$
where the third equality is due to $D\in\C^B$.
Moreover,
$$
\ArxivFigure{0148}
=
\ArxivFigure{0149}
=
\ArxivFigure{0150}
=
\ArxivFigure{0151}.
$$


%
%
\end{proof}

The goal of this subsection is to prove that $(D,B^\ast)$ ia a matched pair
with the structures $\mu_D$, $\nu_D$, $\mu_{B^\ast}$ and $\nu_{B^\ast}$ defined in Lemma \ref{lem:newmodcomod}, and
then a generalized smash biproduct $D\times B^\ast$ is formulated to be an algebra and a coalgebra.

\begin{lemma}\label{lem:munumixed}
Let $(B,D)$ be a matched pair, and let $(B,B^\ast)$ be a braided dual pair. Then the following equations hold:
\begin{equation}\label{eqn:mumu&nunu}
\ArxivFigure{0152}
=
\ArxivFigure{0153},
\;\;\;\;\;\;\;\;
\ArxivFigure{0154}
=
\ArxivFigure{0155},
\end{equation}
and
\begin{equation}\label{eqn:munu&numu}
\ArxivFigure{0156}
=
\ArxivFigure{0157},
\;\;\;\;\;\;\;\;
\ArxivFigure{0158}
=
\ArxivFigure{0159}.
\end{equation}
\end{lemma}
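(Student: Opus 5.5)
The four identities in Lemma \ref{lem:munumixed} are stated only as pictures, so I read them from the structures they involve. The equations (\ref{eqn:mumu&nunu}) compare the two new actions, or the two new coactions, of Lemma \ref{lem:newmodcomod} on the two tensorands $D$ and $B^\ast$. The equations (\ref{eqn:munu&numu}) compare a new action with a new coaction. The plan is to unfold every new structure into the old data of the matched pair $(B,D)$ together with the pairing morphisms $\omega,\omega'$, and then reduce to an axiom of Definition \ref{def:matchedpair} or to a property of the braided dual pair.

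Concretely, recall how the new structures arise. Each of $\mu_D,\nu_D$ is obtained from the right $B$-coaction or the right $B$-action on $D$ by closing a $B$-strand with $\omega$ or $\omega'$. Similarly, each of $\mu_{B^\ast},\nu_{B^\ast}$ is obtained from the left $D$-coaction or the left $D$-action on $B$, conjugated by $\omega$ and $\omega'$. So in each identity I substitute these definitions on both sides. I then use naturality of the braiding $c$ to slide the cups and caps past the remaining strands, until both sides are diagrams in $B$ and $D$ with some external $\omega$, $\omega'$ attached. Where a cup and a cap meet on the same $B$-strand, I cancel them by the non-degeneracy (zigzag) condition of Definition \ref{def:brdualpair}(2).

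For (\ref{eqn:mumu&nunu}), after this reduction the core should be one of two facts. The first is that an action and a coaction on the same object commute up to braiding, which should be the module-comodule compatibility (\ref{eqn:modcomodaxiom}), possibly read after a rotation. The second is the compatibility of the right $B$-action with the right $B$-coaction on $D$ (and dually on $B$) coming from Definition \ref{def:matchedpair}(iv). For (\ref{eqn:munu&numu}), I expect the key input to be the module-coalgebra or comodule-algebra compatibilities in (iv), combined with the braided Hopf pairing relations (\ref{eqn:braidedHopfpair}) and (\ref{eqn:braidedHopfcopair}). These relations let me trade a product on $B^\ast$ paired against $B$ for a coproduct on $B$, and conversely. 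I would do one equation of each pair in full and obtain the partner by the 180-degree rotation or mirror symmetry the paper already uses (as in Proposition \ref{prop:DHmodtensorprod}), with $\omega$ and $\omega'$ interchanged.

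The main obstacle will be the bookkeeping of braidings when the duality strands are moved through the action and coaction vertices. One must choose the correct over- or under-crossing so that naturality applies and the zigzag identity closes up, rather than leaving $c$ where $c^{-1}$ is needed. I would first try to arrange each side so that all $\omega'$ sit at the bottom and all $\omega$ at the top, which turns the identity into a statement about morphisms $D\otimes B\to B\otimes D$-type composites. Then (\ref{eqn:modcomodaxiom}) can be applied verbatim. If a crossing mismatch remains, I would fix it by using the inverse-braiding form of the zigzag obtained from Definition \ref{def:brdualpair}(2).
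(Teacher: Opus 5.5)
Your mechanics (substitute the definitions of Lemma \ref{lem:newmodcomod}, slide strands by naturality of $c$, cancel cup--cap pairs) are the ones the paper uses. However, you put the weight on the wrong hypothesis, and for the first pair of identities the step you propose cannot work.

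Those two identities are what is later used to recognise $m_{D\times B^\ast}$ (in Proposition \ref{prop:monoidaliso1}) and $\Delta_{D\times B^\ast}$ (in Lemma \ref{lem:OmegaMmodcomod}). So one side contains $\mu_D$ and $\mu_{B^\ast}$ fed by a comultiplication $\Delta_{B^\ast}$ (resp.\ $\nu_D$ and $\nu_{B^\ast}$ merged by $m_{B^\ast}$). The other side is written in the old data of $(B,D)$ together with $\omega,\omega'$ and the (co)multiplication of $B$. After you unfold the definitions, the two outputs of $\Delta_{B^\ast}$ are closed off by $\omega$ against two different $B$-strands: one comes from the $B$-coaction on $D$, the other from the $D$-action on $B$. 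Dually, $m_{B^\ast}$ receives two strands created by $\omega'$.

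The only hypotheses that mention the (co)multiplication of $B^\ast$ at all are (\ref{eqn:braidedHopfpair}) and (\ref{eqn:braidedHopfcopair}). None of the following can remove a $\Delta_{B^\ast}$ or an $m_{B^\ast}$: the axiom (\ref{eqn:modcomodaxiom}), the compatibilities in Definition \ref{def:matchedpair}(iv), or the zigzag identities of Definition \ref{def:brdualpair}(2). So the reduction you anticipate, to ``action commutes with coaction up to braiding'', is not the core, and trying to invoke (\ref{eqn:modcomodaxiom}) there will stall.

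The missing step is exactly the braided Hopf pairing. It replaces ``$\Delta_{B^\ast}$ paired against two $B$-strands'' by a single $\omega$ against $m_B$ of those strands. Dually, it replaces ``$m_{B^\ast}$ of two $\omega'$'s'' by one $\omega'$ followed by $\Delta_B$.

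In the paper each of the four identities is a three-step computation: expand the definitions, rearrange by naturality, and apply one relation from (\ref{eqn:braidedHopfpair}) or (\ref{eqn:braidedHopfcopair}). No axiom of the matched pair is used for any of them. In particular, for the second pair of identities the module-coalgebra and comodule-algebra compatibilities you expect to need are superfluous; the pairing relations you list alongside them already suffice. Your rotation shortcut for the partner equations is harmless, but as written the plan does not identify what makes the first pair true.
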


\begin{proof}
These could all be shown by using Equations (\ref{eqn:braidedHopfpair}) and (\ref{eqn:braidedHopfcopair}). Specifically,
$$
\ArxivFigure{0160}
=
\ArxivFigure{0161}
=
\ArxivFigure{0162}
=
\ArxivFigure{0163},
$$

$$
\ArxivFigure{0164}
=
\ArxivFigure{0165}
=
\ArxivFigure{0166}
=
\ArxivFigure{0167}.
$$

$$
\ArxivFigure{0168}
=
\ArxivFigure{0169}
=
\ArxivFigure{0170}
=
\ArxivFigure{0171},
$$

$$
\ArxivFigure{0172}
=
\ArxivFigure{0173}
=
\ArxivFigure{0174}
=
\ArxivFigure{0175}.
$$
\end{proof}

\begin{proposition}\label{prop:rightpartialdual1}
Let $(B,D)$ be a matched pair, and let $(B,B^\ast)$ be a braided dual pair. Then
$(D,B^\ast)$ is a matched pair in $\C$.
\end{proposition}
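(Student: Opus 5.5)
Our plan is to check the Bespalov--Drabant list (i)--(v) of Definition \ref{def:matchedpair}(1) for the pair $(D,B^\ast)$ directly, with $D$ in the role of the first tensorand and $B^\ast$ in the role of the second. The structures are those of Lemma \ref{lem:newmodcomod}: the left $B^\ast$-action $\mu_D$, the left $B^\ast$-coaction $\nu_D$, the right $D$-action $\mu_{B^\ast}$ and the right $D$-coaction $\nu_{B^\ast}$. The algebra and coalgebra structures on $D$ are the given ones, and those on $B^\ast$ come from the braided dual pair.

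The guiding principle is a translation dictionary. Each of $\mu_D,\nu_D,\mu_{B^\ast},\nu_{B^\ast}$ is obtained from one of the original four structures of $(B,D)$ by bending a $B$-strand with $\omega$ or $\omega'$. Concretely:
\begin{itemize}
\item $\mu_D$ comes from the right $B$-coaction on $D$;
\item $\nu_D$ comes from the right $B$-action on $D$;
\item $\mu_{B^\ast}$ and $\nu_{B^\ast}$ come from the $D$-coaction and $D$-action on $B$, respectively.
\end{itemize}
So every axiom for $(D,B^\ast)$, after substituting these definitions, should become an axiom for $(B,D)$ with some strands bent. The tools for straightening are the pairing identities (\ref{eqn:braidedHopfpair}) and (\ref{eqn:braidedHopfcopair}), which move (co)multiplications of $B^\ast$ across $\omega,\omega'$ into (co)multiplications of $B$, together with the non-degeneracy (zig-zag) condition. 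Lemma \ref{lem:newmodcomod} has already handled the module and comodule axioms themselves.

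We proceed in order.
\begin{enumerate}
\item[(i)] The normalization conditions involving units and counits follow from (i) for $(B,D)$ and the pairing identities for $u_{B^\ast}$ and $\e_{B^\ast}$.
\item[(ii)] and (iii) These trade places with each other: the unit/counit compatibilities of the new actions are the counit/unit compatibilities of the old coactions, after dualizing.
\item[(iv)] We go through the ten equations and match each one to its partner under the dictionary. For example, the module-algebra compatibility of $\mu_D$ with $m_D$ becomes the comodule-algebra compatibility of $D\in\C^B$. The algebra-coalgebra compatibility for $D$ and $B^\ast$ becomes that for $D$ and $B$, with a multiplication of $B$ turned into a comultiplication of $B^\ast$.
\end{enumerate}
At each step we expect a short graphical computation of the type in the proof of Lemma \ref{lem:newmodcomod}: expand the definitions, apply (\ref{eqn:braidedHopfpair}) to move all $B^\ast$-structure onto $B$, apply the old axiom, and fold back. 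Lemma \ref{lem:munumixed} is used whenever two new structures are composed. Equations (\ref{eqn:mumu&nunu}) and (\ref{eqn:munu&numu}) express composites such as $\mu_D$ followed by $\nu_D$, or $\mu_{B^\ast}$ followed by $\nu_{B^\ast}$, purely in terms of old structures with a single pairing. This avoids tangles of several $\omega$'s and $\omega'$'s.

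We expect the main obstacle to be the module-comodule compatibility (\ref{eqn:modcomodaxiom}) for $(D,B^\ast)$, together with the mixed conditions in (iv) that involve both an action and a coaction. There all four new structures interact and braidings cross the bent strands. For these we will first rewrite each side using Lemma \ref{lem:munumixed}, so that only old actions and coactions remain, connected through one $\omega$ and one $\omega'$. We then use naturality of the braiding to slide the pairing strands to the outside. At that point (\ref{eqn:modcomodaxiom}) for $(B,D)$, rotated by 180 degrees (or read in the dual form), should apply. If the braiding orientations do not match directly, we will insert $c^{-1}$ via the definition of $\omega'$ and use the zig-zag identity to cancel.
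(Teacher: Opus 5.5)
Your route is the same as the paper's. You check (i)--(v) of Definition~\ref{def:matchedpair}(1) for $(D,B^\ast)$ directly: substitute the structures of Lemma~\ref{lem:newmodcomod}, move $B^\ast$-(co)multiplications onto $B$ with (\ref{eqn:braidedHopfpair}) and (\ref{eqn:braidedHopfcopair}), apply an axiom of $(B,D)$, and use Lemma~\ref{lem:munumixed} for composites of new structures.

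However, the dictionary that your step (iv) relies on is wrong in two of its four entries. Only $B$ has a dual, so transposition can only bend $B$/$B^\ast$-strands. Consider $\mu_{B^\ast}\colon B^\ast\otimes D\to B^\ast$. Its $D$-input must already be a $D$-input of the old structure. Its $B^\ast$-input and $B^\ast$-output become a $B$-output and a $B$-input. Hence $\mu_{B^\ast}$ is the transpose, bending both $B$-strands with $\omega$ and $\omega'$, of the left $D$-\emph{action} $D\otimes B\to B$. Likewise, $\nu_{B^\ast}\colon B^\ast\to B^\ast\otimes D$ is the transpose of the left $D$-\emph{coaction} $B\to D\otimes B$. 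This is the reverse of what you wrote; your entries for $\mu_D$ and $\nu_D$ are correct.

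This error changes which old axiom each new one reduces to. Matching equations to partners under your dictionary would send you looking for the wrong identity in roughly half of (iv). The correct correspondences are:
\begin{itemize}
\item the module-coalgebra compatibility of $\mu_{B^\ast}$ with $\Delta_{B^\ast}$ becomes the compatibility of the $D$-action on $B$ with $m_B$;
\item the module-algebra compatibility of $\mu_{B^\ast}$ becomes the module-coalgebra compatibility of the $D$-action on $B$;
\item the comodule-algebra and comodule-coalgebra compatibilities of $\nu_{B^\ast}$ become, respectively, the comodule-coalgebra and comodule-algebra compatibilities of the $D$-coaction on $B$.
\end{itemize}
Similarly, in (ii)/(iii) the counit condition for $\mu_{B^\ast}$ comes from a unit condition of an old \emph{action}, not of a coaction. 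Namely, the left $D$-action $\mu_B$ sends $\id_D\otimes u_B$ to $u_B\circ\e_D$.

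The same correction affects your plan for (v). Consider the four roles, in this order: left action of the second factor on the first, right action of the first on the second, left coaction of the second on the first, right coaction of the first on the second. With the corrected dictionary, passing from $(B,D)$ to $(D,B^\ast)$ permutes these roles cyclically in that order. The $180^\circ$ symmetry of the list, by contrast, is an involution on them. So you should derive the partner of the new (\ref{eqn:modcomodaxiom}) from the dictionary, not assume it is the rotated old one. The paper itself only rewrites both sides with Lemma~\ref{lem:munumixed} and checks the resulting identity directly.
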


\begin{proof}
By the definition of matched pairs,
we aim to check the conditions (i)-(v) in Definition \ref{def:matchedpair}(1) for $(D,B^\ast)$ in steps.
The formulas in Lemma \ref{lem:munumixed} would be used frequently.

Note that the axioms (i) to (iii) are evident.

\begin{itemize}
\item[(iv)]
The former equation of the algebra-coalgebra compatibility for $(D,B^\ast)$ is shown as follows:
\begin{eqnarray*}
&&
\ArxivFigure{0176}
=
\ArxivFigure{0177}
=
\ArxivFigure{0178}
=
\ArxivFigure{0179}
\\
&&=
\ArxivFigure{0180}
=
\ArxivFigure{0181}
=
\ArxivFigure{0182}.
\end{eqnarray*}

The latter equation of the algebra-coalgebra compatibility for $(D,B^\ast)$ is shown as follows:

\begin{eqnarray*}
&& \ArxivFigure{0183}
=
\ArxivFigure{0184}
=
\ArxivFigure{0185}
=
\ArxivFigure{0186}
\\
&&=
\ArxivFigure{0187}
=
\ArxivFigure{0188}
=
\ArxivFigure{0189}
\\
&&=
\ArxivFigure{0190}
=
\ArxivFigure{0191}
=
\ArxivFigure{0192}.
\end{eqnarray*}

The former equation of the module-algebra compatibility for $(D,B^\ast)$ is shown as follows:
\begin{eqnarray*}
&& \ArxivFigure{0193}
=
\ArxivFigure{0194}
=
\ArxivFigure{0195}
=
\ArxivFigure{0196}
=
\ArxivFigure{0197}  \\
&=&
\ArxivFigure{0198}
=
\ArxivFigure{0199}
=
\ArxivFigure{0200}
=
\ArxivFigure{0201}
\end{eqnarray*}

The latter equation of the module-algebra compatibility for $(D,B^\ast)$ is shown as follows:
\begin{eqnarray*}
&&
\ArxivFigure{0202}
=
\ArxivFigure{0203}
=
\ArxivFigure{0204}
=
\ArxivFigure{0205}  \\
&&
=
\ArxivFigure{0206}
=
\ArxivFigure{0207}
=
\ArxivFigure{0208}  \\
&&
=
\ArxivFigure{0209}
=
\ArxivFigure{0210}
=
\ArxivFigure{0211}
=
\ArxivFigure{0212}.
\end{eqnarray*}

The former equation of the comodule-coalgebra compatibility for $(D,B^\ast)$ is shown as follows:
\begin{eqnarray*}
&&
\ArxivFigure{0213}
=
\ArxivFigure{0214}
=
\ArxivFigure{0215}
=
\ArxivFigure{0216}
=
\ArxivFigure{0217}
\\
&&=
\ArxivFigure{0218}
=
\ArxivFigure{0219}
=
\ArxivFigure{0220}
=
\ArxivFigure{0221}
\\
&&=
\ArxivFigure{0222}
=
\ArxivFigure{0223}
=
\ArxivFigure{0224}.
\end{eqnarray*}

The latter equation of the comodule-coalgebra compatibility for $(D,B^\ast)$ is shown as follows:
\begin{eqnarray*}
&&
\ArxivFigure{0225}
=
\ArxivFigure{0226}
=
\ArxivFigure{0227}
\overset{(\ref{eqn:braidedHopfcopair})}=
\ArxivFigure{0228}
\\
&&=
\ArxivFigure{0229}
=
\ArxivFigure{0230}
=
\ArxivFigure{0231}
\\
&&=
\ArxivFigure{0232}
=
\ArxivFigure{0233}
=
\ArxivFigure{0234}
\\
&&=
\ArxivFigure{0235}
=
\ArxivFigure{0236}
=
\ArxivFigure{0237}.
\end{eqnarray*}

The former equation of the module-coalgebra compatibility for $(D,B^\ast)$ is shown as follows:
\begin{eqnarray*}
&&
\ArxivFigure{0238}
=
\ArxivFigure{0239}
=
\ArxivFigure{0240}
=
\ArxivFigure{0241}
\\
&&=
\ArxivFigure{0242}
=
\ArxivFigure{0243}
=
\ArxivFigure{0244},
\end{eqnarray*}
where the fourth equality is due to the fourth equation of (iv) in Definition \ref{def:matchedpair}(1).

The latter equation of the module-coalgebra compatibility for $(D,B^\ast)$ is shown as follows:
\begin{eqnarray*}
&&
\ArxivFigure{0245}
=
\ArxivFigure{0246}
=
\ArxivFigure{0247}
=
\ArxivFigure{0248}
\\
&&=
\ArxivFigure{0249}
=
\ArxivFigure{0250}
=
\ArxivFigure{0251}
=
\ArxivFigure{0252},
\end{eqnarray*}
where the fifth equality is due to the first equation of (iv) in Definition \ref{def:matchedpair}(1).

The former equation of the module-coalgebra compatibility for $(D,B^\ast)$ is shown as follows:
\begin{eqnarray*}
&&
\ArxivFigure{0253}
=
\ArxivFigure{0254}
=
\ArxivFigure{0255}
=
\ArxivFigure{0256}
\\
&&=
\ArxivFigure{0257}
=
\ArxivFigure{0258},
\end{eqnarray*}
where the penultimate equality is due to the second equation of (iv) in Definition \ref{def:matchedpair}(1).

The latter equation of the comodule-algebra compatibility for $(D,B^\ast)$ is shown as follows:
\begin{eqnarray*}
&&
\ArxivFigure{0259}
=
\ArxivFigure{0260}
=
\ArxivFigure{0261}
=
\ArxivFigure{0262}
\\
&&=
\ArxivFigure{0263}
=
\ArxivFigure{0264}
=
\ArxivFigure{0265}
=
\ArxivFigure{0266},
\end{eqnarray*}
where the fifth equality is due to the third equation of (iv) in Definition \ref{def:matchedpair}(1).

\item[(v)]
Using Lemma \ref{lem:munumixed}(1), we check the module-comodule compatibility for $(D,B^\ast)$ by the following calculations:
\begin{eqnarray*}
&&
\ArxivFigure{0267}
=
\ArxivFigure{0268}
=
\ArxivFigure{0269}
=
\ArxivFigure{0270}
\\
&&=
\ArxivFigure{0271}
=
\ArxivFigure{0272}
=
\ArxivFigure{0273},
\end{eqnarray*}
where the penultimate equality could be checked directly.
\end{itemize}

As a conclusion, the pair $(D,B^\ast)$ satisfies the conditions of matched pair. It follows that the generalized smash biproduct $D\times B^\ast$ is an algebra as well as a coalgebra.
\end{proof}

\subsection{Abstract Doi-Hopf modules over a Hopf matched pair}

In this subsection, we would use
the following immediate lemma. Its inverse claim is the reconstruction of Hopf algebras in a braided monoidal category, which is introduced in \cite{Maj93}

\begin{lemma}\label{lem:fibertoC}
Let $K$ be an algebra as well as a coalgebra in a braided monoidal category $\C$. Suppose
${}_K\C$ is a monoidal category with tensor product bifunctor defined as follows:
For any $V,W\in{}_K\C$, their tensor product is the object $V\otimes W\in\C$ with left $K$-action
$$p_{V\otimes W}
:=
\ArxivFigure{0274}.$$
If the forgetful functor $\mathbf{U}:{}_K\C\rightarrow\C$ is (strictly) monoidal, then $K$ is a bialgebra in $\C$.
\end{lemma}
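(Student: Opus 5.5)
The idea is to test the monoidal structure of ${}_K\C$ on the regular module $(K,m_K)$ and on the unit object, and read off the bialgebra axioms from the resulting module axioms. Concretely, we must show that $\Delta_K$ and $\e_K$ are algebra morphisms, i.e.
$$\Delta_K\circ m_K=(m_K\otimes m_K)\circ(\id_K\otimes c_{K,K}\otimes\id_K)\circ(\Delta_K\otimes\Delta_K),\quad \Delta_K\circ u_K=u_K\otimes u_K,$$
$$\e_K\circ m_K=\e_K\otimes\e_K,\quad \e_K\circ u_K=\id_\1 .$$

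\textbf{Comultiplication.} Since the forgetful functor $\mathbf{U}$ is strictly monoidal, the tensor product of $(K,m_K)$ with itself is the object $K\otimes K$ with the action $p_{K\otimes K}$ displayed in the statement. Precomposing $p_{K\otimes K}$ with $\id_K\otimes u_K\otimes u_K$ and using the unit axiom of $m_K$ together with naturality of $c$ (to move $u_K$ through the braiding) gives
$$p_{K\otimes K}\circ(\id_K\otimes u_K\otimes u_K)=\Delta_K .$$
Now $p_{K\otimes K}$ is a left $K$-action, so
$$p_{K\otimes K}\circ(m_K\otimes\id_{K\otimes K})=p_{K\otimes K}\circ(\id_K\otimes p_{K\otimes K}).$$
Precompose this identity with $\id_K\otimes\id_K\otimes u_K\otimes u_K$. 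The left side becomes $\Delta_K\circ m_K$. The right side becomes $p_{K\otimes K}\circ(\id_K\otimes\Delta_K)$, which unfolds, by the defining formula of $p_{K\otimes K}$ with $p_K=m_K$, to exactly the braided product $(m_K\otimes m_K)\circ(\id_K\otimes c_{K,K}\otimes\id_K)\circ(\Delta_K\otimes\Delta_K)$. Likewise, the unit axiom $p_{K\otimes K}\circ(u_K\otimes\id)=\id$, precomposed with $u_K\otimes u_K$, yields $\Delta_K\circ u_K=u_K\otimes u_K$.

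\textbf{Counit.} The unit object of ${}_K\C$ is $(\1,p_\1)$ for some action $p_\1:K\to\1$. Strictness of $\mathbf{U}$ means the unit constraints are identities, so $V\otimes\1=V$ as $K$-modules, i.e.
$$p_V=(p_V\otimes p_\1)\circ(\id_K\otimes c_{K,V})\circ(\Delta_K\otimes\id_V).$$
Take $V=K$ with $p_V=m_K$, precompose with $\id_K\otimes u_K$, and postcompose with $\e_K$. Using the counit axiom of $\Delta_K$ and naturality of $c$ with respect to $u_K$, the left side becomes $\e_K$ and the right side becomes $p_\1$. Hence $p_\1=\e_K$. The module axioms for $(\1,p_\1)$ then read exactly $\e_K\circ m_K=\e_K\otimes\e_K$ and $\e_K\circ u_K=\id_\1$.

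\textbf{Main obstacle.} There is no real difficulty; the only point needing care is the counit step. One must use that the unit constraints are identities, which is what ``strictly monoidal'' provides, in order to identify the unknown $p_\1$ with $\e_K$. The remaining steps are routine graphical manipulations using naturality of the braiding with respect to $u_K$.
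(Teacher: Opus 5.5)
Your argument is correct and is the routine verification the paper leaves out when it calls this lemma immediate; the paper writes out no proof. The module axioms of $p_{K\otimes K}$ on the regular module, precomposed with $\id_K\otimes u_K\otimes u_K$, give multiplicativity and unitality of $\Delta_K$, and $K$-linearity of the unit constraints forces $p_\1=\e_K$, whose module axioms are the two counit conditions. (Only preservation of the unit constraints by $\mathbf{U}$ is used, not of the associativity constraints.)
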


\begin{proposition}\label{prop:monoidaliso1}
Let $(B,D)$ be a matched pair, and let $(B,B^\ast)$ be a braided dual pair in a braided monoidal category $\C$.
Then:
\begin{itemize}
\item[(1)]
There is an isomorphism $\Phi:{}_D\C^B\cong{}_{D\times B^\ast}\C$ of categories such that
$\Phi(V):=V$ is a left $D\times B^\ast$-module with structure
$$p_V=\ArxivFigure{0275}$$
for each $V\in{}_{D\times B^\ast}\C$.

\item[(2)]
If $(B,D)$ is a Hopf matched pair, then
${}_{D\times B^\ast}\C$ is a monoidal category, and
$\Phi$ is an isomorphism of monoidal categories satisfying $\mathbf{U}\circ\Phi^{-1}$ is (strictly) monoidal.
\end{itemize}
\end{proposition}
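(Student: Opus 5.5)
We obtain $\Phi$ by transporting the right $B$-coaction to a left $B^\ast$-action via the pairing $\omega$, and recover the coaction from the action via the copairing $\omega'$. Concretely, for $(V,\mu_V,\nu_V)\in{}_D\C^B$ we define the left $B^\ast$-action $\lambda_V:B^\ast\otimes V\to V$ by first applying $\nu_V$ to get $B^\ast\otimes V\otimes B$. We then braid $B^\ast$ past $V$ and apply $\omega$ to $B^\ast\otimes B$. The displayed $p_V$ is then $\mu_V\circ(\id_D\otimes\lambda_V)$ on $D\otimes B^\ast\otimes V$. Conversely, given a left $D\times B^\ast$-module $(V,p_V)$, we restrict along $u_{B^\ast}$ and $u_D$ to obtain a $D$-action $\mu_V$ and a $B^\ast$-action $\lambda_V$. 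We then put $\nu_V:=(\lambda_V\otimes\id_B)\circ(\text{braiding})\circ(\omega'\otimes\id_V)$. The non-degeneracy condition of Definition \ref{def:brdualpair}(2) shows these two passages $\nu_V\leftrightarrow\lambda_V$ are mutually inverse. On morphisms $\Phi$ is the identity, and a morphism commuting with $\nu$ commutes with $\lambda$ and conversely, again by non-degeneracy. Hence it suffices to compare objects.

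For (1) there are three checks. First, $\lambda_V$ is associative and unital exactly when $\nu_V$ is coassociative and counital. This is the computation already made for $\mu_D$ in Lemma \ref{lem:newmodcomod}, and uses the braided Hopf pairing conditions (\ref{eqn:braidedHopfpair}) and (\ref{eqn:braidedHopfcopair}). Second, $p_V$ is a $D\times B^\ast$-action if and only if it is compatible with $m_{D\times B^\ast}$. By the analogue of Lemma \ref{lem:crossprodalg} for the matched pair $(D,B^\ast)$ of Proposition \ref{prop:rightpartialdual1}, in particular (\ref{eqn:B11Dact}), a left $D\times B^\ast$-module is the same as a $D$-module and a $B^\ast$-module satisfying one exchange relation. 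This relation says that $p_V\circ(u_D\otimes\id_{B^\ast}\otimes\id_V)$ followed by the $D$-action agrees with the twisted product $m_{D\times B^\ast}$ applied to $B^\ast\otimes D$. Third, we must show this exchange relation is equivalent to the Doi-Hopf compatibility defining ${}_D\C^B$. We would paste $\omega$ onto the Doi-Hopf equation and simplify using the definitions of $\mu_{B^\ast},\nu_{B^\ast}$ in Lemma \ref{lem:newmodcomod} and the mixed identities (\ref{eqn:mumu&nunu}) and (\ref{eqn:munu&numu}). For the converse direction we paste $\omega'$ and use non-degeneracy. This third step is the main obstacle: the twisted multiplication of $D\times B^\ast$ involves all four new structures, so the graphical bookkeeping of braidings is delicate. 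I would first reduce both sides to the form $\mu_V\circ(\cdot)\circ(\id\otimes\lambda_V)$ before comparing.

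For (2), assume $(B,D)$ is a Hopf matched pair. By Proposition \ref{prop:DHmodtensorprod}, ${}_D\C^B$ is monoidal with underlying tensor product in $\C$ and unit $\1$. We transport this structure along the isomorphism $\Phi$, so ${}_{D\times B^\ast}\C$ becomes monoidal and $\Phi$ is a strict monoidal isomorphism by construction. Since $\Phi$ is the identity on underlying objects, $\mathbf{U}\circ\Phi^{-1}$ is strictly monoidal. It remains to check that the transported action on $V\otimes W$ has the form $p_{V\otimes W}$ of Lemma \ref{lem:fibertoC}, namely $(p_V\otimes p_W)$ precomposed with $\Delta_{D\times B^\ast}$ and a braiding. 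This is not logically needed for the stated claim, but it prepares the application of Lemma \ref{lem:fibertoC}. We would verify it by expanding $\mu_{V\otimes W}$ and $\nu_{V\otimes W}$ from Proposition \ref{prop:DHmodtensorprod}, converting $\nu_{V\otimes W}$ to a $B^\ast$-action via $\omega$, and recognizing $\Delta_{B^\ast}$ from (\ref{eqn:braidedHopfpair}) and $\Delta_{D\times B^\ast}$ from Lemma \ref{lem:crossprod0}.
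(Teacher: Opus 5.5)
Your proposal is correct and is essentially the paper's proof. You use the same $\Phi$ built from $\omega$, the same inverse obtained by restricting along the units and recovering the coaction with $\omega'$, and the same Doi--Hopf/module equivalence checked through Lemma~\ref{lem:munumixed} and (\ref{eqn:B11Dact}). Your factorization of $D\times B^\ast$-modules into a $D$-action, a $B^\ast$-action and an exchange relation just packages the paper's two separate computations (associativity of $p_V$, and the Doi--Hopf compatibility of $\overline{\Phi}(V,p_V)$) into one equivalence. This step is also lighter than you expect: only the actions $\mu_D,\mu_{B^\ast}$ of Lemma~\ref{lem:newmodcomod} enter $m_{D\times B^\ast}$, not all four structures.

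The one point to reconsider is (2). You call the agreement of the transported tensor product with the $\Delta_{D\times B^\ast}$-formula ``not logically needed'', but it is precisely what the paper proves there: equation (\ref{eqn:Phimonoidal}), checked on $D\times 1$ and $1\times B^\ast$. Without it, (2) is automatic and gives nothing toward the hypothesis of Lemma~\ref{lem:fibertoC} for $K=D\times B^\ast$.
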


\begin{proof}
(2) is a direct consequence of (1).

At first we check that $p_V$ a left $D\times B^\ast$-module structure for
$V\in{}_{D\times B^\ast}\C$.
 In fact, one could use Lemma \ref{lem:munumixed}(1) to calculate:
\begin{eqnarray*}
&&
\ArxivFigure{0276}
=
\ArxivFigure{0277}
=
\ArxivFigure{0278}
\\
&&=
\ArxivFigure{0279}
=
\ArxivFigure{0280}
\\
&&=
\ArxivFigure{0281}
=
\ArxivFigure{0282}
=
\ArxivFigure{0283},
\end{eqnarray*}
where the penultimate equality is due to the compatibility relation of $V$ as an abstract Doi-Hopf module.

Conversely,
for each $(V,p_V)\in{}_{D\times B^\ast}\C$, define $\overline{\Phi}(V,p_V)=(V,\mu_V,\nu_V)$, where:
$$
\mu_V=\ArxivFigure{0284}
\;\;\;\;\text{and}\;\;\;\;
\nu_V=\ArxivFigure{0285}.$$

Then by applying Lemma \ref{lem:crossprodalg}(1) to $D\times B^\ast$, one could find that
$(V,\mu_V)\in{}_D\C$. Similarly, $V$ has a left $B^\ast$-action
$\ArxivFigure{0286}$, and hence $\nu_V$ is a right $B$-comodule structure.

Furthermore, $(V,\mu_V,\nu_V)$ is an abstract Doi-Hopf module in $\C$. In fact, we have following calculations:
\begin{eqnarray*}
&&
\ArxivFigure{0287}
=
\ArxivFigure{0288}
=
\ArxivFigure{0289}
=
\ArxivFigure{0290}
\\
&&=
\ArxivFigure{0291}
=
\ArxivFigure{0292}
=
\ArxivFigure{0293}
\\
&&
\overset{(\ref{eqn:B11Dact})}{=}
\ArxivFigure{0294}
=
\ArxivFigure{0295}
=
\ArxivFigure{0296}.
\end{eqnarray*}

Moreover, it is straightforward to find that the functors $\Phi$ and $\overline{\Phi}$ are mutually inverse.

Finally, we aim to show that $\Phi(V)\otimes\Phi(W)=\Phi(V\otimes W)$ as $D\times B^\ast$-modules in $\C$ for any $V,W\in{}_D\C^B$, namely,
\begin{equation}\label{eqn:Phimonoidal}
\ArxivFigure{0297}
=
\ArxivFigure{0298}.
\end{equation}
However, ccording to Lemma \ref{lem:crossprodalg}(2), it suffices to check that $\Phi(V)\otimes\Phi(W)$ and $\Phi(V\otimes W)$ have the same $D\times 1$-actions and $1\times B$-actions.
In fact, we have calculations:
\begin{eqnarray*}
&&
\ArxivFigure{0299}
=
\ArxivFigure{0300}
=
\ArxivFigure{0301}
\\
&&=
\ArxivFigure{0302}
=
\ArxivFigure{0303}
=
\ArxivFigure{0304},
\end{eqnarray*}
as well as
\begin{eqnarray*}
&&
\ArxivFigure{0305}
=
\ArxivFigure{0306}
=
\ArxivFigure{0307}
\\
&&=
\ArxivFigure{0308}
=
\ArxivFigure{0309}
=
\ArxivFigure{0310},
=
\ArxivFigure{0311}.
\end{eqnarray*}
\end{proof}

\begin{corollary}\label{cor:rightpartialdual2}
Let $(B,D)$ be a Hopf matched pair, and let $(B,B^\ast)$ be a braided dual pair in a braided monoidal category $\C$. Then:
\begin{itemize}
\item[(1)]
The generalized smash biproduct $D\times B^\ast$ is a bialgebra in $\C$;
\item[(2)]
If $\id_B$ and $\id_D$ are both convolution invertible, then $D\times B^\ast$ is Hopf algebra in $\C$.
\end{itemize}
\end{corollary}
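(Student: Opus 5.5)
For part (1), I would combine Proposition \ref{prop:monoidaliso1}(2) with Lemma \ref{lem:fibertoC}. By Proposition \ref{prop:rightpartialdual1}, $D\times B^\ast$ is already an algebra and a coalgebra in $\C$. Proposition \ref{prop:monoidaliso1}(2) shows that ${}_{D\times B^\ast}\C$ is monoidal with the underlying tensor product of $\C$, transported from ${}_D\C^B$ via $\Phi$. The forgetful functor $\mathbf U$ is strictly monoidal, since $\mathbf U\circ\Phi^{-1}$ is and $\Phi$ is the identity on objects.

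What still has to be checked is that the transported $D\times B^\ast$-action on $V\otimes W$ is exactly $p_{V\otimes W}$ from Lemma \ref{lem:fibertoC}, i.e. the formula built from $\Delta_{D\times B^\ast}$ and the braiding. This is Equation (\ref{eqn:Phimonoidal}), verified in the proof of Proposition \ref{prop:monoidaliso1}. I would also confirm that the unit object $(\1,\e_D,u_B)$ corresponds to the action $\e_{D\times B^\ast}$. For this, note that $\Phi(\1)$ has action $\e_D\otimes\omega(u_B\otimes-)$, which equals $\e_D\otimes\e_{B^\ast}$ by the pairing condition (\ref{eqn:braidedHopfpair}). Lemma \ref{lem:fibertoC} then gives that $D\times B^\ast$ is a bialgebra.

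For part (2), I would apply Lemma \ref{lem:crossprod}(2) to the pair $(D,B^\ast)$. Two things are needed. First, $(D,B^\ast)$ must be a Hopf matched pair. Since it is a matched pair (Proposition \ref{prop:rightpartialdual1}) and $D\times B^\ast$ is a bialgebra by part (1), the equivalence of bialgebra admissibility with the Hopf matched pair conditions (\cite[Theorem 5.4]{BCT13}) applies. Alternatively, one can check one of the four equations in Definition \ref{def:matchedpair}(2) directly. To do so, translate it through $\omega,\omega'$ and Lemma \ref{lem:munumixed} into the corresponding equation for $(B,D)$.

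Second, $\id_{B^\ast}$ must be convolution invertible. I would take $S_{B^\ast}$ to be the dual of $S_B$, namely
\[
S_{B^\ast}=(\omega\otimes\id_{B^\ast})\circ(\id_{B^\ast}\otimes S_B\otimes\id_{B^\ast})\circ(\id_{B^\ast}\otimes\omega').
\]
The pairing conditions (\ref{eqn:braidedHopfpair}) and (\ref{eqn:braidedHopfcopair}) convert the multiplication and comultiplication of $B^\ast$ into the comultiplication and multiplication of $B$, with appropriate braidings. So the identity $m_{B^\ast}(S_{B^\ast}\otimes\id)\Delta_{B^\ast}=u\e$ dualizes to $m_B(S_B\otimes\id)\Delta_B=u\e$, possibly with the order reversed depending on how the braiding enters. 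If the order is reversed, the other-sided inverse identity of $S_B$ is used instead; since $S_B$ is a two-sided convolution inverse, either version works. Non-degeneracy, via the snake identities, then yields the required equality.

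The main obstacle is this dualization of $S_B$, because the pairing twists the multiplication by the braiding. I would first settle, in graphs, whether $\omega\circ(m_{B^\ast}\otimes\id_B)$ corresponds to $\Delta_B$ or to $\Delta_B$ composed with $c$. Convolution invertibility of $\id_D$ is given by hypothesis, so Lemma \ref{lem:crossprod}(2) then shows that $D\times B^\ast$ is a Hopf algebra.
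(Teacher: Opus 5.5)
Your proposal is correct, and for part (2) it follows the paper: Lemma \ref{lem:crossprod}(2) is applied to $(D,B^\ast)$, with $S_{B^\ast}$ the transpose of $S_B$ through $\omega,\omega'$, checked by pairing against $B$ and using non-degeneracy.

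For part (1) your route differs from what the paper writes. The paper's proof says only that the claim ``is due to Lemma \ref{lem:crossprod}(1)''. Applied to $(D,B^\ast)$, that lemma tacitly requires $(D,B^\ast)$ to be a Hopf matched pair, i.e.\ to satisfy one of the equations of Definition \ref{def:matchedpair}(2). The paper never checks this; it would have to be done by dualizing the corresponding equation for $(B,D)$, in the style of Proposition \ref{prop:rightpartialdual1}.

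You instead use the reconstruction Lemma \ref{lem:fibertoC}, fed by Proposition \ref{prop:monoidaliso1}(2) and Equation (\ref{eqn:Phimonoidal}). Since $\Phi$ is bijective on objects, every $D\times B^\ast$-module is some $\Phi(V)$. So the action on $V\otimes W$ defined via $\Delta_{D\times B^\ast}$ and the braiding is always a genuine action, and the forgetful functor is strictly monoidal. This is evidently the argument the subsection is set up for: Lemma \ref{lem:fibertoC} is otherwise unused. It gives the bialgebra property with no new diagrammatic verification. Its cost is that it rests on the monoidal isomorphism $\Phi$, and hence on Proposition \ref{prop:DHmodtensorprod}, which is where the Hopf matched pair hypothesis on $(B,D)$ enters.

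Your route also closes a small gap in (2). Lemma \ref{lem:crossprod}(2) formally needs $(D,B^\ast)$ to be a Hopf matched pair. You obtain this from part (1) via the converse direction of \cite[Theorem 5.4]{BCT13}, whereas the paper passes over the point.
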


\begin{proof}
The first claim is due to Lemma \ref{lem:crossprod}(1).

Moreover, the second claim is a consequence of Lemma \ref{lem:crossprod}(2). Specifically, let $S_B$ be the convolution inverse of $\id_B$. Then it is straightforward to check that
$\ArxivFigure{0312}$
is the convolution inverse of $\id_{B^\ast}$, and hence $D\times B^\ast$ is also a Hopf algebra.
\end{proof}

\begin{definition}
Let $(B,D)$ be a matched pair, and let $(B,B^\ast)$ be a braided dual pair in a braided monoidal category $\C$.
The generalized smash biproduct $D\times B^\ast$ is called the right partial dual of  $B\times D$.
\end{definition}

\begin{remark}
If $\C$ is the category of finite-dimensional vector spaces over a filed $\k$, then $D\times B^\ast$ is exactly the right partially dualized Hopf algebra $D\btd B^\ast$ of the Hopf algebra $B\times D$ over $\k$ introduced in \cite{Li23}.
\end{remark}

\subsection{Smash biproduct description}

\begin{lemma}\label{lem:monoidaliso2}
Let $(B,D)$ be a Hopf matched pair, and let $(B,B^\ast)$ be a braided dual pair in a braided monoidal category $\C$.
Then there exists an isomorphism
$\Psi:{}^D\C_B\cong{}^{D\times B^\ast}\C$ of monoidal categories, such that $\mathbf{U}\circ\Psi^{-1}$ is (strictly) monoidal.
\end{lemma}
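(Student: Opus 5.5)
The plan is to dualize the proof of Proposition \ref{prop:monoidaliso1}, using the 180-degree rotation symmetry of the Bespalov-Drabant list already noted before Proposition \ref{prop:DHmodtensorprod2}.

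\textbf{Step 1: the functor.} For $(V,\mu'_V,\nu'_V)\in{}^D\C_B$, with right $B$-action $\mu'_V$ and left $D$-coaction $\nu'_V$, first turn the right $B$-action into a left $B^\ast$-coaction. To do this, apply $\omega'$ to produce a copy of $B^\ast\otimes B$, act on $V$ by the $B$-leg, and braid so that the $B^\ast$-leg ends up on the left. Then define the left $D\times B^\ast$-coaction $\Psi(V)$ as the composite of this $B^\ast$-coaction with $\nu'_V$, placing the $D$-leg first. This is the rotated analogue of the action $p_V$ in Proposition \ref{prop:monoidaliso1}(1).

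\textbf{Step 2: coassociativity and counit.} To check that this is a $D\times B^\ast$-comodule, use the comultiplication of $D\times B^\ast$ from Lemma \ref{lem:crossprod0}, applied to the matched pair $(D,B^\ast)$ of Proposition \ref{prop:rightpartialdual1}. The mixed terms are rewritten with Lemma \ref{lem:munumixed}, in particular (\ref{eqn:munu&numu}), which converts the coaction $\nu_{B^\ast}$ into data involving $\nu_D$ on $B$. The compatibility axiom of right-left abstract Doi-Hopf modules then lets the $D$-coaction pass through the $B$-action.

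\textbf{Step 3: the inverse.} Given a $D\times B^\ast$-comodule, obtain the $D$-coaction by applying $\id_D\otimes\e_{B^\ast}$. Obtain the $B^\ast$-coaction by applying $\e_D\otimes\id_{B^\ast}$, and convert it back into a right $B$-action using $\omega$. Equation (\ref{eqn:B11Dcoact}) of Lemma \ref{lem:crossprodalg}, applied to $D\times B^\ast$, shows that these really are comodule and module structures, and that the full coaction is recovered from the two pieces. Running the Doi-Hopf compatibility computation backwards shows the pieces satisfy it. The non-degeneracy conditions of the braided dual pair show that the two constructions are mutually inverse; on morphisms both functors are the identity.

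\textbf{Step 4: monoidality.} This is the main obstacle. Assume $(B,D)$ is a Hopf matched pair. Then ${}^D\C_B$ is monoidal by Proposition \ref{prop:DHmodtensorprod2}, and ${}^{D\times B^\ast}\C$ is monoidal by Corollary \ref{cor:rightpartialdual2}(1). One must check that $\Psi(V\otimes W)=\Psi(V)\otimes\Psi(W)$ strictly, where the right-hand side uses the multiplication of $D\times B^\ast$. By the dual of the reduction in Lemma \ref{lem:crossprodalg}, i.e. (\ref{eqn:B11Dcoact}), it suffices to compare the $D$-parts and the $B^\ast$-parts of the coactions separately. The $D$-part is immediate from the definition of the tensor product in ${}^D\C_B$. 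For the $B^\ast$-part, use the braided Hopf pairing (\ref{eqn:braidedHopfpair}) to turn $m_{B^\ast}$ into $\Delta_B$. This should reproduce the rotated form of the diagram computation for (\ref{eqn:Phimonoidal}). Strictness of $\mathbf{U}\circ\Psi^{-1}$ then follows because both functors are the identity on underlying objects and morphisms, and the unit objects match: $\1$ carries the trivial $B$-action given by $\e_B$ and the trivial $D$-coaction given by $u_D$.
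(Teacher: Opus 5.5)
Your proposal is correct and follows the paper's own route. The paper's proof just declares the lemma analogous to Proposition~\ref{prop:monoidaliso1} and writes down $\Psi$ (the $D$-coaction followed by the $B^\ast$-coaction obtained from the right $B$-action via $\omega'$) and its inverse (split off the two pieces with $\e_{B^\ast}$ and $\e_D$, then turn the $B^\ast$-part back into a right $B$-action via $\omega$), exactly as in your Steps~1 and~3, and leaves the comodule and monoidality checks as rotated versions of the module computations you outline.

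The only blemishes are cosmetic:
\begin{itemize}
\item[(a)] $\omega'$ produces $B\otimes B^\ast$, not $B^\ast\otimes B$.
\item[(b)] The coassociativity check is a comodule computation of the kind done in Lemma~\ref{lem:OmegaMmodcomod}, so it presumably rests on the coaction--coaction identity of Lemma~\ref{lem:munumixed} rather than the mixed one.
\item[(c)] The $D$-part in Step~4 is not literally immediate. Projecting $m_{D\times B^\ast}$ onto $D$ brings in the $B^\ast$-action on $D$, so a short pairing computation is needed, as for the $D\times 1$-part in the proof of Proposition~\ref{prop:monoidaliso1}.
\end{itemize}
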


\begin{proof}
This is analogous to the proof of Proposition \ref{prop:monoidaliso1}, while the mutually inverse functors are given as follows.

For each $W\in{}^D\C_B$, define $\Psi(W)=W$ with left $D\times B^\ast$-coaction
$$\ArxivFigure{0313}.$$

Conversely,
for each $(W,q_W)\in{}^{D\times B^\ast}\C$, denote
$$
\nu'_{W}=
\ArxivFigure{0314}
\;\;\;\;\text{and}\;\;\;\;
\mu'_{W}=
\ArxivFigure{0315}.
$$
Then one could directly verify that $(W,\nu'_W,\mu'_W)\in{}^D\C_B$.
\end{proof}

\begin{proposition}\label{prop:Kmodcomodstru}
Let $(B,D)$ be a Hopf matched pair in a braided monoidal category $\C$.
Suppose $(B,B^\ast)$ is a braided dual pair, and denote by $K:=D\times B^\ast$ the generalized smash biproduct Hopf algebra.
Then:
\begin{itemize}
\item[(1)]
$B$ is a left $K$-module algebra with action
$p_B=\ArxivFigure{0316}$,
and $D$ is a left $K$-comodule algebra with coaction
$q_D=\ArxivFigure{0317}$;

\item[(2)]
$B$ is a left $K$-comodule coalgebra with coaction
$q_B=\ArxivFigure{0318}$,
and $D$ is a left $K$-module coalgebra with action
$p_D=\ArxivFigure{0319}$.
\end{itemize}
\end{proposition}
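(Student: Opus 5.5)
The plan is to reduce everything to the two monoidal isomorphisms already established: $\Phi:{}_D\C^B\cong{}_{K}\C$ of Proposition \ref{prop:monoidaliso1} and $\Psi:{}^D\C_B\cong{}^{K}\C$ of Lemma \ref{lem:monoidaliso2}. Both satisfy that $\mathbf{U}\circ\Phi^{-1}$ and $\mathbf{U}\circ\Psi^{-1}$ are strictly monoidal. Corollary \ref{cor:algcoalginDHmod} then supplies the algebra and coalgebra structures inside these categories.

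For (1), start with $B\in{}_D\C^B$, with $D$-action $\mu_B$ and coaction $\Delta_B$. By Corollary \ref{cor:algcoalginDHmod}(1), $(B,m_B,u_B)$ is an algebra in ${}_D\C^B$. Transport it along the strict monoidal isomorphism $\Phi$. Then $(B,m_B,u_B)$ is an algebra in ${}_K\C$, i.e. a left $K$-module algebra, with action $p_B=\Phi(B)$. Read off from the formula in Proposition \ref{prop:monoidaliso1}(1), this is the $D$-action $\mu_B$ composed with the $B^\ast$-action $(\id\otimes\omega)\circ(\Delta_B\otimes\ldots)$ obtained by pairing the $B$-coaction against $B^\ast$. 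The only real check is that this coincides with the picture defining $p_B$ in the statement, which I expect is immediate after unfolding the definitions.

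Similarly, Corollary \ref{cor:algcoalginDHmod}(1) says $(D,m_D,u_D)$ is an algebra in ${}^D\C_B$. Here $D$ carries the coaction $\Delta_D$ and the right $B$-action $\mu_D$. Applying $\Psi$ makes $D$ a left $K$-comodule algebra, with $q_D=\Psi(D)$ given by the coaction formula in the proof of Lemma \ref{lem:monoidaliso2}. This should match the stated $q_D$, which combines $\Delta_D$ with the $B$-action dualized through $\omega'$.

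For (2), use Corollary \ref{cor:algcoalginDHmod}(2) in the same way. $(B,\Delta_B,\e_B)$ is a coalgebra in ${}^D\C_B$, so $\Psi$ turns it into a left $K$-comodule coalgebra with coaction $q_B=\Psi(B)$. $(D,\Delta_D,\e_D)$ is a coalgebra in ${}_D\C^B$, so $\Phi$ turns it into a left $K$-module coalgebra with action $p_D=\Phi(D)$. Strictness of the monoidal structure guarantees that the same morphisms $m$, $u$, $\Delta$, $\e$ serve as the structure maps after transport. It also ensures that the tensor product actions and coactions on $B\otimes B$ and $D\otimes D$ in ${}_K\C$ and ${}^K\C$ are the diagonal ones built from $\Delta_K$ and $m_K$ twisted by the braiding, exactly as required for module and comodule (co)algebras.

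The main obstacle is the identification step. The categories ${}_K\C$ and ${}^K\C$ carry a tensor product defined via the bialgebra structure of $K$ from Corollary \ref{cor:rightpartialdual2}. One must confirm that this tensor product is the one transported from ${}_D\C^B$ and ${}^D\C_B$, i.e. equation (\ref{eqn:Phimonoidal}) and its comodule analogue. That is what Proposition \ref{prop:monoidaliso1}(2) and Lemma \ref{lem:monoidaliso2} assert, so I would cite them. If a gap remains in the comodule case, I would verify the analogue of (\ref{eqn:Phimonoidal}) by checking it separately on the $D$-component and the $B^\ast$-component, using (\ref{eqn:B11Dcoact}) from Lemma \ref{lem:crossprodalg}.
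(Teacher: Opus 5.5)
Your proposal is correct and follows the paper's route: it transports the (co)algebras of Corollary \ref{cor:algcoalginDHmod} along the strict monoidal isomorphisms $\Phi:{}_D\C^B\cong{}_K\C$ of Proposition \ref{prop:monoidaliso1}(2) and $\Psi:{}^D\C_B\cong{}^K\C$ of Lemma \ref{lem:monoidaliso2}. The paper's written proof records only the module/comodule part and invokes Corollary \ref{cor:algcoalginDHmod} later, at the start of the proof of Theorem \ref{thm:cross=smash}; your version is, if anything, more complete, and it correctly assigns $D$'s comodule-algebra structure to $\Psi$ and its module-coalgebra structure to $\Phi$, which the paper's "(1) via $\Phi$, (2) via $\Psi$" split blurs.
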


\begin{proof}
\begin{itemize}
\item[(1)]
Recall that $(B,\mu_B,\Delta_B)\in{}_D\C^B$, whose image under $\Phi$ is
$(B,p_B)$. It follows that $(B,p_B)\in{}_{D\times B^\ast}\C$
according to Proposition \ref{prop:monoidaliso1}(2) that
$\Phi:{}_D\C^B\cong{}_{D\times B^\ast}\C$ is an isomorphism of monoidal categories,

\item[(2)]
Similar by Lemma \ref{lem:monoidaliso2}.

\end{itemize}
\end{proof}

\begin{theorem}\label{thm:cross=smash}
Let $(B,D)$ be a Hopf matched pair, and let $(B,B^\ast)$ be braided dual pair in a braided monoidal category $\C$.
Then the structures in Proposition \ref{prop:Kmodcomodstru} make $B\#D$ a left $K$-smash biproduct, which coinsides with the generalized smash biproduct $B\times D$.
\end{theorem}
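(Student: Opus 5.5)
The proof reduces to comparing structure maps on $B\otimes D$. By Proposition \ref{prop:Kmodcomodstru}, $B$ is a left $K$-module algebra and a left $K$-comodule coalgebra, and $D$ is a left $K$-comodule algebra and a left $K$-module coalgebra. So Definition \ref{def:smashprodcoprod} already gives a smash product algebra $B\#D$ and a smash coproduct coalgebra $B\#D$ on the object $B\otimes D$. It then suffices to prove
\[
m_{B\#D}=m_{B\times D},\quad u_{B\#D}=u_{B\times D},\quad \Delta_{B\#D}=\Delta_{B\times D},\quad \e_{B\#D}=\e_{B\times D}.
\]
Once these hold, $B\#D$ inherits the bialgebra structure of $B\times D$ from Lemma \ref{lem:crossprod}(1), and is therefore a left $K$-smash biproduct bialgebra in the sense of Definition \ref{def:smashbiprod}.

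For the multiplication, I would substitute the definitions $p_B$ and $q_D$ from Proposition \ref{prop:Kmodcomodstru} into $m_{B\#D}$. The coaction $q_D$ followed by $p_B$ acting on the middle factor $B$ gives an expression in which $D\times B^\ast$ acts on $B$. Via $\Phi$ of Proposition \ref{prop:monoidaliso1}, this action is the $D$-action $\mu_B$ composed with a $B^\ast$-action built from $\Delta_B$ and the pairing $\omega$. The $K$-coaction on $D$ splits into the $D$-part $\Delta_D$ and the $B^\ast$-part coming from $\nu_D$ through the copairing $\omega'$. The $B^\ast$-factors are then contracted, using the pairing conditions (\ref{eqn:braidedHopfpair}), (\ref{eqn:braidedHopfcopair}) and the non-degeneracy (zig-zag) condition of Definition \ref{def:brdualpair}. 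After the contraction, the $B^\ast$ components cancel and leave the right $B$-coaction of $D$ feeding into $m_B$, together with $\mu_B$. What remains should be exactly the diagram $m_{B\times D}$ of Lemma \ref{lem:crossprod0}. Where an intermediate diagram needs it, the mixed identities of Lemma \ref{lem:munumixed} and Equation (\ref{eqn:B11Dact}) handle the reordering of the $D$- and $B^\ast$-parts.

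The comultiplication is dual: the $180$-degree rotation of the same computation with $q_B$ and $p_D$ from Lemma \ref{lem:monoidaliso2}, where the same zig-zag cancellation yields $\Delta_{B\times D}$. The units and counits agree directly. Indeed $u_{B\#D}=u_B\otimes u_D=u_{B\times D}$ and $\e_{B\#D}=\e_B\otimes\e_D=\e_{B\times D}$ by Corollary \ref{cor:mult&comult} and the normalization axioms (i)--(iii) of Definition \ref{def:matchedpair}.

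The main obstacle is the bookkeeping in the multiplication step. One must check that the braidings produced by the smash product formula, namely $c_{K,B}$ passing the coacting factor of $D$ across $B$, line up with the crossings in the definitions of $\mu_D$, $\nu_D$, $\mu_{B^\ast}$ and $\nu_{B^\ast}$ from Lemma \ref{lem:newmodcomod}. Only then does the zig-zag identity apply cleanly. I would first simplify using (\ref{eqn:B11Dact}) to reduce the $K$-action on $B$ to its $D\times 1$ and $1\times B^\ast$ parts, handle each part separately, and then apply naturality of the braiding to slide the evaluation/coevaluation pair together before invoking non-degeneracy.
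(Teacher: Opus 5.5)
Your proposal is correct and follows essentially the paper's route. You take the $K$-(co)module (co)algebra structures of Proposition \ref{prop:Kmodcomodstru} (the paper obtains them from Corollary \ref{cor:algcoalginDHmod} transported along $\Phi$ and $\Psi$), check graphically that $m_{B\#D}=m_{B\times D}$ and $\Delta_{B\#D}=\Delta_{B\times D}$, and note that units and counits agree by definition.

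One bookkeeping correction to your description of the multiplication step. After the $\omega'$--$\omega$ zig-zag cancellation, what survives is $\Delta_B$ on the second $B$-factor, whose right leg enters the right $B$-\emph{action} on $D$. The output of that action goes into $m_D$, and it sits alongside $\Delta_D$ and $\mu_B$, whose output goes into $m_B$. The right $B$-coaction of $D$ does not appear there; it enters only in the comultiplication comparison, through $p_D$.
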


\begin{proof}
Firstly, we know by Corollary \ref{cor:algcoalginDHmod} and the monoidal isomorphisms $\Phi$ and $\Psi$ that:
\begin{itemize}
\item
$B$ is an algebra in ${}_K\C$ with $K$-action $p_B$,
and $D$ is an algebra in ${}^K\C$ with $K$-coaction $q_D$;

\item
$B$ is a coalgebra in ${}^K\C$ with $K$-coaction $q_B$,
and $D$ is a coalgebra in ${}_K\C$ with $K$-action $p_D$.
\end{itemize}

Then
our goal is to show that all the structures of $B\#D$ are the same with $B\times D$ as algebras and coalgebras.

Specifically, according to Definitions \ref{def:smashprodcoprod} and \ref{def:smashbiprod}, we know that the multiplication of $B\#D$ is
\begin{eqnarray*}
&&
m_{B\#D}
=
\ArxivFigure{0320}
=
\ArxivFigure{0321}
=
\ArxivFigure{0322}
\\
&&=
\ArxivFigure{0323}
=
\ArxivFigure{0324}
=
m_{B\times D},
\end{eqnarray*}
where the last equality is due to Lemma \ref{lem:crossprod}(1).

On the other hand, the comultilication of $B\#D$ is
\begin{eqnarray*}
&&
\Delta_{B\#D}
=
\ArxivFigure{0325}
=
\ArxivFigure{0326}
=
\ArxivFigure{0327}
\\
&&=
\ArxivFigure{0328}
=
\ArxivFigure{0329}
=\Delta_{B\times D},
\end{eqnarray*}
where the last equality is also due to Lemma \ref{lem:crossprod}(1).

As for unit and counit, it is straightforward to find that $u_{B\#D}=u_{B\times D}$ and $\e_{B\#D}=\e_{B\times D}$ by definitions.
\end{proof}

\begin{corollary}\label{cor:cross=smash2}
Under the assumptions in Theorem \ref{thm:cross=smash}, we have:
\begin{itemize}
\item[(1)]
The generalized smash biproduct $B\times D$ is a bialgebra if and only if it is a smash biproduct bialgebra;

\item[(2)]
If $\id_B$ and $\id_D$ are both convolution invertible, then
 $B\times D$ is a smash biproduct Hopf algebra over a Hopf algebra.
\end{itemize}
\end{corollary}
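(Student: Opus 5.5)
Corollary \ref{cor:cross=smash2} follows from Theorem \ref{thm:cross=smash} together with Lemma \ref{lem:crossprod}, Corollary \ref{cor:rightpartialdual2}, and the lemma preceding Lemma \ref{lem:crossprodalg}, which recovers a Hopf matched pair from a smash biproduct bialgebra.

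For (1), I will treat the two implications separately. The direction ``smash biproduct bialgebra $\Rightarrow$ bialgebra'' is immediate from Definition \ref{def:smashbiprod}: if $B\times D$ coincides, as an algebra and a coalgebra, with some $K$-smash biproduct $B\#D$ that is a bialgebra, then $B\times D$ is a bialgebra.

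For the converse, assume the generalized smash biproduct $B\times D$ is a bialgebra. The first step is to upgrade the matched pair $(B,D)$ to a Hopf matched pair, since Theorem \ref{thm:cross=smash} requires this.
\begin{itemize}
\item I would invoke the equivalence recalled from \cite{BD99} and \cite{BCT13} in the Introduction: for a matched pair, $B\times D$ is a bialgebra exactly when one of the equivalent equations in Definition \ref{def:matchedpair}(2) holds.
\item Concretely, I would compose the bialgebra compatibility $\Delta_{B\times D}\circ m_{B\times D}=(m\otimes m)\circ(\id\otimes c\otimes\id)\circ(\Delta\otimes\Delta)$ with the inclusions $u_B,u_D$ on the inputs and the counits $\e_B,\e_D$ on the outputs.
\item I would then use Corollary \ref{cor:mult&comult} and Lemma \ref{lem:crossprodalg} to simplify, extracting for instance the first equation of Definition \ref{def:matchedpair}(2).
\end{itemize}
With $(B,D)$ a Hopf matched pair, $K:=D\times B^\ast$ is a bialgebra by Corollary \ref{cor:rightpartialdual2}(1). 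Theorem \ref{thm:cross=smash} then identifies $B\times D$ with the left $K$-smash biproduct $B\#D$, and the latter is a bialgebra because $B\times D$ is.

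For (2), assume $(B,D)$ is a Hopf matched pair (the standing hypothesis of Theorem \ref{thm:cross=smash}) and that $\id_B,\id_D$ are convolution invertible.
\begin{itemize}
\item Lemma \ref{lem:crossprod}(2) shows that $B\times D$ is a Hopf algebra.
\item Corollary \ref{cor:rightpartialdual2}(2) shows that $K=D\times B^\ast$ is a Hopf algebra.
\item Theorem \ref{thm:cross=smash} identifies $B\times D$ with $B\#D$ over $K$, so $B\#D$ is a left $K$-smash biproduct Hopf algebra over the Hopf algebra $K$.
\end{itemize}

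The main obstacle is the reverse implication in (1): that the bialgebra property of $B\times D$ forces the Hopf matched pair condition. If the cited equivalence in \cite{BCT13} is not directly applicable, I would derive the condition by hand. Restricting $\Delta_{B\times D}\circ m_{B\times D}$ to $D\otimes B$, and applying $\id_D\otimes\e_B\otimes\e_D\otimes\id_B$ or a similar projection, should reproduce one side of an equation in Definition \ref{def:matchedpair}(2). The other side should come out of the product of coproducts, using the module-comodule compatibility (\ref{eqn:modcomodaxiom}) to rearrange the terms.
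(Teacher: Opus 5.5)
Your argument is correct and is essentially the paper's approach: the paper gives the corollary without a separate proof, as the immediate combination of Theorem \ref{thm:cross=smash} with Lemma \ref{lem:crossprod} and Corollary \ref{cor:rightpartialdual2}, which is what you do in (2) and in the last step of (1). The ``main obstacle'' you identify does not arise here, because the hypotheses of Theorem \ref{thm:cross=smash} already make $(B,D)$ a Hopf matched pair, so $B\times D$ is a bialgebra by Lemma \ref{lem:crossprod}(1) and both sides of (1) hold outright; your upgrade step relies on the converse from \cite{BD99,BCT13}, which the paper cites but does not prove, and it is needed only for the stronger matched-pair formulation stated as Theorem \ref{thm1} in the Introduction.
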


\section{Yetter-Drinfeld modules over generalized smash biproducts}\label{section5}

In this section, we show that any generalized smash biproduct $B\times D$ and its right partial dual $D\times B^\ast$ the have equivalent categories of (left-right) Yetter-Drinfeld modules. This could be regarded as a generalization of the results in \cite{HS13} and \cite{BLS15}.

\subsection{Yetter-Drinfeld modules over (generalized) smash biproduct algebras and coalgebras}

\begin{definition}(c.f. \cite[Figure 5a]{Bes97})\label{def:YDmod}
Let $\C$ be a braided monoidal category, and let $H$ be an algebra and a coalgebra in $\C$.
A (left-right) Yetter-Drinfeld module over $H$ in $\C$ is a triple $(M,p_M,r_M)$ satisfying $(M,p_M)\in{}_{H}\C$, $(M,r_M)\in\C^H$ and
\begin{equation}\label{eqn:YDcompatibility}
\ArxivFigure{0330}
=
\ArxivFigure{0331},
\end{equation}
where $p_M=\ArxivFigure{0332}$
 and $r_M=\ArxivFigure{0333}$.
Denote by ${}_H\YD(\C)^H$ the category of all (left-right) Yetter-Drinfeld module over $H$ in $\C$.
\end{definition}

%

Now we consider the case when $H=B\times D$ is the generalized smash biproduct.
One could infer from the compatibility condition (\ref{eqn:YDcompatibility}) to obtain the following formulas:

\begin{lemma}\label{lem:YDrestricted}
Let $(B,D)$ be a matched pair in a braided monoidal category. Suppose that
$(M,p_M)\in{}_{B\times D}\C$ and $(M,r_M)\in\C^{B\times D}$. Then
$(M,p_M,r_M)\in{}_{B\times D}\YD(\C)^{B\times D}$ if and only if the following four equations hold:
\begin{equation}\label{eqn:YDrestricted}
\ArxivFigure{0334}
=
\ArxivFigure{0335},
\;\;\;\;\;\;\;\;
\ArxivFigure{0336}
=
\ArxivFigure{0337},
\end{equation}
as well as
\begin{equation}\label{eqn:YDrestricted2}
\ArxivFigure{0338}
=
\ArxivFigure{0339},
\;\;\;\;\;\;\;\;
\ArxivFigure{0340}
=
\ArxivFigure{0341}.
\end{equation}
\end{lemma}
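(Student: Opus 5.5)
The idea is to reduce the Yetter--Drinfeld compatibility (\ref{eqn:YDcompatibility}) for $H=B\times D$ to its restrictions along the generating inclusions. Lemma \ref{lem:crossprodalg} supplies two factorizations:
$$m_{B\times D}\circ(\id_B\otimes u_D\otimes u_B\otimes\id_D)=\id_{B\times D}=(\id_B\otimes\e_D\otimes\e_B\otimes\id_D)\circ\Delta_{B\times D}.$$
The first says every element of $B\times D$ factors as a product of something from $B\otimes 1$ and something from $1\otimes D$. The second says the coaction $r_M$ is determined by its two components through $\id_B\otimes\e_D$ and $\e_B\otimes\id_D$. I expect the four equations (\ref{eqn:YDrestricted}) and (\ref{eqn:YDrestricted2}) to be (\ref{eqn:YDcompatibility}) precomposed with $u_D$ (resp.\ $u_B$) on the acting factor and postcomposed with the counit projection onto $B$ (resp.\ $D$) on the coacting factor. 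Each of these four combinations has been simplified using Corollary \ref{cor:mult&comult} and the formulas (\ref{eqn:B11Dact}) and (\ref{eqn:B11Dcoact}).

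\emph{Necessity.} Assume $(M,p_M,r_M)\in{}_{B\times D}\YD(\C)^{B\times D}$. I will compose (\ref{eqn:YDcompatibility}) with $u_D$ or $u_B$ on the $H$-input, and with $\e_D$ or $\e_B$ on the $H$-output. The unit and counit then pass through the multiplication and comultiplication of $B\times D$ by Corollary \ref{cor:mult&comult}. Afterwards, (\ref{eqn:B11Dact}) and (\ref{eqn:B11Dcoact}) rewrite the restricted actions and coactions of $M$ into the shapes appearing in the four equations. This direction is routine.

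\emph{Sufficiency.} This is the main step. Both sides of (\ref{eqn:YDcompatibility}) are maps $H\otimes M\to M\otimes H$, so I will show they agree in two stages.

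First, I would show that the set of $x$ satisfying (\ref{eqn:YDcompatibility}) after precomposition with $x$ is closed under multiplication. If both sides satisfy it on $K_1$ and on $K_2$, then they satisfy it on $m_H\circ(K_1\otimes K_2)$. The argument uses that $M$ is an $H$-module and an $H$-comodule and that $H=B\times D$ is an algebra and a coalgebra. However, $H$ need not be a bialgebra, so this closure is not automatic. I would instead carry it out with the explicit formula for $m_{B\times D}$ in Lemma \ref{lem:crossprod0} together with the matched pair axioms (iv) and (v) of Definition \ref{def:matchedpair}. Combined with the factorization of $\id_{B\times D}$ through $B\otimes1$ and $1\otimes D$, this reduces (\ref{eqn:YDcompatibility}) to its restrictions to $B\otimes1$ and $1\otimes D$.

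Second, for each such restriction I would use the counit factorization of $\id_{B\times D}$ to further split the $H$-output into its $B$- and $D$-components. The $B$-component and $D$-component of each side are then exactly the equations (\ref{eqn:YDrestricted}) and (\ref{eqn:YDrestricted2}), and the full morphism is recovered from its two components.

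The main obstacle is this closure under multiplication (or, dually, under comultiplication) without a bialgebra structure. I would try first to expand $p_M\circ(m_{B\times D}\otimes\id_M)$ as an iterated action, and then use the module--comodule compatibility (\ref{eqn:modcomodaxiom}) together with the four restricted equations to slide the $B$-part past the $D$-part on both sides.
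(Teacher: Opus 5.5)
Your necessity direction is fine, and so is the input reduction in Stage 1 (for the one factorization you need). The gap is in Stage 2, which rests on a false inference.

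\textbf{Why Stage 2 fails.} Put $\pi_B=\id_B\otimes\e_D$ and $\pi_D=\e_B\otimes\id_D$. A morphism $f\colon X\to M\otimes B\otimes D$ is not recovered from $(\id_M\otimes\pi_B)\circ f$ and $(\id_M\otimes\pi_D)\circ f$. Already in $\Vec$, an element of $B\otimes D$ is not determined by its two ``marginals''. The identity $\id_{B\times D}=(\pi_B\otimes\pi_D)\circ\Delta_{B\times D}$ recovers $f$ only from $(\id_M\otimes\pi_B\otimes\pi_D)\circ(\id_M\otimes\Delta_{B\times D})\circ f$. When $f$ is a side of (\ref{eqn:YDcompatibility}), its $H$-output is a product: a leg of $\Delta_H$ times the $H$-leg of $r_M$, in one order or the other. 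Since $B\times D$ is not a bialgebra, $\Delta_{B\times D}$ cannot simply be pushed through that product.

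The dual of your Stage 1 argument does not close the gap in the order you propose either. Call the $(X,Y)$-restriction ($X,Y\in\{B,D\}$) the equation obtained from (\ref{eqn:YDcompatibility}) by restricting the input along $X$ and projecting the output by $\pi_Y$; by your own identification these are the four given equations. Distributing $\Delta_{B\times D}$ over the left side with input $\id_B\otimes u_D$ would require the $B$-projected equation at the first leg of $\Delta_{B\times D}\circ(\id_B\otimes u_D)$. That leg involves the left $D$-coaction on $B$, and it factors through neither $\id_B\otimes u_D$ nor $u_B\otimes\id_D$. So none of the four equations applies there. In short, your argument never actually combines the four equations, and that combination is the whole content of the lemma.

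\textbf{What is needed.} You need a computation that interleaves the equations. The paper does this in one pass: it expands the action and coaction of $B\times D$ by (\ref{eqn:B11Dact}) and (\ref{eqn:B11Dcoact}), then applies (\ref{eqn:YDrestricted}), (\ref{eqn:YDrestricted2}), (\ref{eqn:YDrestricted}) in succession. Your two-stage scheme can be repaired in the same spirit, as follows.

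Put $\rho_B=(\id_M\otimes\pi_B)\circ r_M$ and $\rho_D=(\id_M\otimes\pi_D)\circ r_M$. By coassociativity and the counit factorization, $r_M=(\rho_B\otimes\id_D)\circ\rho_D$.

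For input through $\id_B\otimes u_D$: left multiplication by $B\otimes 1$ leaves the $D$-factor alone. Hence the left side of (\ref{eqn:YDcompatibility}) is the left side of the $(B,B)$-restriction applied to the $M$-leg of $\rho_D$, tensored with the $D$-leg of $\rho_D$. Then:
\begin{enumerate}
\item apply the $(B,B)$-restriction;
\item apply the $(B,D)$-restriction to carry the $B$-action past $\rho_D$;
\item reassemble using coassociativity of $\rho_D$ and the formula for $m_{B\times D}$.
\end{enumerate}

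The input $u_B\otimes\id_D$ is symmetric. Start from the right side, since right multiplication by $1\otimes D$ leaves the $B$-factor alone. Use the $(D,D)$- and $(D,B)$-restrictions, and reassemble with $(\rho_B\otimes\id_{B\times D})\circ r_M=(\id_M\otimes\Delta_B\otimes\id_D)\circ r_M$.

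\textbf{You have also mislocated the difficulty.} Stage 1 needs neither (iv), (v) nor (\ref{eqn:modcomodaxiom}). By the formulas of Lemma \ref{lem:crossprod0} and the unit conditions, $\Delta_{B\times D}$ already equals the braided product of $\Delta_{B\times D}\circ(\id_B\otimes u_D)$ and $\Delta_{B\times D}\circ(u_B\otimes\id_D)$. So the usual bialgebra argument goes through for this single factorization $(b\otimes 1)(1\otimes d)$. The general closure under multiplication that you state is false without a bialgebra structure, but you never need it.
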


\begin{proof}
The ``only if'' part is evident. In order to prove the ``if part'', we check that the condition (\ref{eqn:YDcompatibility}) holds for $H=B\times D$ according to the following calculations:
\begin{eqnarray*}
&&
\ArxivFigure{0342}
\overset{(\ref{eqn:B11Dact}),\;(\ref{eqn:B11Dcoact})}=
\ArxivFigure{0343}
\\
&&=
\ArxivFigure{0344}
\overset{(\ref{eqn:YDrestricted})}=
\ArxivFigure{0345}
\\
&&\overset{(\ref{eqn:YDrestricted2})}=
\ArxivFigure{0346}
\overset{(\ref{eqn:YDrestricted})}=
\ArxivFigure{0347}
\\
&&=
\ArxivFigure{0348}
\overset{(\ref{eqn:B11Dact}),\;(\ref{eqn:B11Dcoact})}=
\ArxivFigure{0349}.
\end{eqnarray*}

\end{proof}

\begin{lemma}\label{lem:OmegaMmodcomod}
For each $(M,p_M,r_M)\in{}_{B\times D}\YD(\C)^{B\times D}$, denote
$$p'_M=
\ArxivFigure{0350}
\;\;\;\;\text{and}\;\;\;\;
r'_M=
\ArxivFigure{0351}.$$
Then we have $(M,p'_M)\in{}_{D\times B^\ast}\C$ and $(M,r'_M)\in\C^{D\times B^\ast}$.
\end{lemma}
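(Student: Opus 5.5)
The plan is to deduce both claims from the isomorphism $\Phi:{}_D\C^B\cong{}_{D\times B^\ast}\C$ of Proposition \ref{prop:monoidaliso1}(1) and its comodule analogue, rather than checking associativity and coassociativity of $p'_M$, $r'_M$ by hand. I read $p'_M$ as: $D$ acts through the restriction of $p_M$ along $u_B\otimes\id_D$, and $B^\ast$ acts by pairing $\omega$ against the $B$-component of $r_M$ (obtained by applying $\e_D$ to the $D$-leg). Dually, I read $r'_M$ as: the $D$-leg comes from $r_M$ after killing the $B$-leg with $\e_B$, and the $B^\ast$-leg is produced by $\omega'$ feeding $B$ into the restriction of $p_M$ along $\id_B\otimes u_D$. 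If that is right, then $p'_M$ is exactly $\Phi(M,\mu_M,\nu_M)$ for
$$\mu_M:=p_M\circ(u_B\otimes\id_D\otimes\id_M),\qquad \nu_M:=(\id_M\otimes\id_B\otimes\e_D)\circ r_M .$$

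\textbf{Step 1: $(M,\mu_M)\in{}_D\C$ and $(M,\nu_M)\in\C^B$.} The map $u_B\otimes\id_D:D\to B\times D$ is an algebra morphism. This follows from the first identities of Lemma \ref{lem:crossprodalg} together with the unit conditions (i)--(iii) of Definition \ref{def:matchedpair}, which say $D$ acts trivially on $u_B$. Hence $\mu_M$ is a $D$-action. Dually, $\id_B\otimes\e_D:B\times D\to B$ is a coalgebra morphism by Corollary \ref{cor:mult&comult} and the counit conditions, so $\nu_M$ is a $B$-coaction.

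\textbf{Step 2: the Doi--Hopf compatibility.} I need the defining relation of ${}_D\C^B$ for $(M,\mu_M,\nu_M)$. It should be precisely the mixed restricted Yetter--Drinfeld identity of Lemma \ref{lem:YDrestricted}, the one in which $D$ acts and the $B$-part of the coaction is read off. I would apply $\id_M\otimes\id_B\otimes\e_D$ to that equation and simplify. The simplification uses the counit conditions on $\nu_D$ and $\mu_D$ and Equation (\ref{eqn:B11Dcoact}) to split $r_M$ into its $B$- and $D$-components. With Steps 1 and 2 in hand, Proposition \ref{prop:monoidaliso1}(1) yields $(M,p'_M)\in{}_{D\times B^\ast}\C$.

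\textbf{Step 3: the comodule side.} The paper only records the left-comodule isomorphism $\Psi$ of Lemma \ref{lem:monoidaliso2}, so I would first state and prove its mirror: right $D\times B^\ast$-comodules correspond to objects with a right $D$-coaction and a left $B$-action satisfying the mirrored Doi--Hopf relation. The proof copies that of Proposition \ref{prop:monoidaliso1}(1), rotated by 180 degrees, using Lemma \ref{lem:munumixed} and Equation (\ref{eqn:B11Dcoact}) for $D\times B^\ast$. I would then apply it to $\nu'_M:=(\id_M\otimes\e_B\otimes\id_D)\circ r_M$ and $\mu'_M:=p_M\circ(\id_B\otimes u_D\otimes\id_M)$. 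Steps 1 and 2 carry over dually, with compatibility coming from the other mixed equation in (\ref{eqn:YDrestricted})--(\ref{eqn:YDrestricted2}).

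The main obstacle I expect is Step 2 and its dual. One has to match the correct restricted YD identity with the Doi--Hopf relation, and check that the braidings line up after applying the counits. The difficulty is that the YD condition (\ref{eqn:YDcompatibility}) is twisted, so each restricted form mixes the action with the coaction through a braiding. If the braiding orientation does not match directly, I would first precompose with $u_B\otimes\id_D$ in the full condition (\ref{eqn:YDcompatibility}), then postcompose with $\id_B\otimes\e_D$, and simplify with Corollary \ref{cor:mult&comult}.
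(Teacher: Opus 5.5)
Your treatment of $p'_M$ is the paper's argument. The former equation of (\ref{eqn:YDrestricted}) places $(M,\mu_M,\nu_M)$ in ${}_D\C^B$, and $\Phi$ of Proposition \ref{prop:monoidaliso1}(1) sends it to $(M,p'_M)$. For $r'_M$ the paper is more direct. It checks $(r'_M\otimes\id)\circ r'_M=(\id_M\otimes\Delta_{D\times B^\ast})\circ r'_M$ in a single diagrammatic computation and introduces no auxiliary category. The paper attributes this check to the latter equation of (\ref{eqn:YDrestricted}), and the computation itself cites (\ref{eqn:YDcompatibility}) and the identities of Lemma \ref{lem:munumixed}.

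Your Step 3 can be made to work, but the justification you give for the auxiliary ``mirror'' isomorphism is wrong.

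\emph{Why the rotation fails.} A $180$-degree rotation exchanges actions with coactions and left with right. So $B$ acquires right $D$-(co)actions and $D$ acquires left $B$-(co)actions, and the rotated matched pair is $(D,B)$. The rotated Proposition \ref{prop:monoidaliso1} is therefore a statement about a dual of $D$, not about $\C^{D\times B^\ast}$. Concretely, the Doi--Hopf relation uses the right $B$-coaction on $D$ and the left $D$-action on $B$. Rotating it turns these into a left $B$-action on $D$ and a right $D$-coaction on $B$, which a matched pair does not supply.

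\emph{What the correct relation looks like.} The relation that right $D\times B^\ast$-comodules impose must be read off from $\Delta_{D\times B^\ast}$. That is, it comes from the left $B^\ast$-coaction on $D$ and the right $D$-coaction on $B^\ast$ of Lemma \ref{lem:newmodcomod}. Translated back through $\omega,\omega'$, it involves the \emph{other} half of the matched-pair data: the right $B$-action on $D$, the left $D$-coaction on $B$, $m_D$ and $\Delta_B$. So it has to be derived, not obtained by symmetry.

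\emph{The fix.} Let $\rho_D$ and $\rho_{B^\ast}$ be the parts of a right $D\times B^\ast$-coaction $r'$ obtained by applying $\e_{B^\ast}$, resp.\ $\e_D$. Then $r'=(\rho_D\otimes\id_{B^\ast})\circ\rho_{B^\ast}$, because $(\id_D\otimes\e_{B^\ast}\otimes\e_D\otimes\id_{B^\ast})\circ\Delta_{D\times B^\ast}=\id$. Coassociativity of $r'$ then reduces to coassociativity of $\rho_D$ and $\rho_{B^\ast}$, plus one exchange relation involving the twisting map $D\otimes B^\ast\to B^\ast\otimes D$ of $\Delta_{D\times B^\ast}$. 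You then verify that exchange relation for $M$ from the mixed Yetter--Drinfeld identity, using Lemma \ref{lem:munumixed}.

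You only need this one direction (a compatible pair assembles into a coaction), and it is exactly the paper's computation. What the extra category buys is a reusable description of $\C^{D\times B^\ast}$, which would make $\Omega^{-1}$ transparent. The cost is setting up a new compatibility relation and a converse that this lemma does not need.
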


\begin{proof}
Firstly, it follows by the former equation of (\ref{eqn:YDrestricted}) that the object $(M,\mu_M,\nu_M)$ belongs to ${}_D\C^B$, where
$\mu_M=
\ArxivFigure{0352}$
and
$\nu_M=
\ArxivFigure{0353}$.
However, we could find that the isomorphism $\Phi:{}_D\C^B\cong{}_{D\times B^\ast}\C$ introduced in Proposition \ref{prop:monoidaliso1}(1) sends $(M,\mu_M,\nu_M)$ to the object $(M,p'_M)$, which implies that $p'_M$ is a left $D\times B^\ast$-module structure.

On the other hand, by direct calculations according to the latter equation of (\ref{eqn:YDrestricted}), one could know that $(M,r'_M)\in\C^{D\times B^\ast}$. Specifically,
\begin{eqnarray*}
&&
\ArxivFigure{0354}
=
\ArxivFigure{0355}
=
\ArxivFigure{0356}
\\
&&
\overset{(\ref{eqn:YDcompatibility})}=
\ArxivFigure{0357}
=
\ArxivFigure{0358}
\\
&&=
\ArxivFigure{0359}
\overset{(\ref{eqn:mumu&nunu})}=
\ArxivFigure{0360}
=(\id_M\otimes\Delta_{D\times B^\ast})\circ r_M.
\end{eqnarray*}

\end{proof}

\begin{proposition}\label{prop:YDiso1}
There is an isomorphism $\Omega:{}_{B\times D}\YD(\C)^{B\times D}\cong{}_{D\times B^\ast}\YD(\C)^{D\times B^\ast}$ of categories. Specifically, with the notations in Lemma \ref{lem:OmegaMmodcomod},
$$\Omega(M,p_M,r_M):=(M,p'_M,r'_M)\;\;\;\;\;\;\;\;
(\forall M\in{}_{B\times D}\YD(\C)^{B\times D}),$$
and $\Omega$ is the identity on morphisms.
\end{proposition}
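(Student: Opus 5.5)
Having Lemma \ref{lem:OmegaMmodcomod} at hand, I will show three things: $\Omega$ is well defined on objects (the YD compatibility holds for $(M,p'_M,r'_M)$), $\Omega$ is well defined on morphisms, and $\Omega$ has an inverse of the same kind. For well-definedness I will not verify (\ref{eqn:YDcompatibility}) for $D\times B^\ast$ directly. Instead, I will apply Lemma \ref{lem:YDrestricted} to the matched pair $(D,B^\ast)$, which is a matched pair by Proposition \ref{prop:rightpartialdual1}. This reduces the task to four restricted equations analogous to (\ref{eqn:YDrestricted}) and (\ref{eqn:YDrestricted2}).

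The first of these says that the restricted $D$-action and the restricted $B^\ast$-action form an object of the corresponding Doi--Hopf category. This is built into the definition of $p'_M$ via $\Phi$; in fact it is exactly the compatibility used in Proposition \ref{prop:monoidaliso1} to define $\Phi$. The remaining three equations involve the restricted $D\times 1$-coaction and $1\times B^\ast$-coaction of $r'_M$. By construction these are the $D$-coaction of $r_M$ and the $B$-action of $p_M$ converted by $\omega,\omega'$. Each equation therefore becomes one of the four equations of Lemma \ref{lem:YDrestricted} for $(M,p_M,r_M)$, after moving every $B^\ast$ leg across to a $B$ leg. This conversion uses the pairing identities (\ref{eqn:braidedHopfpair}), (\ref{eqn:braidedHopfcopair}) together with Lemma \ref{lem:munumixed}. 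The braiding conventions (over- versus under-crossings) must match, and they do: $\mu_D,\nu_D,\mu_{B^\ast},\nu_{B^\ast}$ in Lemma \ref{lem:newmodcomod} were defined with the same pairings.

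On morphisms, a morphism $f$ in ${}_{B\times D}\YD(\C)^{B\times D}$ commutes with $p_M$ and $r_M$. Since $p'_M$ and $r'_M$ are built from $p_M$ and $r_M$ by pre- and post-composing with morphisms not involving $M$, $f$ also commutes with $p'_M$ and $r'_M$. Hence $\Omega(f)=f$ is a morphism, and functoriality is trivial.

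For invertibility I will define $\overline\Omega$ in the reverse direction. From $(N,p'_N,r'_N)$ I recover the $D$-action, the $B$-coaction, the $B$-action and the $D$-coaction by restricting along $u$ and $\e$, as in the definition of $\overline\Phi$ in Proposition \ref{prop:monoidaliso1}. I then dualize the $B^\ast$-pieces back using $\omega'$ and non-degeneracy, and reassemble via (\ref{eqn:B11Dact}) and (\ref{eqn:B11Dcoact}). Showing that $\overline\Omega(N)$ is a Yetter--Drinfeld module over $B\times D$ is the mirror image of the previous argument, and non-degeneracy gives $\overline\Omega\Omega=\id$ and $\Omega\overline\Omega=\id$.

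The main obstacle is the verification of the mixed equation among (\ref{eqn:YDrestricted2}) for $D\times B^\ast$. It pairs the $B^\ast$-action with the $D$-coaction, so both $\omega$ and $\omega'$ appear together with braidings. For this step I would first use non-degeneracy to cap off the extra $B^\ast$ strand, then identify the result with the corresponding equation for $B\times D$ by means of (\ref{eqn:munu&numu}).
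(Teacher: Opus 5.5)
Your architecture matches the paper's. You apply Lemma \ref{lem:YDrestricted} to the matched pair $(D,B^\ast)$, check four restricted equations for $(M,p'_M,r'_M)$, note that $\Omega$ is the identity on morphisms, and write an explicit inverse. The gap is in where you say the four equations come from.

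Each restricted equation couples one action with one coaction. For a pair $(X,Y)$, the former equation of (\ref{eqn:YDrestricted}) is the Doi--Hopf compatibility of the $Y$-action with the $X$-coaction; this is how Lemma \ref{lem:OmegaMmodcomod} uses it. For $(D,B^\ast)$ it therefore couples the $B^\ast$-action of $p'_M$ with the $D$-coaction of $r'_M$.

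It is not the statement that the $D$-action and the $B^\ast$-action assemble into $p'_M$. That is the module axiom, already supplied by $\Phi$ in Lemma \ref{lem:OmegaMmodcomod}, and $\Phi$ says nothing about the $D$-coaction. So this equation is not ``built into'' $p'_M$; it is exactly the equation the paper verifies by an explicit computation.

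Moreover, the $B^\ast$-action of $p'_M$ is just the $B$-coaction of $r_M$ paired through $\omega$, so this equation involves only $r_M$. Symmetrically, the latter equation of (\ref{eqn:YDrestricted}) for $(D,B^\ast)$ pits the $D$-action against the $B^\ast$-coaction, which is the $B$-action of $p_M$ through $\omega'$; so it involves only $p_M$. Neither can ``become'' a Yetter--Drinfeld equation of $M$ over $B\times D$, since each of those couples an action of $M$ with a coaction of $M$. Your blanket reduction therefore points to the wrong source. So does your ``main obstacle'' step, where you identify the $B^\ast$-action/$D$-coaction equation with ``the corresponding equation for $B\times D$''.

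The missing idea is a swap of roles.
\begin{itemize}
\item The two mixed Yetter--Drinfeld equations of $M$, i.e.\ (\ref{eqn:YDrestricted}) for $(B,D)$, are already used up in Lemma \ref{lem:OmegaMmodcomod}. There they make $p'_M$ a $D\times B^\ast$-module and $r'_M$ a $D\times B^\ast$-comodule.
\item Conversely, the two mixed equations for $\Omega(M)$ must be derived from the $B\times D$-comodule axiom of $r_M$ and the $B\times D$-module axiom of $p_M$. Concretely, use (\ref{eqn:B11Dcoact}) and (\ref{eqn:B11Dact}), transported through the pairing identities and Lemma \ref{lem:munumixed}. The paper's displayed computation for the former equation cites only (\ref{eqn:munu&numu}) and no Yetter--Drinfeld condition of $M$.
\item Only the two pure equations for $(D,B^\ast)$ translate into Yetter--Drinfeld equations of $M$, namely the pure ones (\ref{eqn:YDrestricted2}). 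One pairs the $D$-action with the $D$-coaction; the other pairs the $B^\ast$-action with the $B^\ast$-coaction.
\end{itemize}
The same swap is what makes your inverse $\overline\Omega$ land in Yetter--Drinfeld modules. With this bookkeeping corrected, your plan goes through; the parts on morphisms and on mutual inverseness are fine.
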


\begin{proof}
Since $(D,B^\ast)$ is a matched pair,
we could use Lemma \ref{lem:YDrestricted} to prove that
$\Omega(M,p_M,r_M):=(M,p'_M,r'_M)$ is an object ${}_{D\times B^\ast}\YD(\C)^{D\times B^\ast}$.

In fact,
the former equation in (\ref{eqn:YDrestricted}) for $(D,B^\ast)$ is shown as follows:
\begin{eqnarray*}
&&
\ArxivFigure{0361}
=
\ArxivFigure{0362}
\overset{(\ref{eqn:munu&numu})}=
\ArxivFigure{0363}
\\
&&=
\ArxivFigure{0364}
=
\ArxivFigure{0365}
=
\ArxivFigure{0366}
\\
&&=
\ArxivFigure{0367}
=
\ArxivFigure{0368}
=
\ArxivFigure{0369},
\end{eqnarray*}
while other three equations could be shown similarly.

On the other hand, the inverse functor of $\Omega$ is found as follows: For each $(M,p'_M,r'_M)\in{}_{D\times B^\ast}\YD(\C)^{D\times B^\ast}$, we define
$\Omega^{-1}(M,p'_M,r'_M)=(M,p_M,r_M)$, where
$$p_M=
\ArxivFigure{0370}
\;\;\;\;\text{and}\;\;\;\;
r_M=
\ArxivFigure{0371}.$$
\end{proof}

\subsection{Yetter-Drinfeld modules over (generalized) smash biproduct bialgebras}

\begin{lemma}(\cite[Theorem 3.4.3]{Bes97} and \cite[Remark 3.6(1)]{BLS15})\label{lem:YDmodmonoidal}
Let $H$ be a bialgebra in a braided monoidal category $\C$. Then:
\begin{itemize}
\item[(1)]
The category ${}_H\YD(\C)^H$ is a monoidal category with tensor product bifunctor defined as follows:
For any $M,N\in{}_H\YD(\C)^H$, their tensor product is the object $M\otimes N\in\C$ with
\begin{itemize}
\item
left $H$-action
$p_{M\otimes N}=
\ArxivFigure{0372}$
and right $H$-coaction
$r_{M\otimes N}=
\ArxivFigure{0373}$.
\end{itemize}
The unit object of ${}_H\YD(\C)^H$ is $(\1,\e_H,u_H)$.

Furthermore,
${}_H\YD(\C)^H$ is prebraided with $\sigma$ defined via
$$\sigma_{M,N}:=
\ArxivFigure{0374},
\;\;\;\;\;\;\;\;
(\forall M,N\in{}_H\YD(\C)^H).$$

\item[(2)]
If $H$ is a Hopf algebra, then ${}_H\YD(\C)^H$ is a braided monoidal category with braiding $\sigma$.

\end{itemize}
\end{lemma}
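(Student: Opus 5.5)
We prove Lemma \ref{lem:YDmodmonoidal} directly in the graphical calculus. It is essentially \cite[Theorem 3.4.3]{Bes97}, transported to left-right Yetter-Drinfeld modules, so the plan is to check three things in turn: that the tensor product stays in ${}_H\YD(\C)^H$, that $\sigma$ is a morphism there satisfying the hexagon relations, and that $\sigma$ is invertible when an antipode exists.

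\textbf{Tensor product and unit.} For $M,N\in{}_H\YD(\C)^H$ we first note the following.
\begin{itemize}
\item $p_{M\otimes N}$ is an associative unital action. This uses only that $\Delta_H$ is an algebra morphism.
\item $r_{M\otimes N}$ is a coassociative counital coaction. This uses only that $m_H$ is a coalgebra morphism.
\end{itemize}
The real work is the compatibility (\ref{eqn:YDcompatibility}) for $M\otimes N$. Expand the left side by writing $\Delta_H$ of the acting element twice, so that coassociativity gives a threefold coproduct. Then apply (\ref{eqn:YDcompatibility}) first to $N$, using the inner legs, and afterwards to $M$. Naturality of $c$ moves the braidings past the action and coaction boxes. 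Finally, associativity of $m_H$ lets the two multiplications coming from $r_M$ and $r_N$ collect into $m_H\circ(m_H\otimes\id)$, which is the right side for $M\otimes N$. Associativity constraints are the identities, because both operations are built from associative or coassociative data. The unit $(\1,\e_H,u_H)$ satisfies (\ref{eqn:YDcompatibility}) trivially, by the bialgebra axioms $\e_H m_H=\e_H\otimes\e_H$ and $\Delta_H u_H=u_H\otimes u_H$.

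\textbf{Prebraiding.} We next check that $\sigma_{M,N}$ is $H$-linear and $H$-colinear from $M\otimes N$ to $N\otimes M$.
\begin{itemize}
\item For $H$-linearity, push $p_{M\otimes N}$ through $\sigma$ and invoke the Yetter-Drinfeld condition (\ref{eqn:YDcompatibility}) for the module whose coaction is used inside $\sigma$.
\item Colinearity is obtained by the dual computation, essentially the same graph rotated by 180 degrees.
\end{itemize}
Naturality of $\sigma$ in $M$ and $N$ is clear, since morphisms commute with actions and coactions. The two hexagon identities follow from two facts: the coaction on a tensor product is multiplicative, and an action is associative, so $\sigma_{M,N\otimes L}$ factors as $(\id\otimes\sigma_{M,L})(\sigma_{M,N}\otimes\id)$, and similarly in the other variable. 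We expect the main obstacle to be the compatibility (\ref{eqn:YDcompatibility}) for $M\otimes N$, since it requires carefully tracking where the braidings sit in the left-right convention. We would handle it by first reducing to the two-sided form given in Figure 5a of \cite{Bes97}.

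\textbf{Hopf case.} For part (2), write down an explicit candidate inverse of $\sigma_{M,N}$. It is the same diagram with $c^{-1}$ in place of $c$ and the antipode $S_H$ (or $S_H^{-1}$, as the orientation requires) inserted on the acting leg. Then verify $\sigma^{-1}\sigma=\id$ using the antipode identities $m_H(S_H\otimes\id)\Delta_H=u_H\e_H$ and the unit/counit axioms of the module and comodule. If $S_H$ is not invertible, we instead build the inverse from $S_H$ together with $c^{-1}$, following \cite[Remark 3.6(1)]{BLS15}.
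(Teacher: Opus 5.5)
\textbf{Comparison with the paper.} The paper does not prove this lemma; it quotes it from \cite[Theorem 3.4.3]{Bes97} and \cite[Remark 3.6(1)]{BLS15}. Your direct graphical verification is therefore a different, self-contained route, and in outline it is the right one. It checks the action and coaction axioms on $M\otimes N$. It proves the compatibility (\ref{eqn:YDcompatibility}) for $M\otimes N$ via a threefold coproduct: the YD condition for $N$ on one adjacent pair of legs, then for $M$ on the other, then associativity of $m_H$. It gets one hexagon from $\sigma_{M,N\otimes L}=(\id_N\otimes\sigma_{M,L})\circ(\sigma_{M,N}\otimes\id_L)$, using the tensor coaction and associativity of $p_M$, and the other from its mirror, using the tensor action and coassociativity of $r_L$. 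What the citation buys the paper is a fixed, mutually consistent choice of crossings. What you must supply instead is that consistency in a non-symmetric $\C$. For instance, the crossing of $M$ past $N\otimes H$ in $\sigma$ must have the same type as the crossing by which the $H$-leg of the left factor passes the right factor in $r_{M\otimes N}$. Otherwise the identity for $\sigma_{M,N\otimes L}$ fails by a double braiding.

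\textbf{Two steps need correcting.} First, colinearity of $\sigma_{M,N}$ is not the $180^\circ$-rotation of its linearity. Rotating $\sigma_{M,N}$ gives the other candidate map $M\otimes N\to N\otimes M$, in which the coaction of $M$ acts on $N$. So no symmetry argument is available, and you need a separate (equally short) computation. Expand $r_{N\otimes M}\circ\sigma_{M,N}$, split the acting leg using coassociativity of $r_N$, and apply (\ref{eqn:YDcompatibility}) for $M$, the factor being acted on. Linearity, by contrast, uses (\ref{eqn:YDcompatibility}) for $N$.

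Second, in part (2) there is nothing to hedge about. For the left--right braiding, the right coaction of $N$ feeds the left action on $M$. The inverse is then
$\sigma^{-1}_{M,N}=\beta\circ(\id_N\otimes p_M)\circ(\id_N\otimes S_H\otimes\id_M)\circ(r_N\otimes\id_M)$,
where $\beta\in\{c_{N,M},c^{-1}_{M,N}\}$ is the inverse of the braiding of $M$ past $N$ used in $\sigma_{M,N}$. It involves only $S_H$, never $S_H^{-1}$, as it must, since the statement does not assume a bijective antipode. You must check both composites:
\begin{itemize}
\item $\sigma\circ\sigma^{-1}=\id$ uses $m_H(\id\otimes S_H)\Delta_H=u_H\e_H$;
\item $\sigma^{-1}\circ\sigma=\id$ uses $m_H(S_H\otimes\id)\Delta_H=u_H\e_H$.
\end{itemize}
Each also uses associativity of $p_M$, coassociativity of $r_N$ and naturality of $c$; the YD condition plays no role. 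Since $\sigma$ is a morphism in ${}_H\YD(\C)^H$, its inverse is one automatically.
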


\begin{theorem}\label{thm:YDiso2}
Let $(B,D)$ be a Hopf matched pair, and let $(B,B^\ast)$ is a braided dual pair in a braided monoidal category $\C$. Then:
\begin{itemize}
\item[(1)]
The isomorphism $\Omega$ is a lax monoidal functor of prebraided monoidal categories.
\item[(2)]
If $\id_B$ and $\id_D$ are both convolution invertible, then $\Omega$ is an isomorphism of braided monoidal categories.
\end{itemize}
\end{theorem}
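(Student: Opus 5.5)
The plan is to show that $\Omega$, which is the identity on underlying objects and morphisms, is \emph{strictly} compatible with tensor products, unit objects and the prebraidings. Part (1) then follows with identity coherence morphisms, and part (2) follows from (1) together with Lemma \ref{lem:YDmodmonoidal}(2). Under the hypotheses, $B\times D$ is a bialgebra by Lemma \ref{lem:crossprod}(1), and $D\times B^\ast$ is a bialgebra by Corollary \ref{cor:rightpartialdual2}(1). So both Yetter-Drinfeld categories are prebraided monoidal by Lemma \ref{lem:YDmodmonoidal}(1), and the statement makes sense.

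\textbf{Step 1: actions on tensor products.} Fix $M,N\in{}_{B\times D}\YD(\C)^{B\times D}$. I first show that $p'_{M\otimes N}=p_{\Omega(M)\otimes\Omega(N)}$. By the proof of Lemma \ref{lem:OmegaMmodcomod}, $p'_M=\Phi(M,\mu_M,\nu_M)$, where $\mu_M$ is the restriction of $p_M$ to $D$ and $\nu_M$ is the restriction of $r_M$ to $B$. The restriction of $p_{M\otimes N}$ to $D\times 1$ is the diagonal $D$-action, twisted by $\Delta_D$ and the coaction of $B$ on $D$. This is exactly the action $\mu_{M\otimes N}$ of Proposition \ref{prop:DHmodtensorprod}, and the same holds for the restricted $B$-coactions. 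Hence $(M\otimes N,\mu,\nu)=(M,\mu_M,\nu_M)\otimes(N,\mu_N,\nu_N)$ in ${}_D\C^B$. Monoidality of $\Phi$ (Proposition \ref{prop:monoidaliso1}(2)) then gives the claim. Here I use Lemma \ref{lem:crossprodalg} only to reduce to the $D\times1$ and $1\times B$ parts.

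\textbf{Step 2: coactions on tensor products.} Next I show that $r'_{M\otimes N}=r_{\Omega(M)\otimes\Omega(N)}$. By (\ref{eqn:B11Dcoact}) it suffices to compare the $D$- and $B^\ast$-components of the coaction. The $B^\ast$-component of $r'$ is built from the $B$-action $p_M$ restricted to $1\times B$, transposed via $\omega'$. The diagonal $B^\ast$-coaction on $M\otimes N$ uses $m_{D\times B^\ast}$, which involves the actions of $D$ on $B^\ast$. This should match the diagonal $B$-action on $M\otimes N$, which uses $\Delta_{B\times D}$ and the coaction of $D$ on $B$. The matching uses the pairing identities (\ref{eqn:braidedHopfpair}), (\ref{eqn:braidedHopfcopair}) and Lemma \ref{lem:munumixed}. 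Alternatively, one could apply $\Psi$ from Lemma \ref{lem:monoidaliso2} symmetrically, after checking that the restricted data lie in ${}^D\C_B$ via the latter equation of (\ref{eqn:YDrestricted}). For the unit objects, $\e_{D\times B^\ast}=\e_D\otimes\e_{B^\ast}$ and $u_{D\times B^\ast}$ correspond to $u_B$ and $\e_D$ through $\omega,\omega'$, so $\Omega(\1)=\1$.

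\textbf{Step 3: the prebraidings.} Finally I compare $\sigma_{M,N}$ with $\sigma'_{\Omega M,\Omega N}$. Each is the braiding followed by acting with the coaction component; for $B\times D$, (\ref{eqn:B11Dact}) and (\ref{eqn:B11Dcoact}) split this into a $D$-part and a $B$-part. The $D$-parts coincide directly. The $B$-part of $\sigma$ (coact by $B$, then act by $B$) should equal the $B^\ast$-part of $\sigma'$ (coact by $B^\ast$, then act by $B^\ast$) after inserting $\omega$ and $\omega'$ and cancelling them by the non-degeneracy condition. The order in which the $B$- and $D$-parts are applied must also be reconciled, and I expect to use the Yetter-Drinfeld conditions (\ref{eqn:YDrestricted2}) for this.

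I expect Step 2 and the reordering in Step 3 to be the main obstacle, because these comparisons are not covered by the earlier functors $\Phi$ and $\Psi$ and require explicit graphical computation with the mixed actions of Lemma \ref{lem:munumixed}.

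Part (2) then follows. Under convolution invertibility, both $B\times D$ and $D\times B^\ast$ are Hopf algebras, by Lemma \ref{lem:crossprod}(2) and Corollary \ref{cor:rightpartialdual2}(2). So the prebraidings are braidings, and the strict monoidal isomorphism $\Omega$ preserving them is an isomorphism of braided monoidal categories.
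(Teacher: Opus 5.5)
\textbf{There is a genuine gap.} Your plan depends on $\Omega$ being \emph{strictly} monoidal and strictly compatible with the prebraidings, with identity coherence maps. This is false, so the argument breaks at Step 3, and Steps 1--2 are not justified either.

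\emph{Why Step 3 fails.} The functor $\Omega$ turns the $B$-coaction into the $B^\ast$-action and the $B$-action into the $B^\ast$-coaction. So $\sigma'_{\Omega(M),\Omega(N)}$, which lets the coaction of one factor act on the other, translates back into a morphism in which the roles of $M$ and $N$ are interchanged compared with $\sigma_{M,N}$. No use of (\ref{eqn:YDrestricted2}) can undo this. The degenerate case already shows it:
\begin{itemize}
\item take $\C=\Vec$ and $D=\k$ with trivial structures, so $B\times D=B$ and $D\times B^\ast=B^\ast$;
\item take $B=\k[\mathbb{Z}/2]$ with generator $g$ and $\operatorname{char}\k\neq 2$;
\item take $M=\k$ in degree $g$ with trivial action, and $N=\k$ in degree $1$ with the sign action.
\end{itemize}
Write $|x|$ for the degree of a homogeneous element. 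Whatever left-right convention is used (possible antipodes are harmless since $g=g^{-1}$), one of $\sigma_{M,N}$ and $\sigma'_{\Omega(M),\Omega(N)}$ is $m\otimes n\mapsto |m|\cdot n\otimes m=-n\otimes m$. The other is $m\otimes n\mapsto n\otimes |n|\cdot m=n\otimes m$. They differ by a sign.

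\emph{Why Steps 1--2 are unjustified.} The restriction of $p_{M\otimes N}$ to $D\times 1$ feeds the $B$-leg of $\Delta_{B\times D}\circ(u_B\otimes\id_D)$, which comes from the $B$-coaction on $D$, into the $B$-action of $N$. By contrast, $\mu_{M\otimes N}$ of Proposition \ref{prop:DHmodtensorprod} is built only from $D$-actions and $B$-coactions, since objects of ${}_D\C^B$ have no $B$-action. Similarly, the $B$-part of $r_{M\otimes N}$ involves, through $(\id_B\otimes\e_D)\circ m_{B\times D}$, the $D$-component of the coaction of $M$, which is also not data of ${}_D\C^B$. These morphisms are built from different data and do not agree in general. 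So you cannot identify the restricted structure on $M\otimes N$ with the tensor product in ${}_D\C^B$ and then invoke monoidality of $\Phi$.

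\textbf{The missing idea is a non-identity lax structure.}
\begin{itemize}
\item The paper defines $J_{M,N}:\Omega(M)\otimes\Omega(N)\to\Omega(M\otimes N)$ by (\ref{eqn:JMN}).
\item It checks that $J_{M,N}$ is a morphism in ${}_{D\times B^\ast}\YD(\C)^{D\times B^\ast}$ by comparing the four restricted (co)actions.
\item It verifies $J_{\1,M}=\id_M=J_{M,\1}$, and the associativity constraint using (\ref{eqn:YDrestricted2}).
\item For the prebraidings it introduces an auxiliary $I_{M,N}$. Roughly, $J$ and $I$ are the $B$- and $D$-parts of the double braiding, in the sense that $J_{M,N}\circ I_{M,N}=\sigma_{N,M}\circ c_{M,N}$.
\item It then shows $\sigma'_{\Omega(M),\Omega(N)}=I_{N,M}\circ c^{-1}_{N,M}\circ J_{M,N}$, which gives $J_{N,M}\circ\sigma'_{\Omega(M),\Omega(N)}=\sigma_{M,N}\circ J_{M,N}$.
\end{itemize}
So compatibility holds only up to $J$, not on the nose. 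In the degenerate example, $I$ is trivial and $J_{M,N}=\sigma_{N,M}\circ c_{M,N}$, which is $-\id$ for one ordering of $M,N$.

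This also changes the logic of part (2). Convolution invertibility is what makes $J_{M,N}$ invertible, since its inverse is built with $S_B$; only then is $\Omega$ strong monoidal. That is why (1) asserts only a lax structure. In your plan the hypothesis is used merely to upgrade prebraidings to braidings, which misses its actual role.
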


\begin{proof}
\begin{itemize}
\item[(1)]
For any object $(M,p_M,r_M),(N,p_N,r_N)\in{}_{B\times D}\YD(\C)^{B\times D}$,
we denote $$\Omega(M\otimes N,p_{M\otimes N},r_{M\otimes N})=(M\otimes N,p'_{M\otimes N},r'_{M\otimes N})$$
and define
a morphism
$J_{M,N}:\Omega(M)\otimes\Omega(N)\rightarrow\Omega(M\otimes N)$
in $\C$ as follows:
\begin{equation}\label{eqn:JMN}
J_{M,N}=
\ArxivFigure{0375}.
\end{equation}

It is a natural transformation in ${}_{D\times B^\ast}\YD(\C)^{D\times B^\ast}$, since one could check that $J_{M,N}$ preserves left $D\times 1$-actions, left $1\times B^\ast$-actions, right $D\times 1$-coactions and right $1\times B^\ast$-coactions, respectively.
For example, in order to find that $J_{M,N}$ preserves left $D\times 1$-actions, we need to show that
\begin{eqnarray*}
&&
J_{M,N}\circ (p'_M\otimes p'_N)
\circ(\id_{D\times B^\ast}\otimes c_{D\times B^\ast,M}\otimes\id_N)
\circ(\Delta_{D\times B^\ast}\otimes\id_{M\otimes N})  \\
&=&
p'_{M\otimes N}\circ(\id_{D\times B^\ast}\otimes J_{M,N})
\end{eqnarray*}
holds as a morphism from $(D\times 1)\otimes\Omega(M)\otimes\Omega(N)$
to $\Omega(M\otimes N)$.
In fact,
\begin{eqnarray*}
&&
\ArxivFigure{0376}
=
\ArxivFigure{0377}
=
\ArxivFigure{0378}
\\
&&=
\ArxivFigure{0379}
=
\ArxivFigure{0380}
=
\ArxivFigure{0381}
\\
&&=
\ArxivFigure{0382}
=
\ArxivFigure{0383}
=
\ArxivFigure{0384}
\\
&&=
\ArxivFigure{0385}
=
\ArxivFigure{0386}.
\end{eqnarray*}

Moreover, we could directly find that
$J_{\1,M}=\id_M=J_{M,\1}$,
and it remains to check that
$$J_{L\otimes M,N}\circ (J_{L,M}\otimes\id_{\Omega(N)})
=J_{L,M\otimes N}\circ (\id_{\Omega(L)}\otimes J_{M,N})$$
for all $(L,p_L,r_L),(M,p_M,r_M),(N,p_N,r_N)\in{}_{B\times D}\YD(\C)^{B\times D}$.
Specifically, we have following calculations:
\begin{eqnarray*}
&&
\ArxivFigure{0387}
=
\ArxivFigure{0388}
=
\ArxivFigure{0389}
=
\ArxivFigure{0390}
\\
&\overset{(\ref{eqn:YDrestricted2})}=&
\ArxivFigure{0391}
=
\ArxivFigure{0392}
=
\ArxivFigure{0393}
\\
&=&
\ArxivFigure{0394}.
\end{eqnarray*}

Now we define another morphism
$$I_{M,N}=
\ArxivFigure{0395},$$
which is natural in $M,N\in{}_{B\times D}\YD(\C)^{B\times D}$.
Then it is clear by Lemma \ref{lem:crossprodalg} that
\begin{equation}\label{eqn:JI=sigmac}
J_{M,N}\circ I_{M,N}=\sigma_{N,M}\circ c_{M,N}
\;\;\;\;\;\;\;\;(\forall M,N\in{}_{B\times D}\YD(\C)^{B\times D})
\end{equation}
holds.

On the other hand, let us denote the prebraiding of ${}_{D\times B^\ast}\YD(\C)^{D\times B^\ast}$ by $\sigma'$ as defined in Lemma \ref{lem:YDmodmonoidal}(2). Meanwhile, for any
$(M,p_M,r_M),(N,p_N,r_N)\in{}_{B\times D}\YD(\C)^{B\times D}$, suppose
$$\Omega(M,p_M,r_M)=(M,p'_M,r'_M)\;\;\;\;\text{and}\;\;\;\;
\Omega(N,p_N,r_N)=(N,p'_N,r'_N).$$
Then
we could know that
\begin{eqnarray*}
\sigma'_{\Omega(M),\Omega(N)}
&=&
\ArxivFigure{0396}
=
\ArxivFigure{0397}
=
\ArxivFigure{0398}
\\
&=&
I_{N,M}\circ c^{-1}_{N,M}\circ J_{M,N}.
\end{eqnarray*}
As a consequence,
\begin{eqnarray*}
&&
J_{N,M}\circ\sigma'_{\Omega(M),\Omega(N)}
=
J_{N,M}\circ I_{N,M}\circ c^{-1}_{N,M}\circ J_{M,N}  \\
&\overset{(\ref{eqn:JI=sigmac})}=&
\sigma_{M,N}\circ c_{N,M}\circ c^{-1}_{N,M}\circ J_{M,N}
=
\sigma_{M,N}\circ J_{M,N},
\end{eqnarray*}
which means that
the lax monoidal isomorphiam $(\Omega,J)$ preserves the prebraidings.

\item[(2)]
When $\id_B$ and $\id_D$ are both convolution invertible, it is known by Lemma \ref{lem:crossprod}(2) that the generalized smash biproduct $B\times D$ is a Hopf algebra.
Furthermore,
one could directly verify that
the natural transformation $J_{M,N}$ (\ref{eqn:JMN}) is an isomorphism with inverse
$$
\ArxivFigure{0399},$$
where $S_B$ is the convolution inverse of $\id_B$. Thus $J$ is a strong monoidal structure of $\Omega$.

\end{itemize}
\end{proof}

\section*{Acknowledgement}

The fundational results of this paper are established during a visit at University of Tsukuba, and the author is deeply grateful to Professor Akira Masuoka for motivations, guiding advice at the weekly seminars, as well as his considerate hospitality.
In addition, the author would like to thank Professors Shenglin Zhu and Gongxiang Liu for their valuable discussions.

ChatGPT was used to search for relavant notions and backgrounds in the literature, and to check the proofs in Section 5. The author takes sole responsibility for the contents of the paper.

\end{document}

%% file: figure-layout.tex
\UseRawInputEncoding

\usepackage{graphicx}
\newcommand{\ArxivImageBox}[5]{%
  \raisebox{-#4}[#3][#4]{%
    \makebox[#2][l]{\includegraphics[width=#2,height=#5]{figures/figure-#1.pdf}}%
  }%
}
\newcommand{\ArxivFigure}[1]{
    \relax\ifmmode
        {\vcenter{\hbox{
            \csname ArxivFigureData@#1\endcsname{}
        }}}
    \else
        \({\vcenter{\hbox{
            \csname ArxivFigureData@#1\endcsname{}
        }}}\)
    \fi
}

\expandafter\def\csname ArxivFigureData@0001\endcsname{\ArxivImageBox{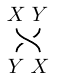}{1777170sp}{2541547sp}{0sp}{2541547sp}}
\expandafter\def\csname ArxivFigureData@0002\endcsname{\ArxivImageBox{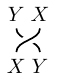}{1777170sp}{2541547sp}{0sp}{2541547sp}}
\expandafter\def\csname ArxivFigureData@0003\endcsname{\ArxivImageBox{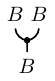}{1712772sp}{2541547sp}{0sp}{2541547sp}}
\expandafter\def\csname ArxivFigureData@0004\endcsname{\ArxivImageBox{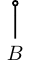}{966836sp}{2117528sp}{0sp}{2117528sp}}
\expandafter\def\csname ArxivFigureData@0005\endcsname{\ArxivImageBox{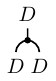}{1743584sp}{2541547sp}{0sp}{2541547sp}}
\expandafter\def\csname ArxivFigureData@0006\endcsname{\ArxivImageBox{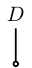}{997648sp}{2117552sp}{0sp}{2117552sp}}
\expandafter\def\csname ArxivFigureData@0007\endcsname{\ArxivImageBox{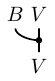}{1711748sp}{2541547sp}{0sp}{2541547sp}}
\expandafter\def\csname ArxivFigureData@0008\endcsname{\ArxivImageBox{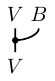}{1711747sp}{2541547sp}{0sp}{2541547sp}}
\expandafter\def\csname ArxivFigureData@0009\endcsname{\ArxivImageBox{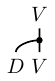}{1727154sp}{2541547sp}{0sp}{2541547sp}}
\expandafter\def\csname ArxivFigureData@0010\endcsname{\ArxivImageBox{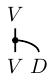}{1727153sp}{2541547sp}{0sp}{2541547sp}}
\expandafter\def\csname ArxivFigureData@0011\endcsname{\ArxivImageBox{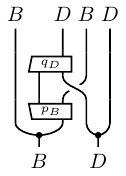}{3976700sp}{5547667sp}{0sp}{5547667sp}}
\expandafter\def\csname ArxivFigureData@0012\endcsname{\ArxivImageBox{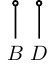}{1737952sp}{2127328sp}{0sp}{2127328sp}}
\expandafter\def\csname ArxivFigureData@0013\endcsname{\ArxivImageBox{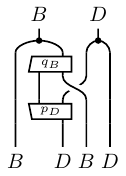}{3976600sp}{5548715sp}{0sp}{5548715sp}}
\expandafter\def\csname ArxivFigureData@0014\endcsname{\ArxivImageBox{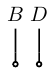}{1737977sp}{2127328sp}{0sp}{2127328sp}}
\expandafter\def\csname ArxivFigureData@0015\endcsname{\ArxivImageBox{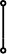}{201608sp}{1693794sp}{0sp}{1693794sp}}
\expandafter\def\csname ArxivFigureData@0016\endcsname{\ArxivImageBox{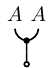}{1684020sp}{2127352sp}{0sp}{2127352sp}}
\expandafter\def\csname ArxivFigureData@0017\endcsname{\ArxivImageBox{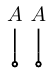}{1684115sp}{2127328sp}{0sp}{2127328sp}}
\expandafter\def\csname ArxivFigureData@0018\endcsname{\ArxivImageBox{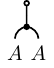}{1684020sp}{2127256sp}{0sp}{2127256sp}}
\expandafter\def\csname ArxivFigureData@0019\endcsname{\ArxivImageBox{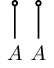}{1684090sp}{2127328sp}{0sp}{2127328sp}}
\expandafter\def\csname ArxivFigureData@0020\endcsname{\ArxivImageBox{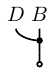}{1737882sp}{2127352sp}{0sp}{2127352sp}}
\expandafter\def\csname ArxivFigureData@0021\endcsname{\ArxivImageBox{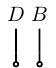}{1737977sp}{2127328sp}{0sp}{2127328sp}}
\expandafter\def\csname ArxivFigureData@0022\endcsname{\ArxivImageBox{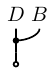}{1737881sp}{2127352sp}{0sp}{2127352sp}}
\expandafter\def\csname ArxivFigureData@0023\endcsname{\ArxivImageBox{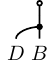}{1737882sp}{2127256sp}{0sp}{2127256sp}}
\expandafter\def\csname ArxivFigureData@0024\endcsname{\ArxivImageBox{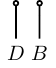}{1737952sp}{2127328sp}{0sp}{2127328sp}}
\expandafter\def\csname ArxivFigureData@0025\endcsname{\ArxivImageBox{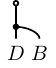}{1737881sp}{2127304sp}{0sp}{2127304sp}}
\expandafter\def\csname ArxivFigureData@0026\endcsname{\ArxivImageBox{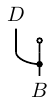}{1737882sp}{3306888sp}{0sp}{3306888sp}}
\expandafter\def\csname ArxivFigureData@0027\endcsname{\ArxivImageBox{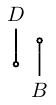}{1737894sp}{3306936sp}{0sp}{3306936sp}}
\expandafter\def\csname ArxivFigureData@0028\endcsname{\ArxivImageBox{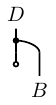}{1737943sp}{3306984sp}{0sp}{3306984sp}}
\expandafter\def\csname ArxivFigureData@0029\endcsname{\ArxivImageBox{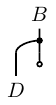}{1737894sp}{3306912sp}{0sp}{3306912sp}}
\expandafter\def\csname ArxivFigureData@0030\endcsname{\ArxivImageBox{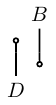}{1737943sp}{3306864sp}{0sp}{3306864sp}}
\expandafter\def\csname ArxivFigureData@0031\endcsname{\ArxivImageBox{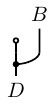}{1737980sp}{3306936sp}{0sp}{3306936sp}}
\expandafter\def\csname ArxivFigureData@0032\endcsname{\ArxivImageBox{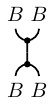}{1722476sp}{3306936sp}{0sp}{3306936sp}}
\expandafter\def\csname ArxivFigureData@0033\endcsname{\ArxivImageBox{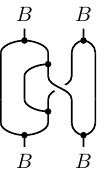}{3125433sp}{5547911sp}{0sp}{5547911sp}}
\expandafter\def\csname ArxivFigureData@0034\endcsname{\ArxivImageBox{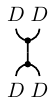}{1753288sp}{3306936sp}{0sp}{3306936sp}}
\expandafter\def\csname ArxivFigureData@0035\endcsname{\ArxivImageBox{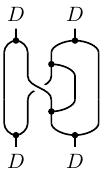}{3141583sp}{5547911sp}{0sp}{5547911sp}}
\expandafter\def\csname ArxivFigureData@0036\endcsname{\ArxivImageBox{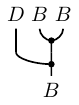}{2483912sp}{3306936sp}{0sp}{3306936sp}}
\expandafter\def\csname ArxivFigureData@0037\endcsname{\ArxivImageBox{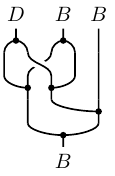}{3603068sp}{5547021sp}{0sp}{5547021sp}}
\expandafter\def\csname ArxivFigureData@0038\endcsname{\ArxivImageBox{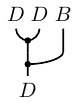}{2483916sp}{3306984sp}{0sp}{3306984sp}}
\expandafter\def\csname ArxivFigureData@0039\endcsname{\ArxivImageBox{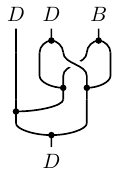}{3602991sp}{5546821sp}{0sp}{5546821sp}}
\expandafter\def\csname ArxivFigureData@0040\endcsname{\ArxivImageBox{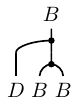}{2483924sp}{3307032sp}{0sp}{3307032sp}}
\expandafter\def\csname ArxivFigureData@0041\endcsname{\ArxivImageBox{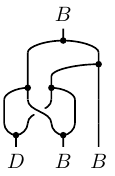}{3602956sp}{5547621sp}{0sp}{5547621sp}}
\expandafter\def\csname ArxivFigureData@0042\endcsname{\ArxivImageBox{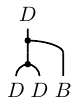}{2483854sp}{3306984sp}{0sp}{3306984sp}}
\expandafter\def\csname ArxivFigureData@0043\endcsname{\ArxivImageBox{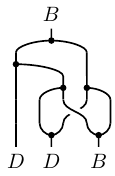}{3603003sp}{5547821sp}{0sp}{5547821sp}}
\expandafter\def\csname ArxivFigureData@0044\endcsname{\ArxivImageBox{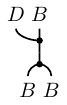}{2110867sp}{3306936sp}{0sp}{3306936sp}}
\expandafter\def\csname ArxivFigureData@0045\endcsname{\ArxivImageBox{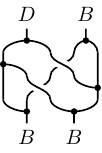}{3200053sp}{4800214sp}{0sp}{4800214sp}}
\expandafter\def\csname ArxivFigureData@0046\endcsname{\ArxivImageBox{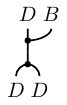}{2110827sp}{3306936sp}{0sp}{3306936sp}}
\expandafter\def\csname ArxivFigureData@0047\endcsname{\ArxivImageBox{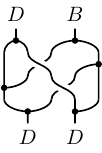}{3215614sp}{4800266sp}{0sp}{4800266sp}}
\expandafter\def\csname ArxivFigureData@0048\endcsname{\ArxivImageBox{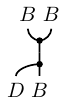}{2110867sp}{3306984sp}{0sp}{3306984sp}}
\expandafter\def\csname ArxivFigureData@0049\endcsname{\ArxivImageBox{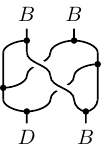}{3200053sp}{4800118sp}{0sp}{4800118sp}}
\expandafter\def\csname ArxivFigureData@0050\endcsname{\ArxivImageBox{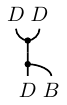}{2110827sp}{3306984sp}{0sp}{3306984sp}}
\expandafter\def\csname ArxivFigureData@0051\endcsname{\ArxivImageBox{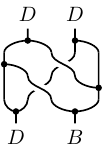}{3215614sp}{4800066sp}{0sp}{4800066sp}}
\expandafter\def\csname ArxivFigureData@0052\endcsname{\ArxivImageBox{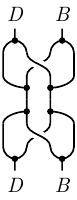}{2416579sp}{6294111sp}{0sp}{6294111sp}}
\expandafter\def\csname ArxivFigureData@0053\endcsname{\ArxivImageBox{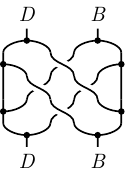}{3931732sp}{5547273sp}{0sp}{5547273sp}}
\expandafter\def\csname ArxivFigureData@0054\endcsname{\ArxivImageBox{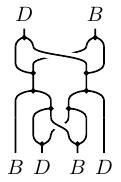}{3789394sp}{5734598sp}{0sp}{5734598sp}}
\expandafter\def\csname ArxivFigureData@0055\endcsname{\ArxivImageBox{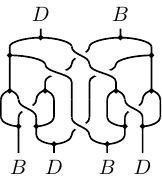}{5089390sp}{5736491sp}{0sp}{5736491sp}}
\expandafter\def\csname ArxivFigureData@0056\endcsname{\ArxivImageBox{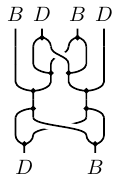}{3789456sp}{5733743sp}{0sp}{5733743sp}}
\expandafter\def\csname ArxivFigureData@0057\endcsname{\ArxivImageBox{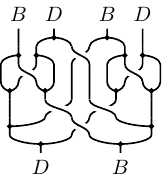}{5089390sp}{5735612sp}{0sp}{5735612sp}}
\expandafter\def\csname ArxivFigureData@0058\endcsname{\ArxivImageBox{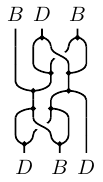}{3229849sp}{5734219sp}{0sp}{5734219sp}}
\expandafter\def\csname ArxivFigureData@0059\endcsname{\ArxivImageBox{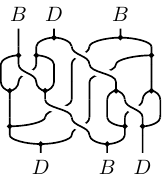}{5089048sp}{5736065sp}{0sp}{5736065sp}}
\expandafter\def\csname ArxivFigureData@0060\endcsname{\ArxivImageBox{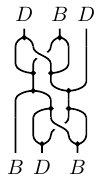}{3229911sp}{5734268sp}{0sp}{5734268sp}}
\expandafter\def\csname ArxivFigureData@0061\endcsname{\ArxivImageBox{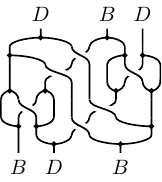}{5089048sp}{5736038sp}{0sp}{5736038sp}}
\expandafter\def\csname ArxivFigureData@0062\endcsname{\ArxivImageBox{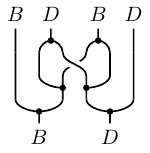}{4722215sp}{4800534sp}{0sp}{4800534sp}}
\expandafter\def\csname ArxivFigureData@0063\endcsname{\ArxivImageBox{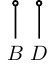}{1737952sp}{2127328sp}{0sp}{2127328sp}}
\expandafter\def\csname ArxivFigureData@0064\endcsname{\ArxivImageBox{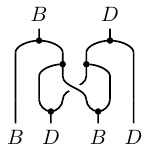}{4722090sp}{4801056sp}{0sp}{4801056sp}}
\expandafter\def\csname ArxivFigureData@0065\endcsname{\ArxivImageBox{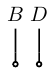}{1737977sp}{2127328sp}{0sp}{2127328sp}}
\expandafter\def\csname ArxivFigureData@0066\endcsname{\ArxivImageBox{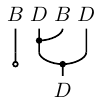}{3229993sp}{3307008sp}{0sp}{3307008sp}}
\expandafter\def\csname ArxivFigureData@0067\endcsname{\ArxivImageBox{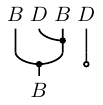}{3230219sp}{3306936sp}{0sp}{3306936sp}}
\expandafter\def\csname ArxivFigureData@0068\endcsname{\ArxivImageBox{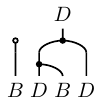}{3229918sp}{3307128sp}{0sp}{3307128sp}}
\expandafter\def\csname ArxivFigureData@0069\endcsname{\ArxivImageBox{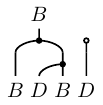}{3230144sp}{3307128sp}{0sp}{3307128sp}}
\expandafter\def\csname ArxivFigureData@0070\endcsname{\ArxivImageBox{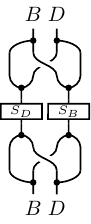}{2842380sp}{7041348sp}{0sp}{7041348sp}}
\expandafter\def\csname ArxivFigureData@0071\endcsname{\ArxivImageBox{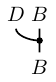}{1737882sp}{2560955sp}{0sp}{2560955sp}}
\expandafter\def\csname ArxivFigureData@0072\endcsname{\ArxivImageBox{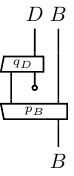}{2331108sp}{5546187sp}{0sp}{5546187sp}}
\expandafter\def\csname ArxivFigureData@0073\endcsname{\ArxivImageBox{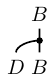}{1737882sp}{2560955sp}{0sp}{2560955sp}}
\expandafter\def\csname ArxivFigureData@0074\endcsname{\ArxivImageBox{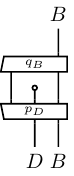}{2331046sp}{5546491sp}{0sp}{5546491sp}}
\expandafter\def\csname ArxivFigureData@0075\endcsname{\ArxivImageBox{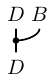}{1737881sp}{2560955sp}{0sp}{2560955sp}}
\expandafter\def\csname ArxivFigureData@0076\endcsname{\ArxivImageBox{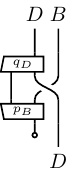}{2346354sp}{5546583sp}{0sp}{5546583sp}}
\expandafter\def\csname ArxivFigureData@0077\endcsname{\ArxivImageBox{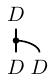}{1753287sp}{2560955sp}{0sp}{2560955sp}}
\expandafter\def\csname ArxivFigureData@0078\endcsname{\ArxivImageBox{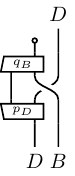}{2346416sp}{5546683sp}{0sp}{5546683sp}}
\expandafter\def\csname ArxivFigureData@0079\endcsname{\ArxivImageBox{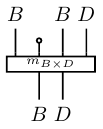}{3230317sp}{4053249sp}{0sp}{4053249sp}}
\expandafter\def\csname ArxivFigureData@0080\endcsname{\ArxivImageBox{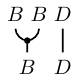}{2483916sp}{2560955sp}{0sp}{2560955sp}}
\expandafter\def\csname ArxivFigureData@0081\endcsname{\ArxivImageBox{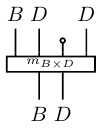}{3230269sp}{4053249sp}{0sp}{4053249sp}}
\expandafter\def\csname ArxivFigureData@0082\endcsname{\ArxivImageBox{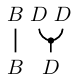}{2483924sp}{2560955sp}{0sp}{2560955sp}}
\expandafter\def\csname ArxivFigureData@0083\endcsname{\ArxivImageBox{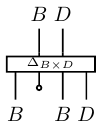}{3230242sp}{4053499sp}{0sp}{4053499sp}}
\expandafter\def\csname ArxivFigureData@0084\endcsname{\ArxivImageBox{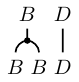}{2483916sp}{2560955sp}{0sp}{2560955sp}}
\expandafter\def\csname ArxivFigureData@0085\endcsname{\ArxivImageBox{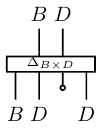}{3230194sp}{4053499sp}{0sp}{4053499sp}}
\expandafter\def\csname ArxivFigureData@0086\endcsname{\ArxivImageBox{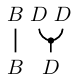}{2483924sp}{2560955sp}{0sp}{2560955sp}}
\expandafter\def\csname ArxivFigureData@0087\endcsname{\ArxivImageBox{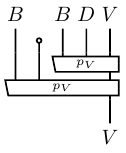}{3960294sp}{4799496sp}{0sp}{4799496sp}}
\expandafter\def\csname ArxivFigureData@0088\endcsname{\ArxivImageBox{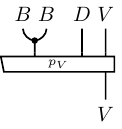}{3822462sp}{4053097sp}{0sp}{4053097sp}}
\expandafter\def\csname ArxivFigureData@0089\endcsname{\ArxivImageBox{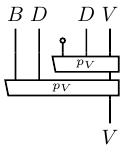}{3960246sp}{4799496sp}{0sp}{4799496sp}}
\expandafter\def\csname ArxivFigureData@0090\endcsname{\ArxivImageBox{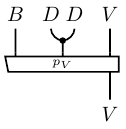}{3959836sp}{4053097sp}{0sp}{4053097sp}}
\expandafter\def\csname ArxivFigureData@0091\endcsname{\ArxivImageBox{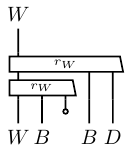}{4066575sp}{4800666sp}{0sp}{4800666sp}}
\expandafter\def\csname ArxivFigureData@0092\endcsname{\ArxivImageBox{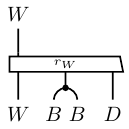}{4066165sp}{4053499sp}{0sp}{4053499sp}}
\expandafter\def\csname ArxivFigureData@0093\endcsname{\ArxivImageBox{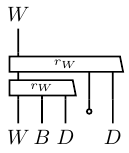}{4066523sp}{4800666sp}{0sp}{4800666sp}}
\expandafter\def\csname ArxivFigureData@0094\endcsname{\ArxivImageBox{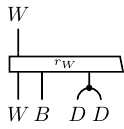}{3912596sp}{4053499sp}{0sp}{4053499sp}}
\expandafter\def\csname ArxivFigureData@0095\endcsname{\ArxivImageBox{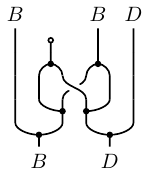}{4713205sp}{5528509sp}{0sp}{5528509sp}}
\expandafter\def\csname ArxivFigureData@0096\endcsname{\ArxivImageBox{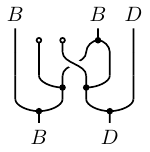}{4712661sp}{4781126sp}{0sp}{4781126sp}}
\expandafter\def\csname ArxivFigureData@0097\endcsname{\ArxivImageBox{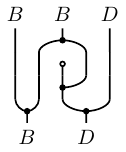}{3966302sp}{4781176sp}{0sp}{4781176sp}}
\expandafter\def\csname ArxivFigureData@0098\endcsname{\ArxivImageBox{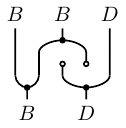}{3966302sp}{4034149sp}{0sp}{4034149sp}}
\expandafter\def\csname ArxivFigureData@0099\endcsname{\ArxivImageBox{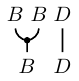}{2474212sp}{2541547sp}{0sp}{2541547sp}}
\expandafter\def\csname ArxivFigureData@0100\endcsname{\ArxivImageBox{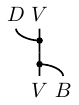}{2483817sp}{3306936sp}{0sp}{3306936sp}}
\expandafter\def\csname ArxivFigureData@0101\endcsname{\ArxivImageBox{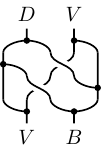}{3185764sp}{4800214sp}{0sp}{4800214sp}}
\expandafter\def\csname ArxivFigureData@0102\endcsname{\ArxivImageBox{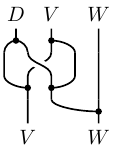}{3693089sp}{4800460sp}{0sp}{4800460sp}}
\expandafter\def\csname ArxivFigureData@0103\endcsname{\ArxivImageBox{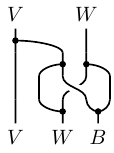}{3586858sp}{4800760sp}{0sp}{4800760sp}}
\expandafter\def\csname ArxivFigureData@0104\endcsname{\ArxivImageBox{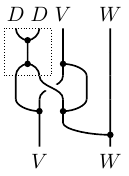}{4056627sp}{5528313sp}{0sp}{5528313sp}}
\expandafter\def\csname ArxivFigureData@0105\endcsname{\ArxivImageBox{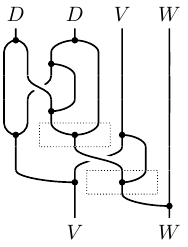}{5922290sp}{7775774sp}{0sp}{7775774sp}}
\expandafter\def\csname ArxivFigureData@0106\endcsname{\ArxivImageBox{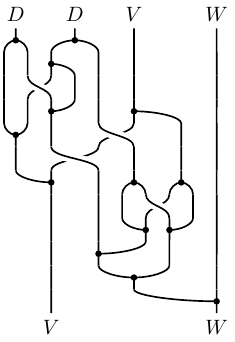}{7414597sp}{10784187sp}{0sp}{10784187sp}}
\expandafter\def\csname ArxivFigureData@0107\endcsname{\ArxivImageBox{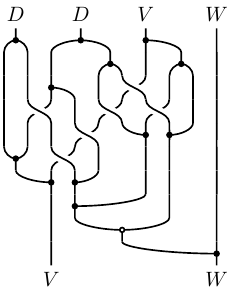}{7414324sp}{9279679sp}{0sp}{9279679sp}}
\expandafter\def\csname ArxivFigureData@0108\endcsname{\ArxivImageBox{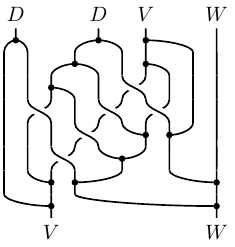}{7414062sp}{7777750sp}{0sp}{7777750sp}}
\expandafter\def\csname ArxivFigureData@0109\endcsname{\ArxivImageBox{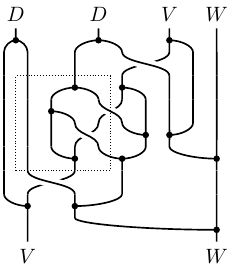}{7413913sp}{8527229sp}{0sp}{8527229sp}}
\expandafter\def\csname ArxivFigureData@0110\endcsname{\ArxivImageBox{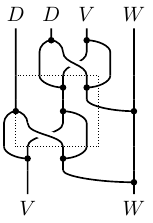}{4802750sp}{7025223sp}{0sp}{7025223sp}}
\expandafter\def\csname ArxivFigureData@0111\endcsname{\ArxivImageBox{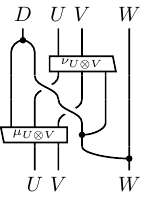}{4654956sp}{6277137sp}{0sp}{6277137sp}}
\expandafter\def\csname ArxivFigureData@0112\endcsname{\ArxivImageBox{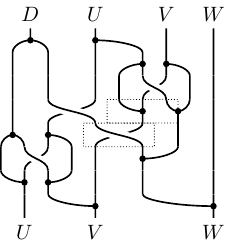}{7315176sp}{7779650sp}{0sp}{7779650sp}}
\expandafter\def\csname ArxivFigureData@0113\endcsname{\ArxivImageBox{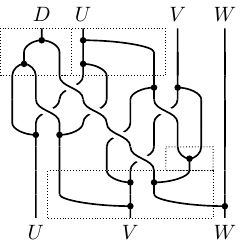}{7674859sp}{7780292sp}{0sp}{7780292sp}}
\expandafter\def\csname ArxivFigureData@0114\endcsname{\ArxivImageBox{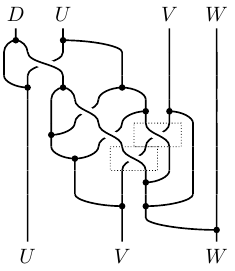}{7414860sp}{8530386sp}{0sp}{8530386sp}}
\expandafter\def\csname ArxivFigureData@0115\endcsname{\ArxivImageBox{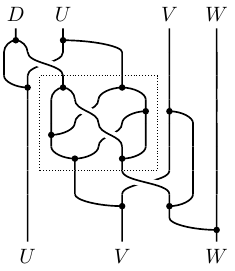}{7414860sp}{8530636sp}{0sp}{8530636sp}}
\expandafter\def\csname ArxivFigureData@0116\endcsname{\ArxivImageBox{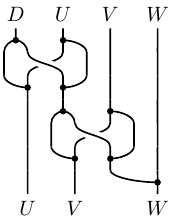}{5549271sp}{7027391sp}{0sp}{7027391sp}}
\expandafter\def\csname ArxivFigureData@0117\endcsname{\ArxivImageBox{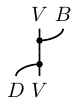}{2483817sp}{3306936sp}{0sp}{3306936sp}}
\expandafter\def\csname ArxivFigureData@0118\endcsname{\ArxivImageBox{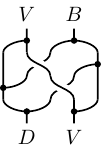}{3185764sp}{4800314sp}{0sp}{4800314sp}}
\expandafter\def\csname ArxivFigureData@0119\endcsname{\ArxivImageBox{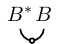}{1873118sp}{1384732sp}{0sp}{1384732sp}}
\expandafter\def\csname ArxivFigureData@0120\endcsname{\ArxivImageBox{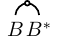}{1873118sp}{1384719sp}{0sp}{1384719sp}}
\expandafter\def\csname ArxivFigureData@0121\endcsname{\ArxivImageBox{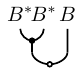}{2619152sp}{2130796sp}{0sp}{2130796sp}}
\expandafter\def\csname ArxivFigureData@0122\endcsname{\ArxivImageBox{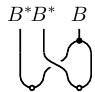}{2992338sp}{2877188sp}{0sp}{2877188sp}}
\expandafter\def\csname ArxivFigureData@0123\endcsname{\ArxivImageBox{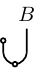}{1335052sp}{2127256sp}{0sp}{2127256sp}}
\expandafter\def\csname ArxivFigureData@0124\endcsname{\ArxivImageBox{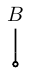}{976540sp}{2127256sp}{0sp}{2127256sp}}
\expandafter\def\csname ArxivFigureData@0125\endcsname{\ArxivImageBox{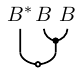}{2619148sp}{2130700sp}{0sp}{2130700sp}}
\expandafter\def\csname ArxivFigureData@0126\endcsname{\ArxivImageBox{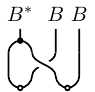}{2992465sp}{2877236sp}{0sp}{2877236sp}}
\expandafter\def\csname ArxivFigureData@0127\endcsname{\ArxivImageBox{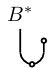}{1485777sp}{2130700sp}{0sp}{2130700sp}}
\expandafter\def\csname ArxivFigureData@0128\endcsname{\ArxivImageBox{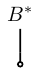}{1277824sp}{2130700sp}{0sp}{2130700sp}}
\expandafter\def\csname ArxivFigureData@0129\endcsname{\ArxivImageBox{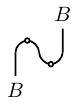}{2468522sp}{3306864sp}{0sp}{3306864sp}}
\expandafter\def\csname ArxivFigureData@0130\endcsname{\ArxivImageBox{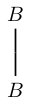}{976552sp}{3306864sp}{0sp}{3306864sp}}
\expandafter\def\csname ArxivFigureData@0131\endcsname{\ArxivImageBox{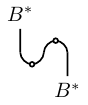}{2769732sp}{3313824sp}{0sp}{3313824sp}}
\expandafter\def\csname ArxivFigureData@0132\endcsname{\ArxivImageBox{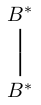}{1277836sp}{3313752sp}{0sp}{3313752sp}}
\expandafter\def\csname ArxivFigureData@0133\endcsname{\ArxivImageBox{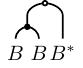}{2619090sp}{2130748sp}{0sp}{2130748sp}}
\expandafter\def\csname ArxivFigureData@0134\endcsname{\ArxivImageBox{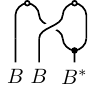}{2992350sp}{2877403sp}{0sp}{2877403sp}}
\expandafter\def\csname ArxivFigureData@0135\endcsname{\ArxivImageBox{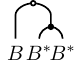}{2619160sp}{2130796sp}{0sp}{2130796sp}}
\expandafter\def\csname ArxivFigureData@0136\endcsname{\ArxivImageBox{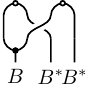}{2992378sp}{2877303sp}{0sp}{2877303sp}}
\expandafter\def\csname ArxivFigureData@0137\endcsname{\ArxivImageBox{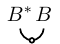}{1863414sp}{1375028sp}{0sp}{1375028sp}}
\expandafter\def\csname ArxivFigureData@0138\endcsname{\ArxivImageBox{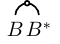}{1863414sp}{1375015sp}{0sp}{1375015sp}}
\expandafter\def\csname ArxivFigureData@0139\endcsname{\ArxivImageBox{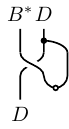}{2157195sp}{4056489sp}{0sp}{4056489sp}}
\expandafter\def\csname ArxivFigureData@0140\endcsname{\ArxivImageBox{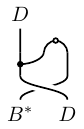}{2634459sp}{4056587sp}{0sp}{4056587sp}}
\expandafter\def\csname ArxivFigureData@0141\endcsname{\ArxivImageBox{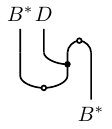}{3515692sp}{4060227sp}{0sp}{4060227sp}}
\expandafter\def\csname ArxivFigureData@0142\endcsname{\ArxivImageBox{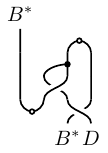}{3380494sp}{4806980sp}{0sp}{4806980sp}}
\expandafter\def\csname ArxivFigureData@0143\endcsname{\ArxivImageBox{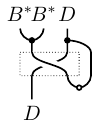}{2898401sp}{4037277sp}{0sp}{4037277sp}}
\expandafter\def\csname ArxivFigureData@0144\endcsname{\ArxivImageBox{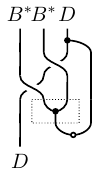}{2898663sp}{5530711sp}{0sp}{5530711sp}}
\expandafter\def\csname ArxivFigureData@0145\endcsname{\ArxivImageBox{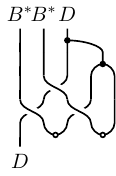}{3644935sp}{5530933sp}{0sp}{5530933sp}}
\expandafter\def\csname ArxivFigureData@0146\endcsname{\ArxivImageBox{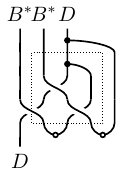}{3644935sp}{5530981sp}{0sp}{5530981sp}}
\expandafter\def\csname ArxivFigureData@0147\endcsname{\ArxivImageBox{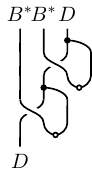}{2898487sp}{5530565sp}{0sp}{5530565sp}}
\expandafter\def\csname ArxivFigureData@0148\endcsname{\ArxivImageBox{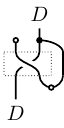}{2017119sp}{4033685sp}{0sp}{4033685sp}}
\expandafter\def\csname ArxivFigureData@0149\endcsname{\ArxivImageBox{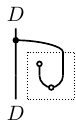}{2377136sp}{4033781sp}{0sp}{4033781sp}}
\expandafter\def\csname ArxivFigureData@0150\endcsname{\ArxivImageBox{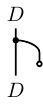}{1345738sp}{3287504sp}{0sp}{3287504sp}}
\expandafter\def\csname ArxivFigureData@0151\endcsname{\ArxivImageBox{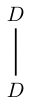}{997660sp}{3287456sp}{0sp}{3287456sp}}
\expandafter\def\csname ArxivFigureData@0152\endcsname{\ArxivImageBox{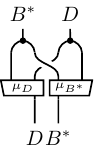}{2939350sp}{4806878sp}{0sp}{4806878sp}}
\expandafter\def\csname ArxivFigureData@0153\endcsname{\ArxivImageBox{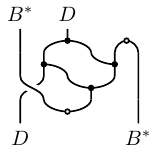}{5007932sp}{4807420sp}{0sp}{4807420sp}}
\expandafter\def\csname ArxivFigureData@0154\endcsname{\ArxivImageBox{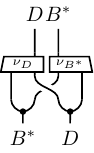}{2939250sp}{4806590sp}{0sp}{4806590sp}}
\expandafter\def\csname ArxivFigureData@0155\endcsname{\ArxivImageBox{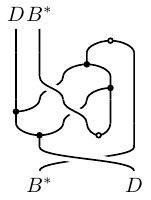}{4737078sp}{6301814sp}{0sp}{6301814sp}}
\expandafter\def\csname ArxivFigureData@0156\endcsname{\ArxivImageBox{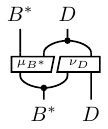}{3380456sp}{4059959sp}{0sp}{4059959sp}}
\expandafter\def\csname ArxivFigureData@0157\endcsname{\ArxivImageBox{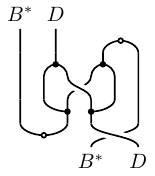}{4872414sp}{5554805sp}{0sp}{5554805sp}}
\expandafter\def\csname ArxivFigureData@0158\endcsname{\ArxivImageBox{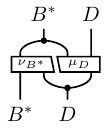}{3380555sp}{4059957sp}{0sp}{4059957sp}}
\expandafter\def\csname ArxivFigureData@0159\endcsname{\ArxivImageBox{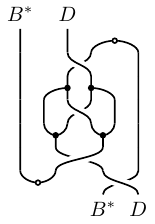}{4872562sp}{7051045sp}{0sp}{7051045sp}}
\expandafter\def\csname ArxivFigureData@0160\endcsname{\ArxivImageBox{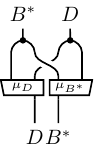}{2939350sp}{4787470sp}{0sp}{4787470sp}}
\expandafter\def\csname ArxivFigureData@0161\endcsname{\ArxivImageBox{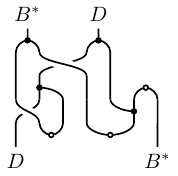}{5608989sp}{5536319sp}{0sp}{5536319sp}}
\expandafter\def\csname ArxivFigureData@0162\endcsname{\ArxivImageBox{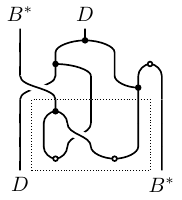}{5744213sp}{6284876sp}{0sp}{6284876sp}}
\expandafter\def\csname ArxivFigureData@0163\endcsname{\ArxivImageBox{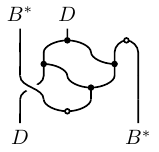}{4998228sp}{4788012sp}{0sp}{4788012sp}}
\expandafter\def\csname ArxivFigureData@0164\endcsname{\ArxivImageBox{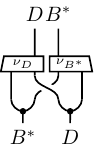}{2939250sp}{4787182sp}{0sp}{4787182sp}}
\expandafter\def\csname ArxivFigureData@0165\endcsname{\ArxivImageBox{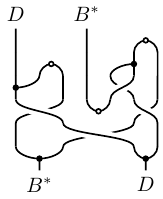}{5132196sp}{6282258sp}{0sp}{6282258sp}}
\expandafter\def\csname ArxivFigureData@0166\endcsname{\ArxivImageBox{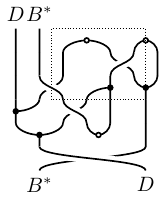}{5100343sp}{6282848sp}{0sp}{6282848sp}}
\expandafter\def\csname ArxivFigureData@0167\endcsname{\ArxivImageBox{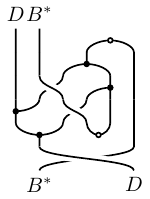}{4727374sp}{6282406sp}{0sp}{6282406sp}}
\expandafter\def\csname ArxivFigureData@0168\endcsname{\ArxivImageBox{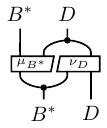}{3370752sp}{4040551sp}{0sp}{4040551sp}}
\expandafter\def\csname ArxivFigureData@0169\endcsname{\ArxivImageBox{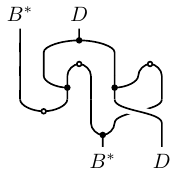}{5609107sp}{5535099sp}{0sp}{5535099sp}}
\expandafter\def\csname ArxivFigureData@0170\endcsname{\ArxivImageBox{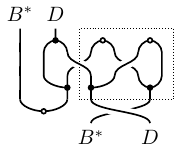}{5496413sp}{4787892sp}{0sp}{4787892sp}}
\expandafter\def\csname ArxivFigureData@0171\endcsname{\ArxivImageBox{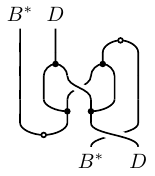}{4862710sp}{5535397sp}{0sp}{5535397sp}}
\expandafter\def\csname ArxivFigureData@0172\endcsname{\ArxivImageBox{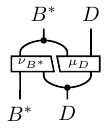}{3370851sp}{4040549sp}{0sp}{4040549sp}}
\expandafter\def\csname ArxivFigureData@0173\endcsname{\ArxivImageBox{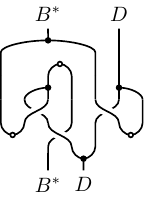}{4529635sp}{6283126sp}{0sp}{6283126sp}}
\expandafter\def\csname ArxivFigureData@0174\endcsname{\ArxivImageBox{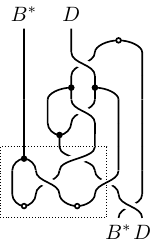}{4987915sp}{7781493sp}{0sp}{7781493sp}}
\expandafter\def\csname ArxivFigureData@0175\endcsname{\ArxivImageBox{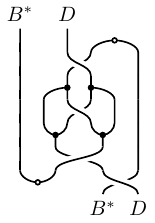}{4862858sp}{7031637sp}{0sp}{7031637sp}}
\expandafter\def\csname ArxivFigureData@0176\endcsname{\ArxivImageBox{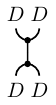}{1743584sp}{3287528sp}{0sp}{3287528sp}}
\expandafter\def\csname ArxivFigureData@0177\endcsname{\ArxivImageBox{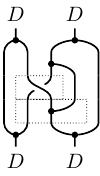}{3136731sp}{5528503sp}{0sp}{5528503sp}}
\expandafter\def\csname ArxivFigureData@0178\endcsname{\ArxivImageBox{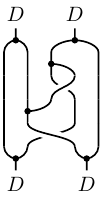}{3235705sp}{6276113sp}{0sp}{6276113sp}}
\expandafter\def\csname ArxivFigureData@0179\endcsname{\ArxivImageBox{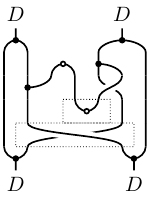}{4727675sp}{6276735sp}{0sp}{6276735sp}}
\expandafter\def\csname ArxivFigureData@0180\endcsname{\ArxivImageBox{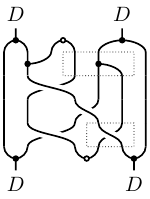}{4727899sp}{6277407sp}{0sp}{6277407sp}}
\expandafter\def\csname ArxivFigureData@0181\endcsname{\ArxivImageBox{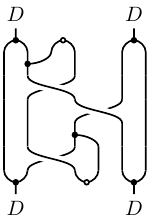}{4727925sp}{7026340sp}{0sp}{7026340sp}}
\expandafter\def\csname ArxivFigureData@0182\endcsname{\ArxivImageBox{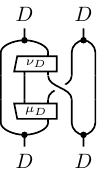}{3135987sp}{5528307sp}{0sp}{5528307sp}}
\expandafter\def\csname ArxivFigureData@0183\endcsname{\ArxivImageBox{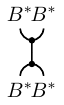}{2014056sp}{3294416sp}{0sp}{3294416sp}}
\expandafter\def\csname ArxivFigureData@0184\endcsname{\ArxivImageBox{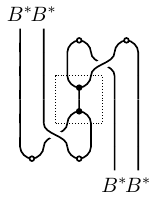}{4998604sp}{6284648sp}{0sp}{6284648sp}}
\expandafter\def\csname ArxivFigureData@0185\endcsname{\ArxivImageBox{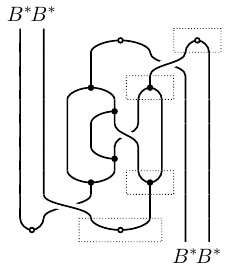}{7237360sp}{8538920sp}{0sp}{8538920sp}}
\expandafter\def\csname ArxivFigureData@0186\endcsname{\ArxivImageBox{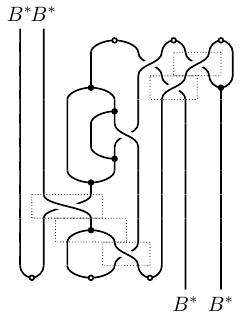}{7610616sp}{10047843sp}{0sp}{10047843sp}}
\expandafter\def\csname ArxivFigureData@0187\endcsname{\ArxivImageBox{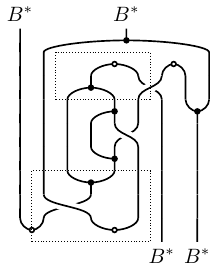}{6864280sp}{8538526sp}{0sp}{8538526sp}}
\expandafter\def\csname ArxivFigureData@0188\endcsname{\ArxivImageBox{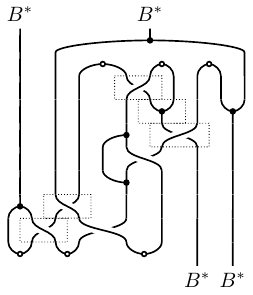}{7984336sp}{9296776sp}{0sp}{9296776sp}}
\expandafter\def\csname ArxivFigureData@0189\endcsname{\ArxivImageBox{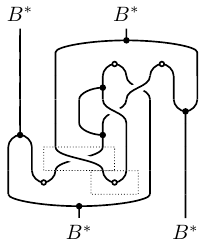}{6490167sp}{7785016sp}{0sp}{7785016sp}}
\expandafter\def\csname ArxivFigureData@0190\endcsname{\ArxivImageBox{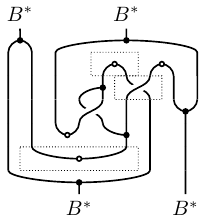}{6490189sp}{7034331sp}{0sp}{7034331sp}}
\expandafter\def\csname ArxivFigureData@0191\endcsname{\ArxivImageBox{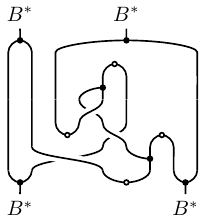}{6490417sp}{7034658sp}{0sp}{7034658sp}}
\expandafter\def\csname ArxivFigureData@0192\endcsname{\ArxivImageBox{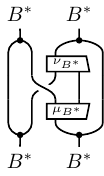}{3271967sp}{5535195sp}{0sp}{5535195sp}}
\expandafter\def\csname ArxivFigureData@0193\endcsname{\ArxivImageBox{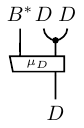}{2624850sp}{4037085sp}{0sp}{4037085sp}}
\expandafter\def\csname ArxivFigureData@0194\endcsname{\ArxivImageBox{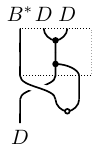}{2885018sp}{4783536sp}{0sp}{4783536sp}}
\expandafter\def\csname ArxivFigureData@0195\endcsname{\ArxivImageBox{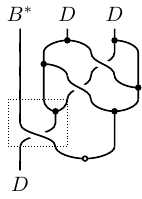}{4465226sp}{6277967sp}{0sp}{6277967sp}}
\expandafter\def\csname ArxivFigureData@0196\endcsname{\ArxivImageBox{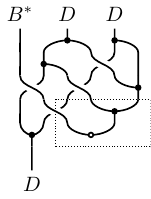}{4750006sp}{6277667sp}{0sp}{6277667sp}}
\expandafter\def\csname ArxivFigureData@0197\endcsname{\ArxivImageBox{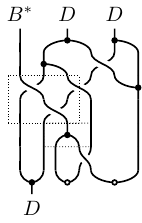}{4465008sp}{7029068sp}{0sp}{7029068sp}}
\expandafter\def\csname ArxivFigureData@0198\endcsname{\ArxivImageBox{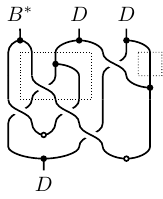}{5123918sp}{6279580sp}{0sp}{6279580sp}}
\expandafter\def\csname ArxivFigureData@0199\endcsname{\ArxivImageBox{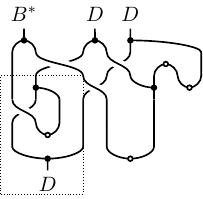}{6381367sp}{6280481sp}{0sp}{6280481sp}}
\expandafter\def\csname ArxivFigureData@0200\endcsname{\ArxivImageBox{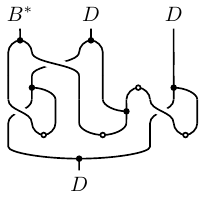}{6257342sp}{6280483sp}{0sp}{6280483sp}}
\expandafter\def\csname ArxivFigureData@0201\endcsname{\ArxivImageBox{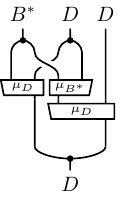}{3833610sp}{6278392sp}{0sp}{6278392sp}}
\expandafter\def\csname ArxivFigureData@0202\endcsname{\ArxivImageBox{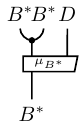}{2624854sp}{4040625sp}{0sp}{4040625sp}}
\expandafter\def\csname ArxivFigureData@0203\endcsname{\ArxivImageBox{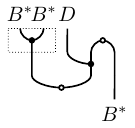}{4251984sp}{4041063sp}{0sp}{4041063sp}}
\expandafter\def\csname ArxivFigureData@0204\endcsname{\ArxivImageBox{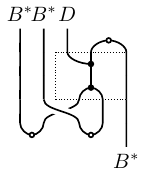}{4625212sp}{5535695sp}{0sp}{5535695sp}}
\expandafter\def\csname ArxivFigureData@0205\endcsname{\ArxivImageBox{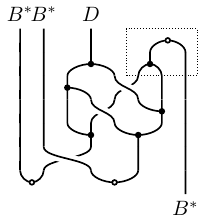}{6490452sp}{7034189sp}{0sp}{7034189sp}}
\expandafter\def\csname ArxivFigureData@0206\endcsname{\ArxivImageBox{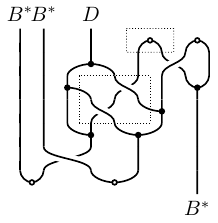}{6863433sp}{7034435sp}{0sp}{7034435sp}}
\expandafter\def\csname ArxivFigureData@0207\endcsname{\ArxivImageBox{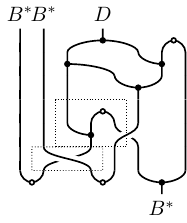}{5884179sp}{7034509sp}{0sp}{7034509sp}}
\expandafter\def\csname ArxivFigureData@0208\endcsname{\ArxivImageBox{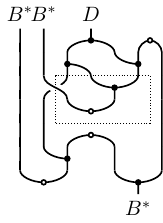}{5137482sp}{7032107sp}{0sp}{7032107sp}}
\expandafter\def\csname ArxivFigureData@0209\endcsname{\ArxivImageBox{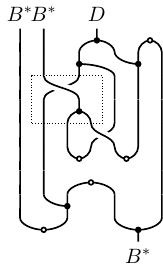}{5138029sp}{8532077sp}{0sp}{8532077sp}}
\expandafter\def\csname ArxivFigureData@0210\endcsname{\ArxivImageBox{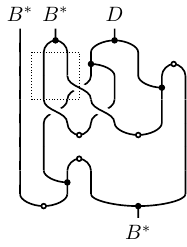}{5884156sp}{7781942sp}{0sp}{7781942sp}}
\expandafter\def\csname ArxivFigureData@0211\endcsname{\ArxivImageBox{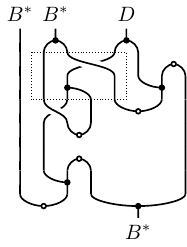}{5884178sp}{7781646sp}{0sp}{7781646sp}}
\expandafter\def\csname ArxivFigureData@0212\endcsname{\ArxivImageBox{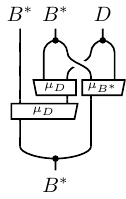}{3968843sp}{6281238sp}{0sp}{6281238sp}}
\expandafter\def\csname ArxivFigureData@0213\endcsname{\ArxivImageBox{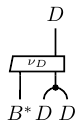}{2624862sp}{4036985sp}{0sp}{4036985sp}}
\expandafter\def\csname ArxivFigureData@0214\endcsname{\ArxivImageBox{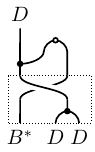}{2997902sp}{4783336sp}{0sp}{4783336sp}}
\expandafter\def\csname ArxivFigureData@0215\endcsname{\ArxivImageBox{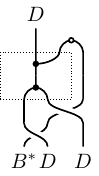}{3122859sp}{5530367sp}{0sp}{5530367sp}}
\expandafter\def\csname ArxivFigureData@0216\endcsname{\ArxivImageBox{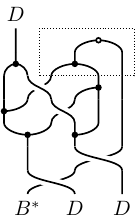}{4354720sp}{7026702sp}{0sp}{7026702sp}}
\expandafter\def\csname ArxivFigureData@0217\endcsname{\ArxivImageBox{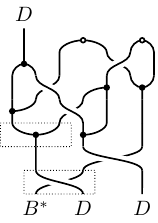}{4987939sp}{7027144sp}{0sp}{7027144sp}}
\expandafter\def\csname ArxivFigureData@0218\endcsname{\ArxivImageBox{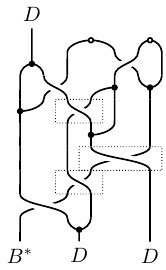}{5236111sp}{8524737sp}{0sp}{8524737sp}}
\expandafter\def\csname ArxivFigureData@0219\endcsname{\ArxivImageBox{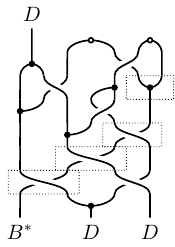}{5496599sp}{7774254sp}{0sp}{7774254sp}}
\expandafter\def\csname ArxivFigureData@0220\endcsname{\ArxivImageBox{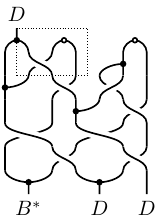}{5132084sp}{7026963sp}{0sp}{7026963sp}}
\expandafter\def\csname ArxivFigureData@0221\endcsname{\ArxivImageBox{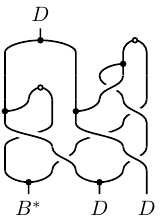}{5132084sp}{7027015sp}{0sp}{7027015sp}}
\expandafter\def\csname ArxivFigureData@0222\endcsname{\ArxivImageBox{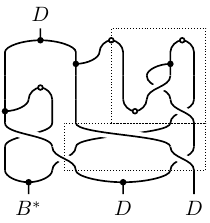}{6624004sp}{7027689sp}{0sp}{7027689sp}}
\expandafter\def\csname ArxivFigureData@0223\endcsname{\ArxivImageBox{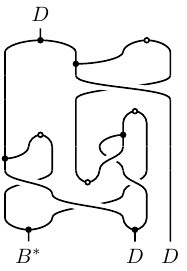}{5878343sp}{8526762sp}{0sp}{8526762sp}}
\expandafter\def\csname ArxivFigureData@0224\endcsname{\ArxivImageBox{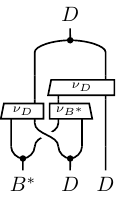}{3833498sp}{6278394sp}{0sp}{6278394sp}}
\expandafter\def\csname ArxivFigureData@0225\endcsname{\ArxivImageBox{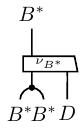}{2624792sp}{4040527sp}{0sp}{4040527sp}}
\expandafter\def\csname ArxivFigureData@0226\endcsname{\ArxivImageBox{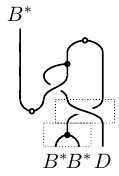}{3743843sp}{5534233sp}{0sp}{5534233sp}}
\expandafter\def\csname ArxivFigureData@0227\endcsname{\ArxivImageBox{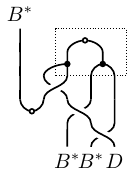}{4117070sp}{5535005sp}{0sp}{5535005sp}}
\expandafter\def\csname ArxivFigureData@0228\endcsname{\ArxivImageBox{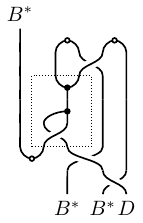}{4490210sp}{7031713sp}{0sp}{7031713sp}}
\expandafter\def\csname ArxivFigureData@0229\endcsname{\ArxivImageBox{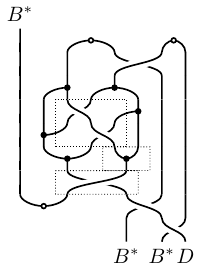}{6355614sp}{8532106sp}{0sp}{8532106sp}}
\expandafter\def\csname ArxivFigureData@0230\endcsname{\ArxivImageBox{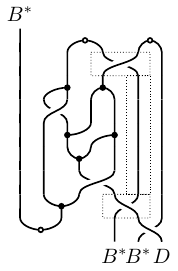}{5609427sp}{8534255sp}{0sp}{8534255sp}}
\expandafter\def\csname ArxivFigureData@0231\endcsname{\ArxivImageBox{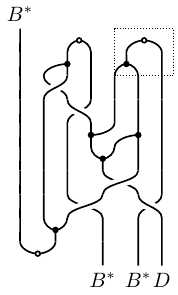}{5610136sp}{9285878sp}{0sp}{9285878sp}}
\expandafter\def\csname ArxivFigureData@0232\endcsname{\ArxivImageBox{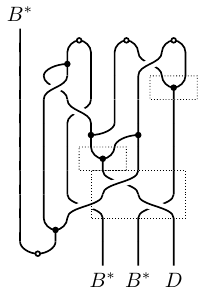}{6243692sp}{9287026sp}{0sp}{9287026sp}}
\expandafter\def\csname ArxivFigureData@0233\endcsname{\ArxivImageBox{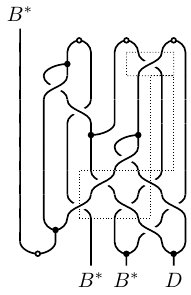}{6013861sp}{9287272sp}{0sp}{9287272sp}}
\expandafter\def\csname ArxivFigureData@0234\endcsname{\ArxivImageBox{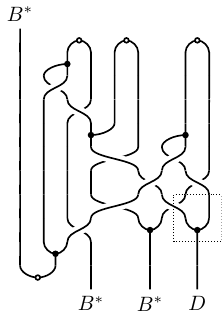}{6989404sp}{10039549sp}{0sp}{10039549sp}}
\expandafter\def\csname ArxivFigureData@0235\endcsname{\ArxivImageBox{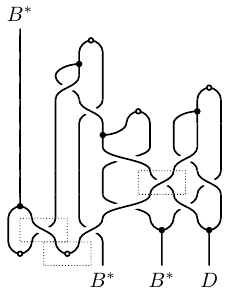}{7134319sp}{9288380sp}{0sp}{9288380sp}}
\expandafter\def\csname ArxivFigureData@0236\endcsname{\ArxivImageBox{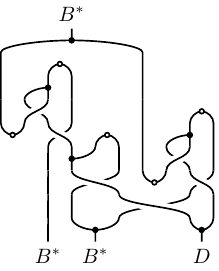}{6898311sp}{8532881sp}{0sp}{8532881sp}}
\expandafter\def\csname ArxivFigureData@0237\endcsname{\ArxivImageBox{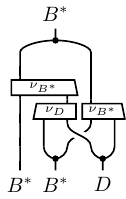}{3968730sp}{6282190sp}{0sp}{6282190sp}}
\expandafter\def\csname ArxivFigureData@0238\endcsname{\ArxivImageBox{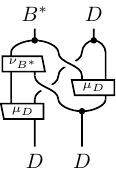}{3636994sp}{5531115sp}{0sp}{5531115sp}}
\expandafter\def\csname ArxivFigureData@0239\endcsname{\ArxivImageBox{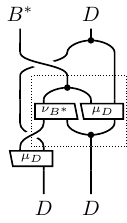}{4003901sp}{7025884sp}{0sp}{7025884sp}}
\expandafter\def\csname ArxivFigureData@0240\endcsname{\ArxivImageBox{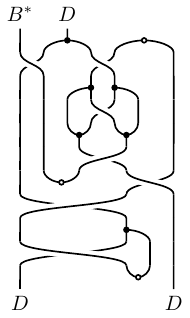}{5982044sp}{10025160sp}{0sp}{10025160sp}}
\expandafter\def\csname ArxivFigureData@0241\endcsname{\ArxivImageBox{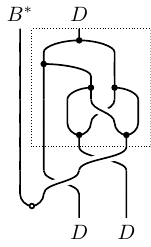}{4750330sp}{7777131sp}{0sp}{7777131sp}}
\expandafter\def\csname ArxivFigureData@0242\endcsname{\ArxivImageBox{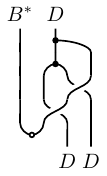}{3371048sp}{5531411sp}{0sp}{5531411sp}}
\expandafter\def\csname ArxivFigureData@0243\endcsname{\ArxivImageBox{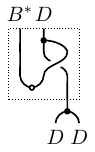}{2997796sp}{4783710sp}{0sp}{4783710sp}}
\expandafter\def\csname ArxivFigureData@0244\endcsname{\ArxivImageBox{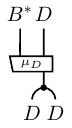}{2251811sp}{4037135sp}{0sp}{4037135sp}}
\expandafter\def\csname ArxivFigureData@0245\endcsname{\ArxivImageBox{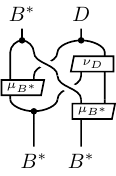}{3658494sp}{5534559sp}{0sp}{5534559sp}}
\expandafter\def\csname ArxivFigureData@0246\endcsname{\ArxivImageBox{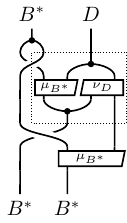}{4003913sp}{7029328sp}{0sp}{7029328sp}}
\expandafter\def\csname ArxivFigureData@0247\endcsname{\ArxivImageBox{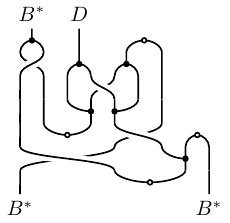}{7236197sp}{7032288sp}{0sp}{7032288sp}}
\expandafter\def\csname ArxivFigureData@0248\endcsname{\ArxivImageBox{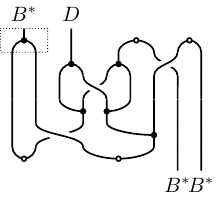}{6989048sp}{6286372sp}{0sp}{6286372sp}}
\expandafter\def\csname ArxivFigureData@0249\endcsname{\ArxivImageBox{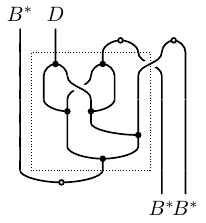}{6490850sp}{7034217sp}{0sp}{7034217sp}}
\expandafter\def\csname ArxivFigureData@0250\endcsname{\ArxivImageBox{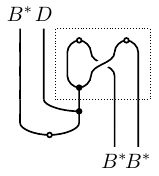}{4998628sp}{5536091sp}{0sp}{5536091sp}}
\expandafter\def\csname ArxivFigureData@0251\endcsname{\ArxivImageBox{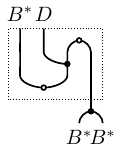}{3879019sp}{4787298sp}{0sp}{4787298sp}}
\expandafter\def\csname ArxivFigureData@0252\endcsname{\ArxivImageBox{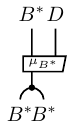}{2251971sp}{4040527sp}{0sp}{4040527sp}}
\expandafter\def\csname ArxivFigureData@0253\endcsname{\ArxivImageBox{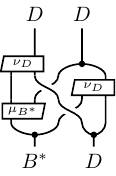}{3636994sp}{5530567sp}{0sp}{5530567sp}}
\expandafter\def\csname ArxivFigureData@0254\endcsname{\ArxivImageBox{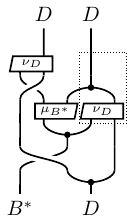}{4004107sp}{7024934sp}{0sp}{7024934sp}}
\expandafter\def\csname ArxivFigureData@0255\endcsname{\ArxivImageBox{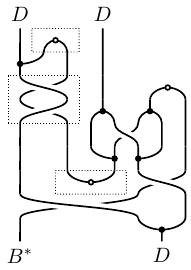}{6013401sp}{8524670sp}{0sp}{8524670sp}}
\expandafter\def\csname ArxivFigureData@0256\endcsname{\ArxivImageBox{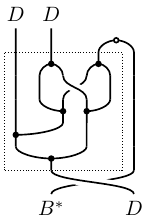}{4727399sp}{7026998sp}{0sp}{7026998sp}}
\expandafter\def\csname ArxivFigureData@0257\endcsname{\ArxivImageBox{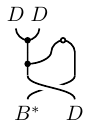}{2862508sp}{4037327sp}{0sp}{4037327sp}}
\expandafter\def\csname ArxivFigureData@0258\endcsname{\ArxivImageBox{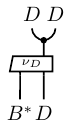}{2251823sp}{4037033sp}{0sp}{4037033sp}}
\expandafter\def\csname ArxivFigureData@0259\endcsname{\ArxivImageBox{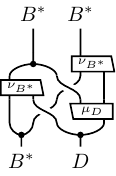}{3636749sp}{5534011sp}{0sp}{5534011sp}}
\expandafter\def\csname ArxivFigureData@0260\endcsname{\ArxivImageBox{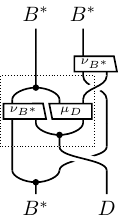}{3868874sp}{7028576sp}{0sp}{7028576sp}}
\expandafter\def\csname ArxivFigureData@0261\endcsname{\ArxivImageBox{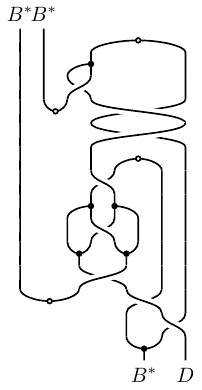}{6354928sp}{12283365sp}{0sp}{12283365sp}}
\expandafter\def\csname ArxivFigureData@0262\endcsname{\ArxivImageBox{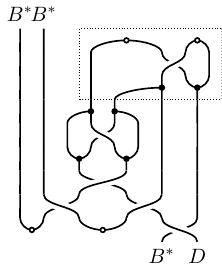}{6989407sp}{8536052sp}{0sp}{8536052sp}}
\expandafter\def\csname ArxivFigureData@0263\endcsname{\ArxivImageBox{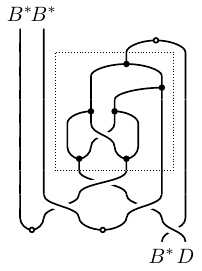}{6355290sp}{8534708sp}{0sp}{8534708sp}}
\expandafter\def\csname ArxivFigureData@0264\endcsname{\ArxivImageBox{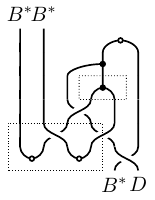}{4862910sp}{6283726sp}{0sp}{6283726sp}}
\expandafter\def\csname ArxivFigureData@0265\endcsname{\ArxivImageBox{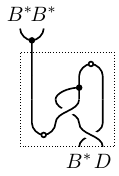}{3743817sp}{5535151sp}{0sp}{5535151sp}}
\expandafter\def\csname ArxivFigureData@0266\endcsname{\ArxivImageBox{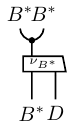}{2251921sp}{4040625sp}{0sp}{4040625sp}}
\expandafter\def\csname ArxivFigureData@0267\endcsname{\ArxivImageBox{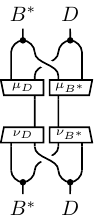}{2939350sp}{7028374sp}{0sp}{7028374sp}}
\expandafter\def\csname ArxivFigureData@0268\endcsname{\ArxivImageBox{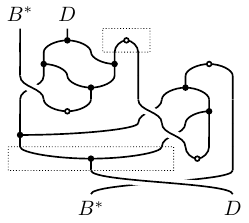}{7846451sp}{7031155sp}{0sp}{7031155sp}}
\expandafter\def\csname ArxivFigureData@0269\endcsname{\ArxivImageBox{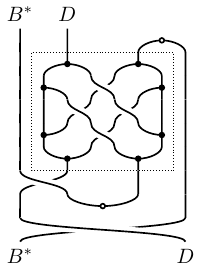}{6354431sp}{8529408sp}{0sp}{8529408sp}}
\expandafter\def\csname ArxivFigureData@0270\endcsname{\ArxivImageBox{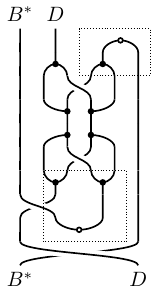}{4862561sp}{9278222sp}{0sp}{9278222sp}}
\expandafter\def\csname ArxivFigureData@0271\endcsname{\ArxivImageBox{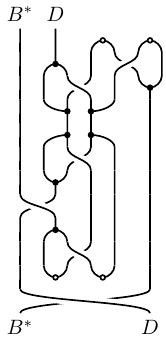}{5235529sp}{10783482sp}{0sp}{10783482sp}}
\expandafter\def\csname ArxivFigureData@0272\endcsname{\ArxivImageBox{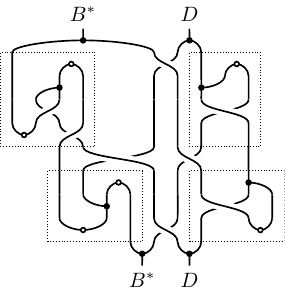}{8980646sp}{9286942sp}{0sp}{9286942sp}}
\expandafter\def\csname ArxivFigureData@0273\endcsname{\ArxivImageBox{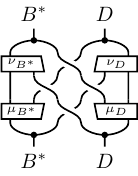}{4356170sp}{5534257sp}{0sp}{5534257sp}}
\expandafter\def\csname ArxivFigureData@0274\endcsname{\ArxivImageBox{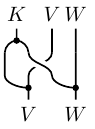}{2968572sp}{4053489sp}{0sp}{4053489sp}}
\expandafter\def\csname ArxivFigureData@0275\endcsname{\ArxivImageBox{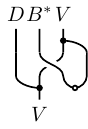}{2768103sp}{4056635sp}{0sp}{4056635sp}}
\expandafter\def\csname ArxivFigureData@0276\endcsname{\ArxivImageBox{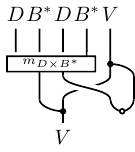}{4255503sp}{4784034sp}{0sp}{4784034sp}}
\expandafter\def\csname ArxivFigureData@0277\endcsname{\ArxivImageBox{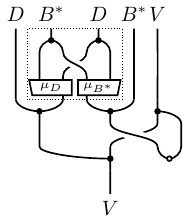}{5747804sp}{7026817sp}{0sp}{7026817sp}}
\expandafter\def\csname ArxivFigureData@0278\endcsname{\ArxivImageBox{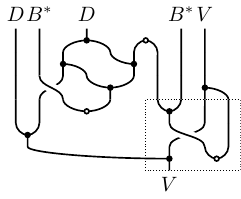}{7599376sp}{6280473sp}{0sp}{6280473sp}}
\expandafter\def\csname ArxivFigureData@0279\endcsname{\ArxivImageBox{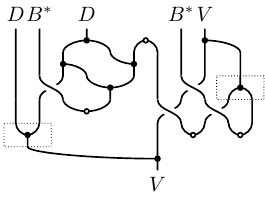}{8346317sp}{6280692sp}{0sp}{6280692sp}}
\expandafter\def\csname ArxivFigureData@0280\endcsname{\ArxivImageBox{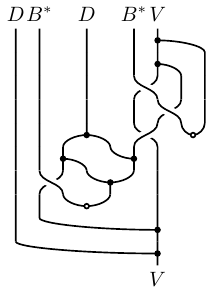}{6495542sp}{9275588sp}{0sp}{9275588sp}}
\expandafter\def\csname ArxivFigureData@0281\endcsname{\ArxivImageBox{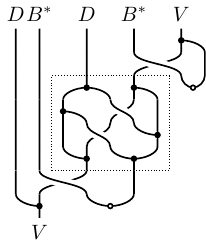}{6494572sp}{7776821sp}{0sp}{7776821sp}}
\expandafter\def\csname ArxivFigureData@0282\endcsname{\ArxivImageBox{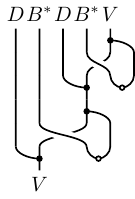}{4255919sp}{6278538sp}{0sp}{6278538sp}}
\expandafter\def\csname ArxivFigureData@0283\endcsname{\ArxivImageBox{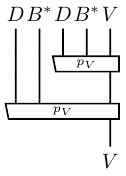}{3966094sp}{5530469sp}{0sp}{5530469sp}}
\expandafter\def\csname ArxivFigureData@0284\endcsname{\ArxivImageBox{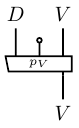}{2473308sp}{4033641sp}{0sp}{4033641sp}}
\expandafter\def\csname ArxivFigureData@0285\endcsname{\ArxivImageBox{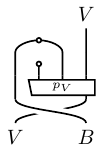}{3203618sp}{4780310sp}{0sp}{4780310sp}}
\expandafter\def\csname ArxivFigureData@0286\endcsname{\ArxivImageBox{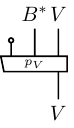}{2325232sp}{4037085sp}{0sp}{4037085sp}}
\expandafter\def\csname ArxivFigureData@0287\endcsname{\ArxivImageBox{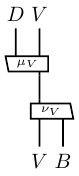}{2474322sp}{5526981sp}{0sp}{5526981sp}}
\expandafter\def\csname ArxivFigureData@0288\endcsname{\ArxivImageBox{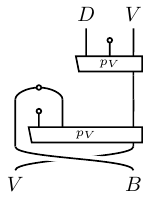}{4695489sp}{6274452sp}{0sp}{6274452sp}}
\expandafter\def\csname ArxivFigureData@0289\endcsname{\ArxivImageBox{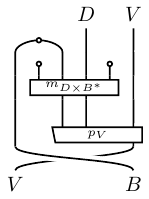}{4695489sp}{6275382sp}{0sp}{6275382sp}}
\expandafter\def\csname ArxivFigureData@0290\endcsname{\ArxivImageBox{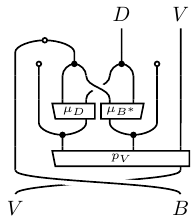}{6187741sp}{7024150sp}{0sp}{7024150sp}}
\expandafter\def\csname ArxivFigureData@0291\endcsname{\ArxivImageBox{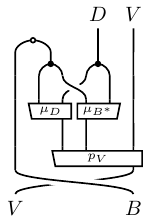}{4695545sp}{7023606sp}{0sp}{7023606sp}}
\expandafter\def\csname ArxivFigureData@0292\endcsname{\ArxivImageBox{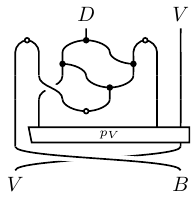}{6187359sp}{6275755sp}{0sp}{6275755sp}}
\expandafter\def\csname ArxivFigureData@0293\endcsname{\ArxivImageBox{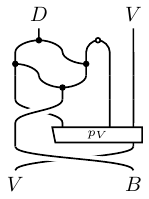}{4695489sp}{6275016sp}{0sp}{6275016sp}}
\expandafter\def\csname ArxivFigureData@0294\endcsname{\ArxivImageBox{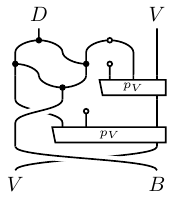}{5441424sp}{6274764sp}{0sp}{6274764sp}}
\expandafter\def\csname ArxivFigureData@0295\endcsname{\ArxivImageBox{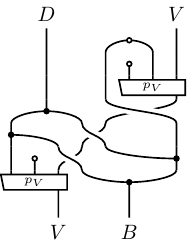}{6055643sp}{7771340sp}{0sp}{7771340sp}}
\expandafter\def\csname ArxivFigureData@0296\endcsname{\ArxivImageBox{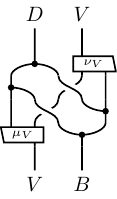}{3685234sp}{6274883sp}{0sp}{6274883sp}}
\expandafter\def\csname ArxivFigureData@0297\endcsname{\ArxivImageBox{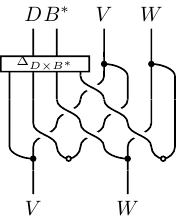}{5552828sp}{7029648sp}{0sp}{7029648sp}}
\expandafter\def\csname ArxivFigureData@0298\endcsname{\ArxivImageBox{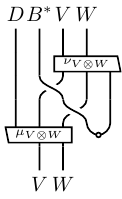}{3833506sp}{6278717sp}{0sp}{6278717sp}}
\expandafter\def\csname ArxivFigureData@0299\endcsname{\ArxivImageBox{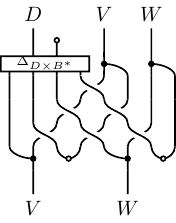}{5552828sp}{7026204sp}{0sp}{7026204sp}}
\expandafter\def\csname ArxivFigureData@0300\endcsname{\ArxivImageBox{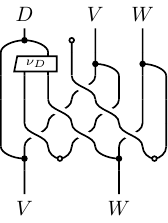}{5276855sp}{7026448sp}{0sp}{7026448sp}}
\expandafter\def\csname ArxivFigureData@0301\endcsname{\ArxivImageBox{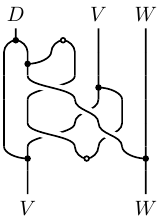}{5176130sp}{7025440sp}{0sp}{7025440sp}}
\expandafter\def\csname ArxivFigureData@0302\endcsname{\ArxivImageBox{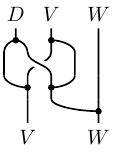}{3683385sp}{4781052sp}{0sp}{4781052sp}}
\expandafter\def\csname ArxivFigureData@0303\endcsname{\ArxivImageBox{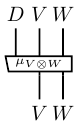}{2564429sp}{4033841sp}{0sp}{4033841sp}}
\expandafter\def\csname ArxivFigureData@0304\endcsname{\ArxivImageBox{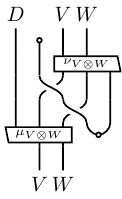}{3833506sp}{6275273sp}{0sp}{6275273sp}}
\expandafter\def\csname ArxivFigureData@0305\endcsname{\ArxivImageBox{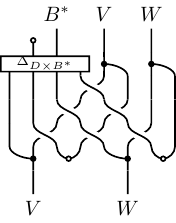}{5552828sp}{7029648sp}{0sp}{7029648sp}}
\expandafter\def\csname ArxivFigureData@0306\endcsname{\ArxivImageBox{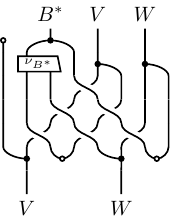}{5351414sp}{7029892sp}{0sp}{7029892sp}}
\expandafter\def\csname ArxivFigureData@0307\endcsname{\ArxivImageBox{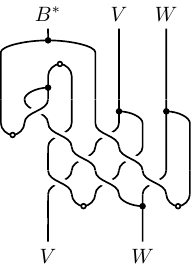}{6022844sp}{8529259sp}{0sp}{8529259sp}}
\expandafter\def\csname ArxivFigureData@0308\endcsname{\ArxivImageBox{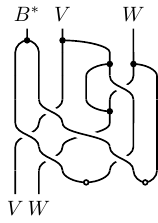}{4986103sp}{7028663sp}{0sp}{7028663sp}}
\expandafter\def\csname ArxivFigureData@0309\endcsname{\ArxivImageBox{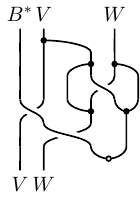}{4391155sp}{6279239sp}{0sp}{6279239sp}}
\expandafter\def\csname ArxivFigureData@0310\endcsname{\ArxivImageBox{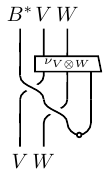}{3222584sp}{5531015sp}{0sp}{5531015sp}}
\expandafter\def\csname ArxivFigureData@0311\endcsname{\ArxivImageBox{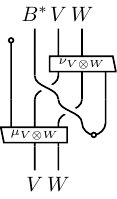}{3685378sp}{6278717sp}{0sp}{6278717sp}}
\expandafter\def\csname ArxivFigureData@0312\endcsname{\ArxivImageBox{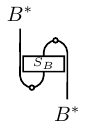}{2760028sp}{4040867sp}{0sp}{4040867sp}}
\expandafter\def\csname ArxivFigureData@0313\endcsname{\ArxivImageBox{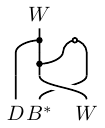}{3310155sp}{4037523sp}{0sp}{4037523sp}}
\expandafter\def\csname ArxivFigureData@0314\endcsname{\ArxivImageBox{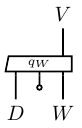}{2564281sp}{4033891sp}{0sp}{4033891sp}}
\expandafter\def\csname ArxivFigureData@0315\endcsname{\ArxivImageBox{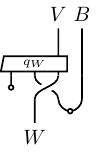}{3072365sp}{4780342sp}{0sp}{4780342sp}}
\expandafter\def\csname ArxivFigureData@0316\endcsname{\ArxivImageBox{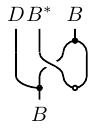}{2857102sp}{4056635sp}{0sp}{4056635sp}}
\expandafter\def\csname ArxivFigureData@0317\endcsname{\ArxivImageBox{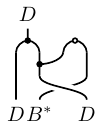}{3245266sp}{4056931sp}{0sp}{4056931sp}}
\expandafter\def\csname ArxivFigureData@0318\endcsname{\ArxivImageBox{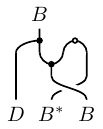}{3229933sp}{4056983sp}{0sp}{4056983sp}}
\expandafter\def\csname ArxivFigureData@0319\endcsname{\ArxivImageBox{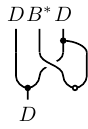}{2768103sp}{4056635sp}{0sp}{4056635sp}}
\expandafter\def\csname ArxivFigureData@0320\endcsname{\ArxivImageBox{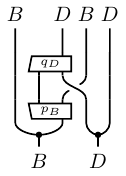}{3966996sp}{5528259sp}{0sp}{5528259sp}}
\expandafter\def\csname ArxivFigureData@0321\endcsname{\ArxivImageBox{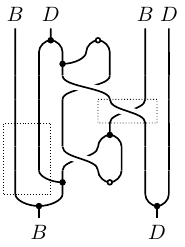}{5832474sp}{7777001sp}{0sp}{7777001sp}}
\expandafter\def\csname ArxivFigureData@0322\endcsname{\ArxivImageBox{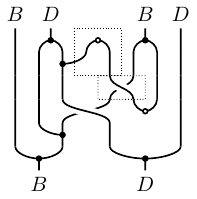}{6204855sp}{6277580sp}{0sp}{6277580sp}}
\expandafter\def\csname ArxivFigureData@0323\endcsname{\ArxivImageBox{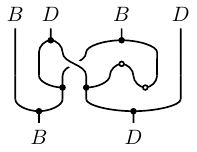}{6204481sp}{4781348sp}{0sp}{4781348sp}}
\expandafter\def\csname ArxivFigureData@0324\endcsname{\ArxivImageBox{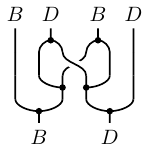}{4712511sp}{4781126sp}{0sp}{4781126sp}}
\expandafter\def\csname ArxivFigureData@0325\endcsname{\ArxivImageBox{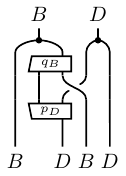}{3966896sp}{5529307sp}{0sp}{5529307sp}}
\expandafter\def\csname ArxivFigureData@0326\endcsname{\ArxivImageBox{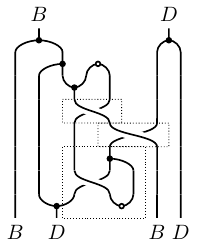}{6205356sp}{7780031sp}{0sp}{7780031sp}}
\expandafter\def\csname ArxivFigureData@0327\endcsname{\ArxivImageBox{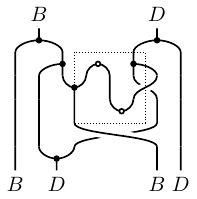}{6205054sp}{6278784sp}{0sp}{6278784sp}}
\expandafter\def\csname ArxivFigureData@0328\endcsname{\ArxivImageBox{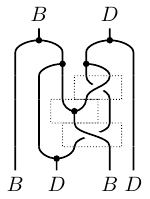}{4712882sp}{6277911sp}{0sp}{6277911sp}}
\expandafter\def\csname ArxivFigureData@0329\endcsname{\ArxivImageBox{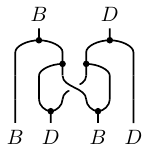}{4712386sp}{4781648sp}{0sp}{4781648sp}}
\expandafter\def\csname ArxivFigureData@0330\endcsname{\ArxivImageBox{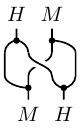}{2536696sp}{4053489sp}{0sp}{4053489sp}}
\expandafter\def\csname ArxivFigureData@0331\endcsname{\ArxivImageBox{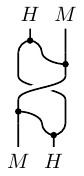}{2645788sp}{5546579sp}{0sp}{5546579sp}}
\expandafter\def\csname ArxivFigureData@0332\endcsname{\ArxivImageBox{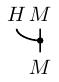}{1845128sp}{2560955sp}{0sp}{2560955sp}}
\expandafter\def\csname ArxivFigureData@0333\endcsname{\ArxivImageBox{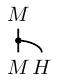}{1845127sp}{2560955sp}{0sp}{2560955sp}}
\expandafter\def\csname ArxivFigureData@0334\endcsname{\ArxivImageBox{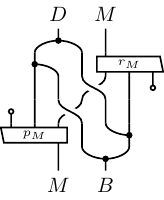}{5177598sp}{6295998sp}{0sp}{6295998sp}}
\expandafter\def\csname ArxivFigureData@0335\endcsname{\ArxivImageBox{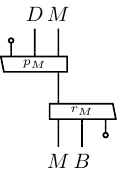}{3685160sp}{5546389sp}{0sp}{5546389sp}}
\expandafter\def\csname ArxivFigureData@0336\endcsname{\ArxivImageBox{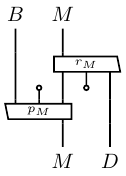}{3976220sp}{5547629sp}{0sp}{5547629sp}}
\expandafter\def\csname ArxivFigureData@0337\endcsname{\ArxivImageBox{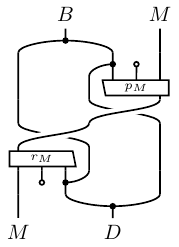}{5629979sp}{7791923sp}{0sp}{7791923sp}}
\expandafter\def\csname ArxivFigureData@0338\endcsname{\ArxivImageBox{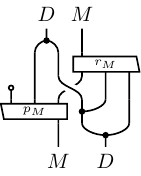}{4431612sp}{5547895sp}{0sp}{5547895sp}}
\expandafter\def\csname ArxivFigureData@0339\endcsname{\ArxivImageBox{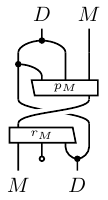}{3391773sp}{6293839sp}{0sp}{6293839sp}}
\expandafter\def\csname ArxivFigureData@0340\endcsname{\ArxivImageBox{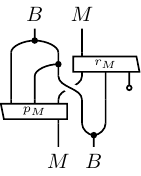}{4431462sp}{5547797sp}{0sp}{5547797sp}}
\expandafter\def\csname ArxivFigureData@0341\endcsname{\ArxivImageBox{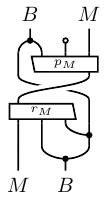}{3391923sp}{6294016sp}{0sp}{6294016sp}}
\expandafter\def\csname ArxivFigureData@0342\endcsname{\ArxivImageBox{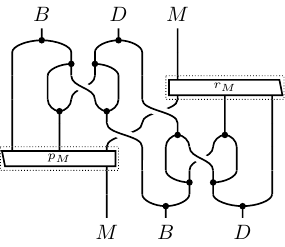}{8979843sp}{7780150sp}{0sp}{7780150sp}}
\expandafter\def\csname ArxivFigureData@0343\endcsname{\ArxivImageBox{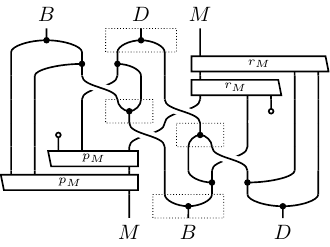}{10400715sp}{7783168sp}{0sp}{7783168sp}}
\expandafter\def\csname ArxivFigureData@0344\endcsname{\ArxivImageBox{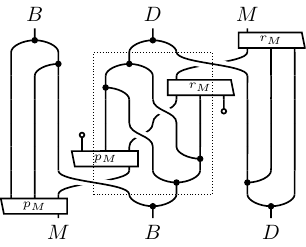}{9653964sp}{7783514sp}{0sp}{7783514sp}}
\expandafter\def\csname ArxivFigureData@0345\endcsname{\ArxivImageBox{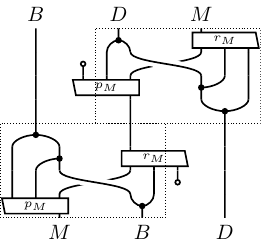}{8233068sp}{7776300sp}{0sp}{7776300sp}}
\expandafter\def\csname ArxivFigureData@0346\endcsname{\ArxivImageBox{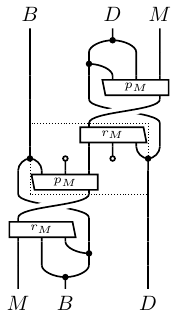}{5620431sp}{10021118sp}{0sp}{10021118sp}}
\expandafter\def\csname ArxivFigureData@0347\endcsname{\ArxivImageBox{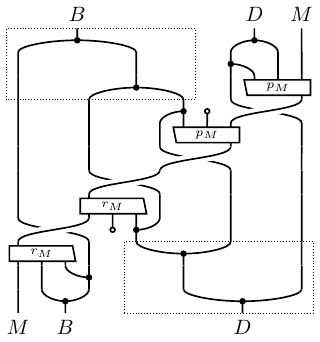}{10096910sp}{10782863sp}{0sp}{10782863sp}}
\expandafter\def\csname ArxivFigureData@0348\endcsname{\ArxivImageBox{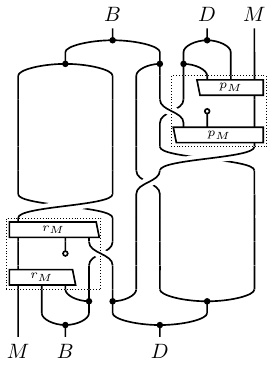}{8604639sp}{11534048sp}{0sp}{11534048sp}}
\expandafter\def\csname ArxivFigureData@0349\endcsname{\ArxivImageBox{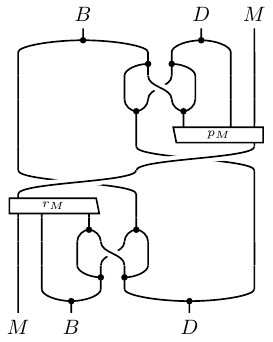}{8604453sp}{10778721sp}{0sp}{10778721sp}}
\expandafter\def\csname ArxivFigureData@0350\endcsname{\ArxivImageBox{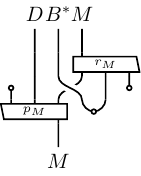}{4431612sp}{5550297sp}{0sp}{5550297sp}}
\expandafter\def\csname ArxivFigureData@0351\endcsname{\ArxivImageBox{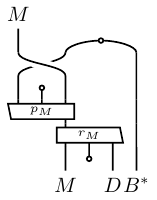}{4946669sp}{6298908sp}{0sp}{6298908sp}}
\expandafter\def\csname ArxivFigureData@0352\endcsname{\ArxivImageBox{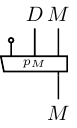}{2414889sp}{4033641sp}{0sp}{4033641sp}}
\expandafter\def\csname ArxivFigureData@0353\endcsname{\ArxivImageBox{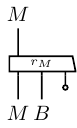}{2414557sp}{4033741sp}{0sp}{4033741sp}}
\expandafter\def\csname ArxivFigureData@0354\endcsname{\ArxivImageBox{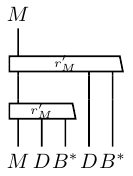}{4190887sp}{5532269sp}{0sp}{5532269sp}}
\expandafter\def\csname ArxivFigureData@0355\endcsname{\ArxivImageBox{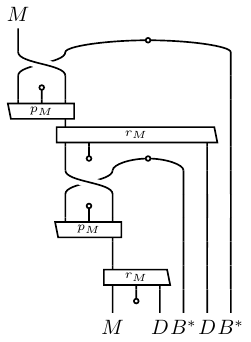}{7923432sp}{10784322sp}{0sp}{10784322sp}}
\expandafter\def\csname ArxivFigureData@0356\endcsname{\ArxivImageBox{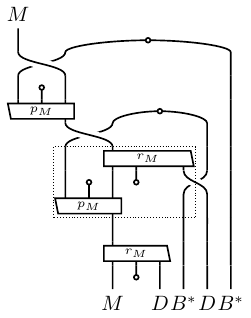}{7923432sp}{10032784sp}{0sp}{10032784sp}}
\expandafter\def\csname ArxivFigureData@0357\endcsname{\ArxivImageBox{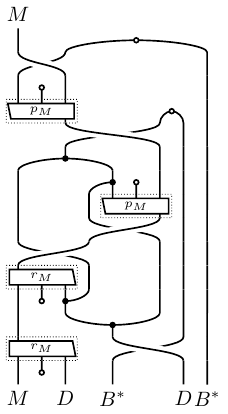}{7176078sp}{13042682sp}{0sp}{13042682sp}}
\expandafter\def\csname ArxivFigureData@0358\endcsname{\ArxivImageBox{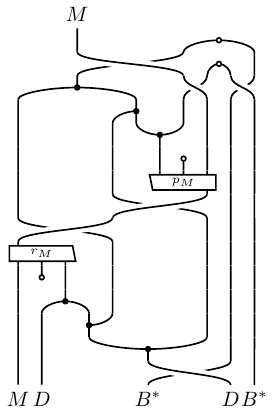}{8666442sp}{13050665sp}{0sp}{13050665sp}}
\expandafter\def\csname ArxivFigureData@0359\endcsname{\ArxivImageBox{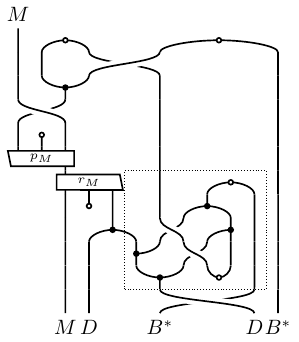}{9413029sp}{10790189sp}{0sp}{10790189sp}}
\expandafter\def\csname ArxivFigureData@0360\endcsname{\ArxivImageBox{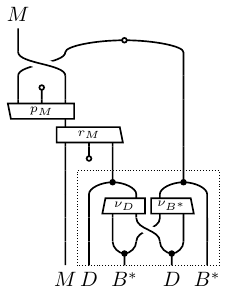}{7175248sp}{9279493sp}{0sp}{9279493sp}}
\expandafter\def\csname ArxivFigureData@0361\endcsname{\ArxivImageBox{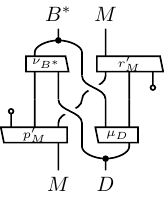}{5177626sp}{6279786sp}{0sp}{6279786sp}}
\expandafter\def\csname ArxivFigureData@0362\endcsname{\ArxivImageBox{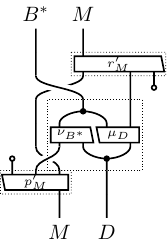}{5248079sp}{7774714sp}{0sp}{7774714sp}}
\expandafter\def\csname ArxivFigureData@0363\endcsname{\ArxivImageBox{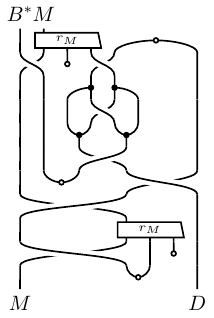}{6728329sp}{10026360sp}{0sp}{10026360sp}}
\expandafter\def\csname ArxivFigureData@0364\endcsname{\ArxivImageBox{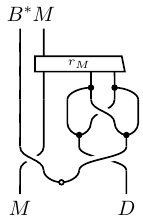}{4489986sp}{7027577sp}{0sp}{7027577sp}}
\expandafter\def\csname ArxivFigureData@0365\endcsname{\ArxivImageBox{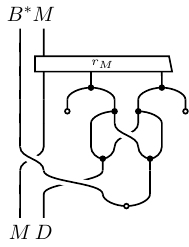}{5958130sp}{7776332sp}{0sp}{7776332sp}}
\expandafter\def\csname ArxivFigureData@0366\endcsname{\ArxivImageBox{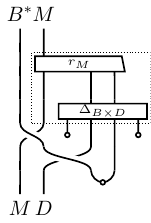}{4750030sp}{7027010sp}{0sp}{7027010sp}}
\expandafter\def\csname ArxivFigureData@0367\endcsname{\ArxivImageBox{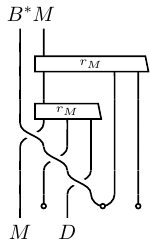}{4714404sp}{7778239sp}{0sp}{7778239sp}}
\expandafter\def\csname ArxivFigureData@0368\endcsname{\ArxivImageBox{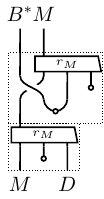}{3257962sp}{6277603sp}{0sp}{6277603sp}}
\expandafter\def\csname ArxivFigureData@0369\endcsname{\ArxivImageBox{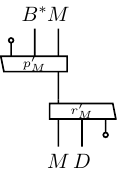}{3685160sp}{5530425sp}{0sp}{5530425sp}}
\expandafter\def\csname ArxivFigureData@0370\endcsname{\ArxivImageBox{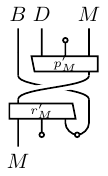}{3382437sp}{5526475sp}{0sp}{5526475sp}}
\expandafter\def\csname ArxivFigureData@0371\endcsname{\ArxivImageBox{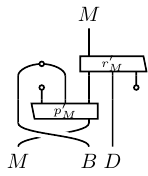}{4652499sp}{5527997sp}{0sp}{5527997sp}}
\expandafter\def\csname ArxivFigureData@0372\endcsname{\ArxivImageBox{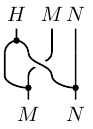}{2909861sp}{4053489sp}{0sp}{4053489sp}}
\expandafter\def\csname ArxivFigureData@0373\endcsname{\ArxivImageBox{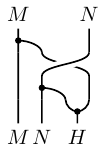}{3337222sp}{4800466sp}{0sp}{4800466sp}}
\expandafter\def\csname ArxivFigureData@0374\endcsname{\ArxivImageBox{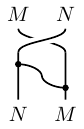}{2645772sp}{4053391sp}{0sp}{4053391sp}}
\expandafter\def\csname ArxivFigureData@0375\endcsname{\ArxivImageBox{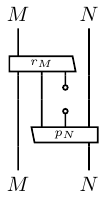}{3327551sp}{6275277sp}{0sp}{6275277sp}}
\expandafter\def\csname ArxivFigureData@0376\endcsname{\ArxivImageBox{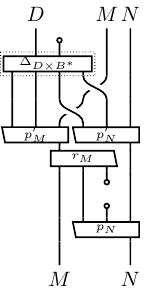}{4634452sp}{9270409sp}{0sp}{9270409sp}}
\expandafter\def\csname ArxivFigureData@0377\endcsname{\ArxivImageBox{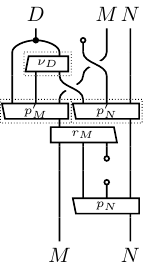}{4634204sp}{8521107sp}{0sp}{8521107sp}}
\expandafter\def\csname ArxivFigureData@0378\endcsname{\ArxivImageBox{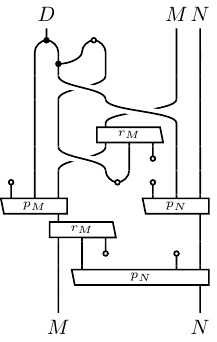}{6837485sp}{10775190sp}{0sp}{10775190sp}}
\expandafter\def\csname ArxivFigureData@0379\endcsname{\ArxivImageBox{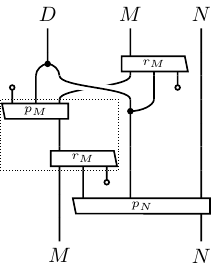}{6872720sp}{8523300sp}{0sp}{8523300sp}}
\expandafter\def\csname ArxivFigureData@0380\endcsname{\ArxivImageBox{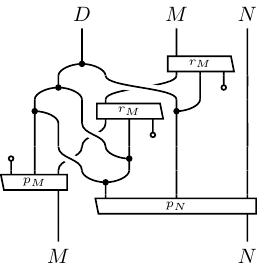}{8329929sp}{8525520sp}{0sp}{8525520sp}}
\expandafter\def\csname ArxivFigureData@0381\endcsname{\ArxivImageBox{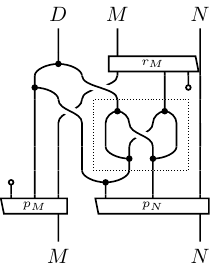}{6837342sp}{8526170sp}{0sp}{8526170sp}}
\expandafter\def\csname ArxivFigureData@0382\endcsname{\ArxivImageBox{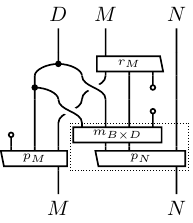}{6091271sp}{7024922sp}{0sp}{7024922sp}}
\expandafter\def\csname ArxivFigureData@0383\endcsname{\ArxivImageBox{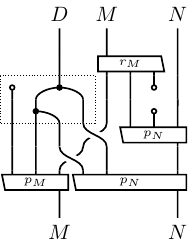}{6126647sp}{7774484sp}{0sp}{7774484sp}}
\expandafter\def\csname ArxivFigureData@0384\endcsname{\ArxivImageBox{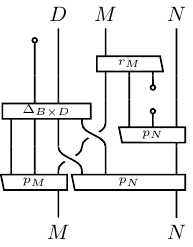}{6091345sp}{7774137sp}{0sp}{7774137sp}}
\expandafter\def\csname ArxivFigureData@0385\endcsname{\ArxivImageBox{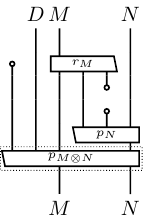}{4634052sp}{7024277sp}{0sp}{7024277sp}}
\expandafter\def\csname ArxivFigureData@0386\endcsname{\ArxivImageBox{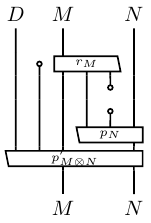}{4746728sp}{7024277sp}{0sp}{7024277sp}}
\expandafter\def\csname ArxivFigureData@0387\endcsname{\ArxivImageBox{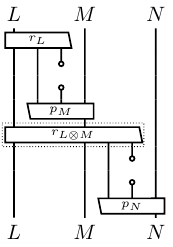}{5435417sp}{7775232sp}{0sp}{7775232sp}}
\expandafter\def\csname ArxivFigureData@0388\endcsname{\ArxivImageBox{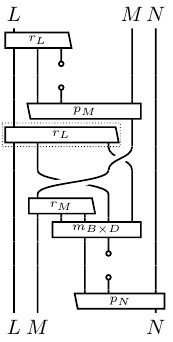}{5435617sp}{10777996sp}{0sp}{10777996sp}}
\expandafter\def\csname ArxivFigureData@0389\endcsname{\ArxivImageBox{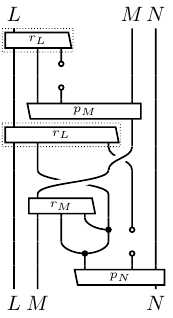}{5435617sp}{10026171sp}{0sp}{10026171sp}}
\expandafter\def\csname ArxivFigureData@0390\endcsname{\ArxivImageBox{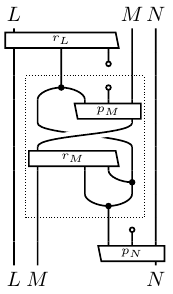}{5435467sp}{9275773sp}{0sp}{9275773sp}}
\expandafter\def\csname ArxivFigureData@0391\endcsname{\ArxivImageBox{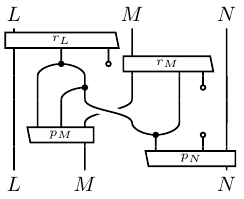}{7673722sp}{6279187sp}{0sp}{6279187sp}}
\expandafter\def\csname ArxivFigureData@0392\endcsname{\ArxivImageBox{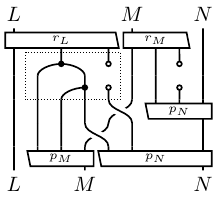}{6927139sp}{6279136sp}{0sp}{6279136sp}}
\expandafter\def\csname ArxivFigureData@0393\endcsname{\ArxivImageBox{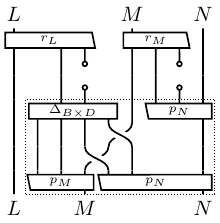}{6927339sp}{7028145sp}{0sp}{7028145sp}}
\expandafter\def\csname ArxivFigureData@0394\endcsname{\ArxivImageBox{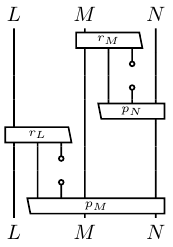}{5435115sp}{7774783sp}{0sp}{7774783sp}}
\expandafter\def\csname ArxivFigureData@0395\endcsname{\ArxivImageBox{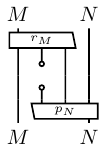}{3327455sp}{4781054sp}{0sp}{4781054sp}}
\expandafter\def\csname ArxivFigureData@0396\endcsname{\ArxivImageBox{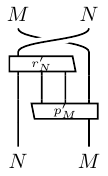}{3382066sp}{5527773sp}{0sp}{5527773sp}}
\expandafter\def\csname ArxivFigureData@0397\endcsname{\ArxivImageBox{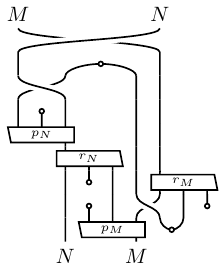}{6892193sp}{8524493sp}{0sp}{8524493sp}}
\expandafter\def\csname ArxivFigureData@0398\endcsname{\ArxivImageBox{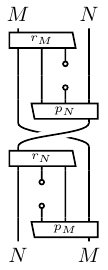}{3382004sp}{8519305sp}{0sp}{8519305sp}}
\expandafter\def\csname ArxivFigureData@0399\endcsname{\ArxivImageBox{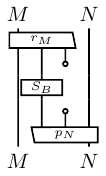}{3327455sp}{5528407sp}{0sp}{5528407sp}}

%% file: main.bbl
\begin{thebibliography}{99}
\setlength{\itemsep}{0em}
\bibitem[BLS15]{BLS15} A. Barvels, S. Lentner, C. Schweigert, \textit{Partially dualized Hopf algebras have equivalent Yetter-Drinfel'd modules}. J. Algebra 430 (2015), 303-342.

\bibitem[Bes95]{Bes95} Y.N. Bespalov, \textit{Crossed modules, quantum braided groups and ribbon structures}. Teoret. Mat. Fiz. 103 (1995) 368-387.

\bibitem[Bes97]{Bes97} Y.N. Bespalov, \textit{Crossed modules and quantum groups in braided categories}. Appl. Categ. Structures 5 (1997), no. 2, 155-204.

\bibitem[BD98]{BD98} Y. Bespalov, B. Drabant, \textit{Hopf (bi-)modules and crossed modules in braided monoidal categories}. J. Pure Appl. Algebra 123 (1998), no. 1-3, 105-129.

\bibitem[BD99]{BD99} Y. Bespalov, B. Drabant, \textit{Cross product bialgebras. I}. J. Algebra 219 (1999), no. 2, 466-505.

\bibitem[BD01]{BD01} Y. Bespalov, B. Drabant, \textit{Cross product bialgebras. II}. J. Algebra 240 (2001), no. 2, 445-504.


\bibitem[BCT13]{BCT13} D. Bulacu, S. Caenepeel, B. Torrecillas, \textit{On cross product Hopf algebras}. J. Algebra 377 (2013), 1-48.

\bibitem[BT14]{BT14} D. Bulacu, B. Torrecillas, \textit{On Doi-Hopf modules and Yetter-Drinfeld modules in symmetric monoidal categories}. Bull. Belg. Math. Soc. Simon Stevin 21 (2014), no. 1, 89-115.

\bibitem[CIMZ00]{CIMZ00} S. Caenepeel, B. Ion, G. Militaru, S. Zhu, \textit{The factorization problem and the smash biproduct of algebras and coalgebras}. Algebr. Represent. Theory 3 (2000), no. 1, 19-42.

\bibitem[CMZ97]{CMZ97} S. Caenepeel,  G. Militaru, S. Zhu, \textit{Crossed modules and Doi-Hopf modules}. Israel J. Math. 100 (1997), 221-247.


\bibitem[Doi92]{Doi92} Y. Doi \textit{Unifying Hopf modules}. J. Algebra 153 (1992), no. 2, 373-385.

\bibitem[EGNO15]{EGNO15} P. Etingof, S. Gelaki, D. Nikshych, V. Ostrik, \textit{Tensor Categories}. Mathematical Surveys and Monographs, 205. American Mathematical Society, Providence, RI, 2015. xvi+343 pp.
\bibitem[HKL26]{HKL26} J.-W. He, X. Kong, K. Li, \textit{The quantum double of Hopf algebras realized via partial dualization and the tensor category of its representations}. J. Pure Appl. Algebra 230 (2026), no. 11, Paper No. 108402.

\bibitem[HS13]{HS13} I. Heckenberger, H.-J. Schneider, \textit{Yetter-Drinfeld modules over bosonizations of dually paired Hopf algebras}. Adv. Math. 244 (2013), 354-394.
\bibitem[HS20]{HS20} I. Heckenberger, H.-J. Schneider, \textit{Hopf algebras and root systems}. Mathematical Surveys and Monographs, 247. American Mathematical Society, Providence, RI, 2020. xix+582 pp.

\bibitem[HS69]{HS69} R.G. Heyneman, M.E. Sweedler, \textit{Affine Hopf algebras. I}. J. Algebra 13 (1969), 192-241.
\bibitem[Kas95]{Kas95} C. Kassel, \textit{Quantum groups}. Graduate Texts in Mathematics, 155. Springer-Verlag, New York, 1995. xii+531 pp.

\bibitem[Li23]{Li23} K. Li, \textit{Partially dualized quasi-Hopf algebras reconstructed from dual tensor categories to finite-dimensional Hopf algebras}. preprint, arXiv:2309.04886.

\bibitem[Maj93]{Maj93} S. Majid, \textit{Braided groups}. J. Pure Appl. Algebra 86 (1993), no. 2, 187-221.

\bibitem[Mol77]{Mol77} R.K. Molnar, \textit{Semi-direct products of Hopf algebras}. J. Algebra 47 (1977), no. 1, 29-51.
\bibitem[Rad85]{Rad85} D.E. Radford, \textit{The structure of Hopf algebras with a projection}. J. Algebra 92 (1985), no. 2, 322-347.

\bibitem[Tak80]{Tak80} M. Takeuchi,
\textit{$\mathrm{Ext}_\mathrm{ad}(\mathrm{Sp}R,\mu^A)\simeq \hat{\mathrm{Br}}(A/k)$}. J. Algebra 67 (1980), no. 2, 436-475.

\bibitem[Tak81]{Tak81} M. Takeuchi, \textit{Matched pairs of groups and bismash products of Hopf algebras}. Comm. Algebra 9 (1981), no. 8, 841-882.

\bibitem[VV94]{VV94} A. Van Daele, S. Van Keer, \textit{The Yang-Baxter and pentagon equation}. Compositio Math. 91 (1994), no. 2, 201-221.

\bibitem[Yet90]{Yet90} D.N. Yetter, \textit{Quantum groups and representations of monoidal categories}. Math. Proc. Cambridge Philos. Soc. 108 (1990), no. 2, 261-290.

\end{thebibliography}
